\documentclass[hidelinks]{siamart250106}
\usepackage{fullpage}

\usepackage{hyperref}
\hypersetup{
    colorlinks,
    citecolor=green,
    filecolor=black,
    linkcolor=blue,
    urlcolor=blue
}
\hypersetup{linktocpage}
\usepackage[final]{showlabels}

\usepackage{mathtools}
\usepackage{cite}
\usepackage{graphicx}
 
\usepackage{cleveref}
\usepackage{amsbsy}
\usepackage{latexsym}
\usepackage{amsfonts}
\usepackage{amssymb}
\usepackage{upgreek}
\usepackage{amsmath}
\usepackage{enumerate}
\usepackage{subcaption}
\usepackage{stmaryrd}

\usepackage{amssymb,amsmath,graphicx,amscd,thmtools,tikz}
\usepackage{pgfplots}
\pgfplotsset{compat = 1.18}

\usetikzlibrary{shapes,snakes}
\usetikzlibrary{cd}
\usepackage{csvsimple}
\usepackage{siunitx,array,booktabs}
\usepackage{dsfont}

\newcommand{\He}{\mathsf H}
\newcommand{\T}{\mathsf T}

\providecommand{\norm}[1]{\lVert#1\rVert}

\DeclareMathOperator{\diag}{Diag}

\DeclareMathOperator{\rank}{rank}

\DeclareMathOperator{\sign}{sign}

\DeclareMathOperator{\conv}{conv}

\newcommand{\abs}[1]{{\left\lvert #1 \right\rvert}}

\DeclareMathOperator{\R}{\mathbb{R}}
\DeclareMathOperator{\C}{\mathbb{C}}
\DeclareMathOperator{\N}{\mathbb{N}}

\let\S\relax\DeclareMathOperator{\S}{\mathbb{S}}

\newcommand{\lem}[1]{{Lemma\,#1}}

\newcommand{\pr}[1]{{Proposition\,#1}}
\newcommand{\theo}[1]{{Theorem\,#1}}
\newcommand{\sect}[1]{{Section\,#1}}
\newcommand{\ch}[1]{{Chapter\,#1}}
\newcommand{\coro}[1]{{Corollary\,#1}}
\newcommand{\examp}[1]{{Example\,#1}}

\numberwithin{equation}{section}

\theoremstyle{plain}

\usepackage{algorithmicx}
\usepackage{algpseudocode}

\usepackage{todonotes}

\title{A Newton method for solving locally definite multiparameter eigenvalue problems by multiindex}
\author{Henrik Eisenmann \thanks{
    Institut f\"ur Geometrie und Praktische Mathematik, RWTH Aachen University, Templergraben 55, 52062 Aachen, Germany
(\email{eisenmann@igpm.rwth-aachen.de})
\funding{The work of H.E.~was funded by Deutsche Forschungs\-gemeinschaft – project number 501389786.}}
}

\date{\today}

\begin{document}

\maketitle

\begin{abstract}
    We present a new approach to compute eigenvalues and eigenvectors of locally definite multiparameter eigenvalue problems by its signed multiindex. The method has the interpretation of a semismooth Newton method applied to certain functions that have a unique zero. We can therefore show local quadratic convergence, and for certain extreme eigenvalues even global linear convergence of the method. Local definiteness is a weaker condition than right and left definiteness, which is often considered for multiparameter eigenvalue problems. These conditions are naturally satisfied for multiparameter Sturm-Liouville problems that arise when separation of variables can be applied to multidimensional boundary eigenvalue problems.
\end{abstract}

\begin{keywords}
    Multiparameter eigenvalue problem, ellipsoidal wave equation, Newton method

\end{keywords}

\begin{MSCcodes}
    65F15, 15A18, 15A69
\end{MSCcodes}

\section{Introduction}

In this work, we are interested 
in the \emph{multiparameter eigenvalue problem}~(MEP), which is an algebraic system of equations consisting of eigenvalue problems. Let  $A_{k\ell}\in \C^{n_k\times n_k}$ be square matrices. A solution of the multiparameter eigenvalue problem is given by $u_k\in\mathcal U_{k}\subset \C^{n_k}$, where $
\mathcal U_k$ consists of $n_k$ dimensional unit vectors, and $(\lambda_1,\ldots, \lambda_m)\in\C^m$ that satisfy
\begin{equation}\label{eq:MEP}
\setlength\arraycolsep{1pt}
\begin{array}{cccccccccc}
    \bigl(A_{10}& +&\lambda_1 A_{11}
    &+&\ldots&+& \lambda_m A_{1m}\bigr)&u_1&=&0\\
    \bigl(A_{20}& +&\lambda_1 A_{21}
    &+&\ldots&+& \lambda_m A_{2m}\bigr)&u_2&=&0\\
    \vdots &&\vdots&&&&\vdots&\vdots&&\vdots\\
    \bigl(A_{m0}& +&\lambda_1 A_{m1}
    &+&\ldots&+& \lambda_m A_{mm}\bigr)&u_m&=&0
\end{array} 
\end{equation}
or for $(\lambda_0,\ldots, \lambda_m)\neq 0$ in the homogeneous version
\begin{equation}\label{eq:HomMEP}
\setlength\arraycolsep{1pt}
\begin{array}{cccccccccc}
    \bigl(\lambda_0A_{10}& +&\lambda_1 A_{11}
    &+&\ldots&+& \lambda_m A_{1m}\bigr)&u_1&=&0\\
    \bigl(\lambda_0A_{20}& +&\lambda_1 A_{21}
    &+&\ldots&+& \lambda_m A_{2m}\bigr)&u_2&=&0\\
    \vdots &&\vdots&&&&\vdots&\vdots&&\vdots\\
    \bigl(\lambda_0A_{m0}& +&\lambda_1 A_{m1}
    &+&\ldots&+& \lambda_m A_{mm}\bigr)&u_m&=&0,
\end{array} 
\end{equation} 
We call $u_1\otimes \ldots\otimes u_m$ an eigenvector corresponding to the eigenvalue  $\lambda=(\lambda_1,\ldots,\lambda_m)$ in case of~\eqref{eq:MEP} and the eigenvalue  $\lambda=(\lambda_0,\ldots,\lambda_m)$ in case of~\eqref{eq:HomMEP}. The multiparameter eigenvalue problem combines linear systems of equations and eigenvalue problems. In the case $m=1$, equation~\eqref{eq:MEP} is a generalized eigenvalue problem, and if $n_k=1$ for $k=1,\ldots,m$, then~\eqref{eq:MEP} reduces to a linear system of equations.

The solutions to the MEP can be obtained using multilinear algebra techniques; see e.g.,~\cite[\theo{6.8.1}]{Atkinson1972}. The essence of the idea is to apply a multilinear version of Cramer's rule for linear systems. 
To apply Cramer's rule, we define the matrix 
\[
\tilde W(v_1,\ldots,v_m,u_1,\ldots,u_m)=[\tilde w_{k\ell}(u_k,v_k)]_{k=1,\ldots, m}^{\ell=0,\ldots,m}=\left[v_k^\He A^{}_{k\ell}u_k^{}\right]_{k=1,\ldots, m}^{\ell=0,\ldots,m}
\]
for $u_k,v_k\in\C^{n_k}$. Now assuming that $u_1\otimes \ldots\otimes u_m$ is an eigenvector corresponding to $\lambda=(\lambda_0,\ldots,\lambda_m)$, then the eigenvalue~$\lambda$ satisfies the linear equation
\[
\tilde W(v_1,\ldots,v_m,u_1,\ldots,u_m) \lambda =0
\]
for any $v_k\in\C^{n_k}$.
Now assume there is $\mu=(\mu_0,\ldots,\mu_m)$ such that for all  $u_k\in\mathcal U_{k}$ the matrix
\[
\begin{pmatrix}
\mu^\T\\
\tilde W(v_1,\ldots,v_m,u_1,\ldots,u_m)
\end{pmatrix}
\]
is invertible for some $v_k\in \C^{n_k}$.
We define 
linear operators $\Delta,\Delta_0,\dots,\Delta_m\colon\bigotimes_{k=1}^m \C^{n_k}\to\bigotimes_{k=1}^m \C^{n_k}$ by
\begin{equation}\label{eq:deltaMatrix}
    (v_1\otimes \ldots \otimes v_m)^\He \Delta (u_1\otimes  \ldots\otimes u_m)
    =
    \det
    \begin{pmatrix}
    \mu^\T\\
    \tilde W(v_1,\ldots,v_m,u_1,\ldots,u_m)
    \end{pmatrix}
\end{equation}
using linearity for each $u_k, v_k\in \C^{n_k}$, and in a similar way
\[
    (v_1\otimes \ldots \otimes v_m)^\He \Delta_\ell (u_1\otimes \ldots \otimes u_m)
    =
    \det
    \begin{pmatrix}
    e_\ell\\
    \tilde W(v_1,\ldots,v_m,u_1,\ldots,u_m)
    \end{pmatrix},
\]
where $e_\ell$ is the coordinate vector with $1$ at its $\ell+1$-th entry.
Then by Cramer's rule, the eigenpair $(\lambda,u_1\otimes \ldots \otimes u_m)$ satisfies
\begin{equation}\label{eq:MEPDelta}
    \mu^\T \lambda \;\Delta_\ell(u_1\otimes \ldots \otimes u_m)=\lambda_\ell\, \Delta (u_1\otimes \ldots \otimes u_m).
\end{equation}
Hence, all eigenpairs of the MEP can be obtained as the solutions to the simultaneous eigenvalue problems~\eqref{eq:MEPDelta}. In~\cite[\ch 6]{Atkinson1972} the converse is also shown.
In addition, the $m+1$~linear operators $\Delta^{-1}\Delta_\ell$ commute, and thus they share the same eigenvectors $u_1\otimes \ldots \otimes u_m$, which are rank-one tensors.

Multiparameter eigenvalue problems have undergone extensive investigation;
see e.g.,~\cite{Atkinson1972,Volkmer88,Faierman1991,AtkinsonMingarelli2011,Sleeman1978} for books considering this problem class.
These problems naturally emerge in mathematical physics where variables allow separation, but the resulting spectral parameters do not.
Various applications, such as delay differential equations~\cite{JH2009}
and optimization problems~\cite{SNTI2016}, lead to formulations resembling~\eqref{eq:MEP} or~\eqref{eq:HomMEP}.
Another context involves the decomposition of the domain in a boundary eigenvalue problem~\cite[\sect 5.2]{RJ2021}. 
Additionally, it is noteworthy that certain nonlinear eigenvalue problems can be represented using~\eqref{eq:MEP}, as elucidated in~\cite{RJ2021}.

There are various approaches to solve~\eqref{eq:MEP}, with many leveraging the linear eigenvalue problem~\eqref{eq:MEPDelta}. 
One option is using the generalized Schur decomposition. This was initially considered in~\cite{ST1986},
for the case that all matrices are Hermitian and $\Delta_0$ is positive definite and for the more general case, that $\Delta_0$ is invertible in~\cite{HKP2004}. 
These strategies perform well when $\prod_{k=1}^m n_k$ is not excessively large, as they have a complexity of order~$\mathcal O(\prod_{k=1}^m n_k^3)$. This is only feasible for small $n_k$ and $m$, as otherwise $\prod_{k=1}^m n_k$ becomes too large to feasibly solve eigenvalue problems. 
For $m=2$ and larger $n_k$, subspace methods come into play to find a selection of eigenvalues. In~\cite{HP2002} and~\cite{HKP2004} a Jacobi-Davidson type method for the two-parameter case was proposed and in~\cite{MP2015} an Arnoldi type method was considered. 
In the case of $m = 3$, various subspace methods were introduced in~\cite{HMMP2019}. 
An alternative approach involves homotopy continuation, as discussed in~\cite{Plestenjak2000, Plestenjak2001,DYY2016} and~\cite{RodriguezDuYouLimFiber2021}.
Continuation methods aim to find all eigenvalues.

In this work, we present a novel approach for addressing multiparameter eigenvalue problems through a Newton-type method based on the eigenvalue's multiindex, as explicitly defined in \Cref{sec:definiteMEPs}.
The multiindex of an eigenvalue is a concept usually used in the context of multiparameter Sturm-Liouville problems. It is however also a very useful notion for locally definite multiparameter eigenvalue. The convergence analysis of our resulting method is carried out for locally definite multiparameter eigenvalue with finite dimensional Hermitian matrices.
The essential requirement of local definiteness is elaborated in the subsequent section.
The local definiteness requirement is stronger than the assumption that the linear operator~$\Delta$ in~\eqref{eq:deltaMatrix} is invertible for some $\mu$; see e.g.,~\cite[\theo{10.4.1}]{Atkinson1972}.
Local definiteness criteria are met within a specific class of boundary eigenvalue problems, formally introduced in \Cref{sec:Sturm-Liouville}.
We develop a special Newton method tailored to locally definite multiparameter eigenvalues problems  in \Cref{sec:NewtonMEP} and discuss a perspective coming from optimization problems in \Cref{sec:OptMEP}.
Notably, these methods extend their applicability to multiparameter eigenvalue problems with large values of $n_k$ and $m$ when targeting only a subset of eigenvalues associated with certain multiindices. The computational complexity per eigenvalue remains at $\mathcal O(\sum_{k=1}^m{n_k^3}+ m^3)$.
This expression is dominated by $\mathcal O(\sum_{k=1}^m{n_k^3})$ if $m$ is small compared to the $n_k$. If additionally $n_k\eqsim n$, we get costs of the order $\mathcal O(m{n^3})$ which is linear in $m$.
The performance of this method is shown in numerical experiments, as elaborated in \Cref{sec:numericalMEP}. It is important to note that the foundational concepts discussed herein were primarily expounded in the thesis~\cite[\ch 3]{eisenmann}. This work extends the results of the thesis by highlighting how to apply the method constructed in \Cref{sec:NewtonMEP} to a discretization of multiparameter Sturm-Liouville problems, even if the discretization involves non-Hermitian matrices as explained in \Cref{sec: non Hermitian}. We also made  more thorough numerical experiments, exploring the influence of different parameters on the performance of the method in \Cref{sec:numericalMEP}.
The resulting method exhibits a close relation to the alternating method proposed in~\cite{eisenmann2021solving}, and this connection is further discussed in~\cite[\ch 3]{eisenmann}.

The utilization of Newton's method for solving multiparameter eigenvalue problems (MEPs) is not a novel concept and has been previously proposed, as exemplified in~\cite{Bohte1982}. The approach in~\cite{Bohte1982} involves seeking the joint zeros of functions $f_k(\lambda)=\det(\sum_{\ell=0}^m \lambda_\ell A_{k\ell})$ for $k=1,\ldots,m$. However, this method, particularly for general MEPs, proves to be sensitive to initialization, necessitating a precise starting guess.
To overcome this sensitivity issue, we adopt a different strategy by applying Newton's method to functions that have exactly one zero and therefore making it less sensitive to initialization in the sense that a different initialization do not lead to different solutions if the method does converge. Our proposed methods share common ideas with approaches designed for solving multiparameter Sturm-Liouville eigenvalue problems, wherein the objective is to identify eigenfunctions possessing a specific number of internal zeros. This concept is discussed in works such as~\cite{Levitina1994,Levititna1999}. 
These methods make use of the Prüfer transformation; see e.g.,~\cite[\ch 5.3]{AtkinsonMingarelli2011} and~\cite{BindingBrwon1984}. Here, the linear second order boundary eigenvalue problem is transformed into a nonlinear first order ordinary differential equation for the oscillation. The final value then describes the kind of boundary condition and the number of oscillations of the solution. This is combined with bisecting a rectangle containing the sought eigenvalue in~\cite{Levitina1994,Levititna1999}.
A bisection approach for two-parameter Sturm-Liouville problems was also used in~\cite{Ji_1992_twoparabisection} for a finite difference discretization. Bisection methods however only converge at a linear rate.
The notion of internal zeros in eigenfunctions of Sturm-Liouville eigenvalue problems represents a specialized case of the multiindex of an eigenvalue, a concept that will be explained in more detail in \Cref{sec:Sturm-Liouville}.

\section{Definite multiparameter eigenvalue problems}\label{sec:definiteMEPs}

    In this section, our objective is to explore a generalization of the well-established principle from linear algebra, asserting that any Hermitian matrix $A=A^\He$ exclusively possesses real eigenvalues. To achieve this, we draw upon relevant findings from~\cite[\ch 1]{Volkmer88}. Throughout our discussion, we maintain the assumption that all matrices $A_{k\ell}$ in~\eqref{eq:MEP} and~\eqref{eq:HomMEP} are Hermitian. Now assume that $\lambda=(\lambda_0,\ldots,\lambda_m)$ is real. Consequently,
    \[
        \sum_{\ell=0}^m \lambda_\ell A_{k\ell}  \]
   are Hermitian matrices for $k={1,\ldots,m}$
   and all their respective eigenvalues are real. This observation forms the basis for the following definition.

    \begin{definition}{\upshape\cite[\ch 1.2]{Volkmer88}}\label{def:index}
        Let $\lambda\in \R^{m+1}$ be an eigenvalue of~\eqref{eq:HomMEP}. The multiindex of $\lambda$ is the $m$-tuple $\mathbf i=(i_1,\ldots,i_m)$ such that $0$ is the $i_k$-th largest eigenvalue of $\sum_{\ell=0}^m \lambda_\ell A_{k\ell} $.
    \end{definition}

    In the case $m=1$, a sufficient but not necessary criterion that all eigenvalues have a real representation, is that there is a linear combination $\mu_0A_{10}+\mu_1A_{11}$ that is positive definite. For MEPs, we examine the matrix 
    \begin{equation}
        W(u_1,\ldots,u_m)
        =
        \begin{pmatrix}
            u_1^\He A^{}_{10}u_1^{} & \ldots  & u_1^\He A^{}_{1m}u_1^{} \\
            \vdots &  & \vdots                         \\
            u_m^\He A^{}_{m0}u_m^{} & \ldots  & u_m^\He A^{}_{mm}   u_m^{}
        \end{pmatrix}.
    \end{equation}

    \begin{definition}{\upshape\cite[\ch 1.1]{Volkmer88}}
        The MEP~\eqref{eq:HomMEP} is called locally definite if  $\rank W(u_1,\ldots,u_m)=m$ for all $u_k\in\mathcal U_k$, $k=1,\ldots,m$.
    \end{definition}
    Consider $u_1\otimes\ldots\otimes u_m$ as an eigenvector corresponding to the eigenvalue $\lambda$. In this case, $\lambda$ is in the nullspace of $W(u_1,\ldots,u_m)$. When the multiparameter eigenvalue problem (MEP) is locally definite, this nullspace is one-dimensional. Furthermore, since $W(u_1,\ldots,u_m)$ is a real matrix, the eigenvalue can be selected as real as well.
    Through scaling, we have the flexibility to restrict the search for eigenvalues to the unit sphere $\S^m=\{\lambda\in \R^{m+1}\colon \norm{\lambda}_2=1\}$.

    By the above consideration, we may further restrict to the set
    \begin{equation}\label{eq:MEPfeasibleset}
        \mathcal{P}=\{\lambda\in\S^m\colon W(u)\lambda=0 \text{ for some $u\in \mathcal U_1\times\ldots\times \mathcal U_m$}\}.
    \end{equation}
    This set obviously has the symmetry $\mathcal P=-\mathcal P$. In fact, it is the disjoint union of two connected sets $\mathcal P^+$ and $\mathcal P^-=-\mathcal P^+$; see e.g.,~\cite[\lem 1.2.1]{Volkmer88}. Here, $\mathcal P^+$ is the set
    \[
        \mathcal{P}^+=\left\{\lambda\in\S^m\colon W(u)\lambda=0 \text{ and }
        \det
        \begin{pmatrix}
            \lambda^\T\\
            W(u)
        \end{pmatrix}>0
        \text{ for some $u\in \mathcal U_1\times\ldots\times \mathcal U_m$}  \right\}.
    \]

    Utilizing the outlined methodology, we can comprehensively characterize all eigenvalues of a locally definite MEP~\eqref{eq:HomMEP}.

    \begin{theorem}{\upshape\cite[\theo{1.4.1}]{Volkmer88}}\label{thm:signedindex}
        Let the MEP~\eqref{eq:HomMEP} be locally definite. Then for every multiindex $\mathbf i\in \{1,\ldots,n_1\}\times \ldots\times \{1,\ldots,n_m\}$ and every sign $\sigma\in\{+,-\}$ there is a unique eigenvalue $\lambda\in \mathcal P^\sigma$ of multiindex $\mathbf i$. We say $\lambda$ is of signed index~$(\mathbf i,\sigma)$. 
    \end{theorem}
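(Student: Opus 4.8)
The plan is to derive the bijection from two ingredients: a convenient parametrization of $\mathcal P^+$, and a combinatorial picture of where the value $0$ sits in the spectra of the Hermitian matrices $B_k(\lambda):=\sum_{\ell=0}^m\lambda_\ell A_{k\ell}$ as $\lambda$ ranges over $\mathcal P^+$. For the setup, local definiteness makes $\ker W(u)$ one‑dimensional for every $u=(u_1,\dots,u_m)\in\mathcal U_1\times\dots\times\mathcal U_m$, so normalizing its generator by the positivity condition defining $\mathcal P^+$ yields a map $u\mapsto\lambda(u)$ that is continuous (indeed real‑analytic, since $\rank W(u)\equiv m$) and surjects onto $\mathcal P^+$; as $\mathcal U_1\times\dots\times\mathcal U_m$ is compact, $\mathcal P^+$ is a compact connected subset of $\S^m$. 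The first genuine step is the observation that $\lambda\in\mathcal P^+$ is an eigenvalue of~\eqref{eq:HomMEP} \emph{if and only if} $0$ is an eigenvalue of $B_k(\lambda)$ for every $k=1,\dots,m$: for the nontrivial direction one picks $u_k\in\ker B_k(\lambda)$, checks that $u_1\otimes\dots\otimes u_m$ is then an eigentensor, and notes that $W(u)\lambda=0$ places $\lambda$ in $\mathcal P$, on the $\mathcal P^+$ side by the sign normalization. Consequently the multiindex of \Cref{def:index} is defined exactly on the eigenvalues contained in $\mathcal P^+$, and the theorem asks us to show that $\mathbf i\mapsto(\text{eigenvalue of index }\mathbf i\text{ in }\mathcal P^+)$ is everywhere defined and single valued.

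For this I would upgrade connectedness of $\mathcal P^+$ to a structural statement: $\mathcal P^+$ is an $m$-cell, and the map $\lambda\mapsto\bigl(p_1(\lambda),\dots,p_m(\lambda)\bigr)$, with $p_k(\lambda)$ the number of strictly positive eigenvalues of $B_k(\lambda)$, induces on $\mathcal P^+$ a regular cell subdivision: the open top cells are the components on which $(p_k)$ is constant (equivalently the $\lambda$ with $0\notin\bigcup_k\operatorname{spec}B_k(\lambda)$), crossing a codimension‑one face increments exactly one coordinate $p_k$ by one, and the $0$-dimensional cells are precisely the MEP eigenvalues. Granting this, a point of $\mathcal P^+$ is of multiindex $\mathbf i=(i_1,\dots,i_m)$ exactly when $0\in\operatorname{spec}B_k$ there and $p_k\in\{i_k-1,i_k\}$ on the surrounding top cells for each $k$, i.e.\ it is the unique corner where $m$ faces meet, one raising each $p_k$ from $i_k-1$ to $i_k$; for every $\mathbf i\in\{1,\dots,n_1\}\times\dots\times\{1,\dots,n_m\}$ there is exactly one such corner (the extreme cases $i_k\in\{1,n_k\}$ sitting on $\partial\mathcal P^+$, where $B_k(\lambda)$ is semidefinite), which yields existence and uniqueness at once. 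The main obstacle is precisely this structural claim: that $\mathcal P^+$ is a cell and that the inertia map behaves like a cell decomposition — in particular that two coordinates $p_k$ cannot jump simultaneously except at an MEP eigenvalue, and that multiple eigenvalues of a single $B_k$ do not spoil the picture — is where local definiteness, rather than mere invertibility of $\Delta$ in~\eqref{eq:deltaMatrix}, is essential. I would attack it by induction on $m$, using the implicit function theorem along the simple‑eigenvalue branches of the $B_k$ and a perturbation/limiting argument to handle spectral coincidences.

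Finally, the sign $\sigma=-$ comes for free from the symmetry $\mathcal P^-=-\mathcal P^+$: since $B_k(-\lambda)=-B_k(\lambda)$, the $j$-th largest eigenvalue of $B_k(-\lambda)$ is minus the $(n_k-j+1)$-th largest of $B_k(\lambda)$, so $\lambda\mapsto-\lambda$ carries the eigenvalue of index $\mathbf i=(i_1,\dots,i_m)$ in $\mathcal P^+$ to the eigenvalue of index $(n_1-i_1+1,\dots,n_m-i_m+1)$ in $\mathcal P^-$; because $\mathbf i\mapsto(n_k-i_k+1)_k$ is an involution of the index set, the bijection transfers verbatim to $\mathcal P^-$.
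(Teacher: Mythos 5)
The paper does not prove this statement at all: it is imported verbatim from Volkmer \cite[\theo{1.4.1}]{Volkmer88}, so there is no in-paper argument to compare yours against. Judged on its own, your outline has the right supporting scaffolding --- the parametrization $u\mapsto\lambda(u)$ of $\mathcal P^+$, the equivalence ``$\lambda\in\mathcal P^+$ is an eigenvalue iff $0\in\operatorname{spec}B_k(\lambda)$ for all $k$'', and the transfer to $\mathcal P^-$ via $\lambda\mapsto-\lambda$ together with the involution $\mathbf i\mapsto(n_k-i_k+1)_k$ are all correct and cleanly stated --- but it is not a proof, because the one step that carries the entire content of the theorem is asserted rather than established.

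Concretely, the gap is your ``structural claim'': that $\mathcal P^+$ is an $m$-cell, that the inertia map $\lambda\mapsto(p_1(\lambda),\dots,p_m(\lambda))$ induces a regular cell subdivision whose codimension-one faces each increment exactly one $p_k$ by one, and that for every $\mathbf i$ there is \emph{exactly one} corner where the $m$ walls $\{\varepsilon_{k,i_k}=0\}$ meet. Even granting the local picture (transversal, one-at-a-time crossings), the global count does not follow: $m$ hypersurfaces in an $m$-dimensional set generically intersect in a $0$-dimensional set, but nothing in your argument rules out zero or several intersection points for a given $\mathbf i$. That global ``exactly one'' is precisely the theorem, and it requires extracting a quantitative consequence of local definiteness --- e.g.\ via \Cref{lm:localdefinite}, a uniform monotonicity of each $\varepsilon_{k,i_k}$ along suitable directions in $\S^m$, or equivalently a degree/homotopy argument for the map $\tilde F_{\mathbf i}$ of~\eqref{eq:Newtonfunctionhom} showing it has a unique, nondegenerate zero in $\mathcal P^+$. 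Deferring this to ``induction on $m$ plus the implicit function theorem plus a perturbation argument'' leaves the heart of the proof unwritten; in particular you never use local definiteness beyond $\rank W(u)=m$, yet you correctly observe that mere invertibility of $\Delta$ would not suffice, so some essential input is missing. A secondary loose end: when $0$ is a multiple eigenvalue of some $B_k(\lambda)$, a single point of $\mathcal P^+$ carries several multiindices and $p_k$ jumps by more than one across a single wall; your ``one face per unit increment of $p_k$'' bookkeeping does not accommodate this, and the remark that multiplicities ``do not spoil the picture'' needs an actual argument.
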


    Additionally, it proves beneficial to explore stronger definiteness assumptions, which, in practical applications, are frequently met.
    \begin{definition}
        The MEP~\eqref{eq:HomMEP} is called definite with respect to $\mu\in\S^m$ if
        \[
            \det 
            \begin{pmatrix}
                \mu^\T\\
                W(u_1,\ldots,u_m)
            \end{pmatrix}>0
        \]
        for all $u_k\in\mathcal U_k$, $k=1,\ldots,m$.
    \end{definition}

    This definition is equivalent to the condition that $\mathcal P$ in~\eqref{eq:MEPfeasibleset} is the disjoint union of the connected sets
    \[
        \mathcal{P}^+=\{\lambda\in\mathcal P\colon \mu^\T\lambda>0 \}
    \]
    and $\mathcal P^-=-\mathcal P^+$; see the discussion in~\cite[\ch{1.5}]{Volkmer88}. It also follows that the linear operator $\Delta$ defined in~\eqref{eq:deltaMatrix} is positive definite; see e.g.,~\cite[\theo 4.4.1]{Volkmer88}. 
    For the 
    more standard
    inhomogeneous MEP~\eqref{eq:MEP}, definiteness with respect to $\mu=(1,0,\ldots,0)$ is naturally a favorable condition. In this case, \Cref{thm:signedindex} implies, that every eigenvalue of signed index $(\mathbf i,+)$ can be scaled to $\lambda=(1,\lambda_1,\ldots,\lambda_m)$. If the MEP is definite with respect to $(1,0,\ldots,0)$, it is called \emph{right definite}.  When dealing with a right definite MEP of the form~\eqref{eq:MEP}, we denote
    \[
        \mathcal Q = \left\{\lambda\in\R^m\colon W(u)
        \begin{pmatrix}
            1\\
            \lambda
        \end{pmatrix}
        =0 \text{ for some $u\in \mathcal U_1\times\ldots\times \mathcal U_m$}  \right\}
    \]
    as an analog of $\mathcal P$. The sets $\mathcal P^{\sigma}$ and $\mathcal Q$ serve as analogies to the numerical range of a linear operator.

    Another frequently employed assumption is referred to as \emph{left definiteness}. The MEP is left definite with respect to $\mu=(0,\mu_1,\ldots, \mu_m)$ if the matrices $A_{k0}$ are negative definite and 
    \begin{equation}\label{eq:leftdeficond}
        \det
        \begin{pmatrix}
            u_1^\He A^{}_{11} u_1^{}&
            \ldots &
            u_m^\He A^{}_{1m} u^{}_m \\
            \vdots &  &\vdots\\
            u_{k-1}^\He A^{}_{{k-1},1} u^{}_{k-1}&
            \ldots &
            u_{k-1}^\He A^{}_{{k-1},m} u^{}_{k-1} \\
            \mu_1 & \ldots  & \mu_m\\
            u_{k+1}^\He A^{}_{{k+1},1} u^{}_{k+1}&
            \ldots &
            u_{k+1}^\He A^{}_{{k+1},m}u^{}_{k+1}\\
            \vdots & &\vdots\\
            u_{m}^\He A^{}_{{m}1} u^{}_m&
            \ldots &
            u_{m}^\He A^{}_{mm}u^{}_m
        \end{pmatrix}
        >
        0
    \end{equation}
    for $k=1,\ldots,m$. Left definiteness implies definiteness with respect to $\mu$.

   A number of problems arising from applications exhibit either right or left definiteness or a combination of both, for example when the MEP arises by separation of variables as described in \Cref{sec:Sturm-Liouville}. Some interesting cases can be found in~\cite[\ch 3.7]{Volkmer88}  and\cite{Plestenjak2015,ALSW_whispering_2014}.
   To conclude this section, we aim to present a useful equivalent condition for local definiteness.

    \begin{lemma}{\upshape\cite[\theo{1.4.3}]{Volkmer88}}\label{lm:localdefinite}
        The MEP~\eqref{eq:HomMEP} is locally definite if and only if for every $(\sigma_1,\ldots,\sigma_m ) \in \{-1,1\}^m $ there is an
        $(\alpha_0,\ldots,\alpha_m) \in \R^{m+1}$ such that
        $\sum_{\ell=0}^m \sigma^{}_k u_k^\He A^{}_{k\ell}u^{}_k \alpha^{}_\ell> 0$ for all $u_k \in \mathcal U_k, k = 1,\ldots,m$.
    \end{lemma}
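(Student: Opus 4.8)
The plan is to recast both directions in terms of the joint numerical ranges
\[
  C_k := \bigl\{(u^\He A^{}_{k0}u,\ldots,u^\He A^{}_{km}u) : u\in\mathcal U_k\bigr\}\subset\R^{m+1},\qquad k=1,\ldots,m,
\]
each of which is compact and connected, being a continuous image of the sphere $\mathcal U_k$. Writing $w_k(u_k)$ for the $k$-th row of $W(u_1,\ldots,u_m)$, so $w_k(u_k)\in C_k$, I would first observe that for a fixed sign vector $\sigma\in\{-1,1\}^m$ the existence of an $\alpha=(\alpha_0,\ldots,\alpha_m)$ with $\sigma_k\langle w_k(u_k),\alpha\rangle>0$ for all $u_k$ and all $k$ is, by the separating hyperplane theorem applied to the compact set $\bigcup_k\sigma_kC_k$, equivalent to $0\notin\conv(\bigcup_k\sigma_kC_k)$. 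Since $\conv(\bigcup_kX_k)=\{\sum_k\lambda_ky_k:\lambda_k\ge0,\sum_k\lambda_k=1,y_k\in\conv X_k\}$, letting $\sigma$ run over $\{-1,1\}^m$ turns the negation of the right-hand side of the lemma into: there are $\beta\in\R^m\setminus\{0\}$ and $z_k\in\conv C_k$ with $\sum_k\beta_kz_k=0$. On the other side, local definiteness fails exactly when $\rank W(u)<m$ for some $u$, i.e.\ when there are $\beta\in\R^m\setminus\{0\}$ and $c_k\in C_k$ with $\sum_k\beta_kc_k=0$. Hence the lemma is equivalent to the statement: a nontrivial linear relation among points $c_k\in C_k$ exists if and only if one exists among points of the convex hulls $\conv C_k$.

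The implication ``right-hand side $\Rightarrow$ locally definite'' is then short and direct, and does not even need the numerical-range language: if $\sum_k\beta_kw_k(u_k^\ast)=0$ with $\beta\ne0$, apply the hypothesis to $\sigma_k:=\sign\beta_k$ (and $\sigma_k:=1$ where $\beta_k=0$) to obtain $\alpha$ with $\sigma_k\langle w_k(u_k^\ast),\alpha\rangle>0$ whenever $\beta_k\ne0$, and pair $\alpha$ with the relation to get $0=\sum_k|\beta_k|\sigma_k\langle w_k(u_k^\ast),\alpha\rangle>0$, a contradiction. The content is the reverse implication: given $\sum_k\beta_kz_k=0$ with $\beta\ne0$ and $z_k\in\conv C_k$, one must produce a relation using genuine numerical-range points $c_k\in C_k$ (for a possibly different $\beta$, which suffices).

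For that, I would argue by induction on $\sum_kn_k$, using the Hermitian structure of the $C_k$ in the form $\conv C_k=\{(\operatorname{tr}(\rho A_{k0}),\ldots,\operatorname{tr}(\rho A_{km})):\rho\succeq0,\operatorname{tr}\rho=1\}$. Among all nontrivial convex-hull relations, pick one minimizing $\sum_k\rank\rho_k$ over density-operator representatives $z_k=(\operatorname{tr}(\rho_kA_{k\ell}))_\ell$. If every $z_k$ with $\beta_k\ne0$ is an extreme point of $\conv C_k$, then by Milman's theorem each such $z_k$ lies in $C_k$, giving a genuine relation. Otherwise some $z_{k_0}$ with $\beta_{k_0}\ne0$ is non-extreme. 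Taking a supporting hyperplane $H=\{x:\langle\alpha,x\rangle=\gamma\}$ of $\conv C_{k_0}$ through $z_{k_0}$, the Hermitian matrix $B:=\sum_\ell\alpha_\ell A_{k_0\ell}$ satisfies $B-\gamma I\succeq0$, the face $\conv C_{k_0}\cap H$ equals $\conv(C_{k_0}\cap H)$, and $C_{k_0}\cap H$ is precisely the joint numerical range of the compressions of the $A_{k_0\ell}$ to $\ker(B-\gamma I)$, with $z_{k_0}$ in its convex hull; replacing factor $k_0$ by this compressed tuple yields a smaller MEP (since $\ker(B-\gamma I)$ is a proper, nonzero subspace of $\C^{n_{k_0}}$) to which the induction hypothesis applies, and a genuine relation for it is one for the original MEP because $C_{k_0}\cap H\subseteq C_{k_0}$. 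The exceptional configurations---$z_{k_0}$ in the interior of $\conv C_{k_0}$, where one instead pairs $k_0$ with any other factor using $\operatorname{cone}C_{k_0}=\R^{m+1}$ (this is where $m\ge2$ enters; for $m=1$ each $C_k$ is already convex by the Toeplitz--Hausdorff theorem and the issue is vacuous), and the degenerate case where $\sum_\ell\alpha_\ell A_{k_0\ell}$ is a scalar multiple of the identity, where $C_{k_0}$ lies in an affine hyperplane and the ambient dimension drops---are handled separately, the base case (all $n_k=1$) being immediate.

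The step I expect to be the genuine obstacle is exactly this reverse implication: passing from convex-combination (``mixed-state'') relations back to numerical-range (``pure-state'') relations, and keeping the bookkeeping of the case analysis under control. The difficulty is real rather than cosmetic, because the convex hull of the joint numerical range of three or more Hermitian matrices can strictly contain the range (the range may be a $2$-sphere while the hull is the ball), so compactness and connectedness of the $C_k$ alone do not suffice; what rescues the argument is the Hermitian structure, through the density-operator description of $\conv C_k$, Milman's theorem, and the identification of exposed faces of $\conv C_k$ with joint numerical ranges of compressions. Combining this reduction with the separation argument of the first paragraph yields the equivalence, which is~\cite[\theo{1.4.3}]{Volkmer88}.
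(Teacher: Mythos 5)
First, a point of reference: the paper does not prove this lemma at all --- it is imported verbatim from \cite[Theorem 1.4.3]{Volkmer88} --- so there is no in-paper proof to compare against, and your attempt has to stand on its own. Your first two paragraphs do stand: the translation of both sides into statements about the joint numerical ranges $C_k$ and their convex hulls is correct (the separation step is legitimate because $\bigcup_k \sigma_k C_k$ is compact, so $0\notin\conv(\bigcup_k\sigma_kC_k)$ is indeed equivalent to the existence of a strictly separating $\alpha$), and the short argument for ``sign condition $\Rightarrow$ locally definite'' is complete and correct. You have also correctly isolated the crux: passing from a relation $\sum_k\beta_kz_k=0$ with $z_k\in\conv C_k$ to one with genuine points $c_k\in C_k$, and you are right that connectedness and compactness alone cannot do this.

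The reverse implication, however, is where your proof has a genuine hole rather than a deferred technicality. Your induction only advances when the offending point $z_{k_0}$ lies on a \emph{proper} exposed face of $\conv C_{k_0}$; whenever $z_{k_0}$ is in the relative interior of $\conv C_{k_0}$ (which subsumes both of your ``exceptional configurations'': every supporting hyperplane then contains the affine hull of $C_{k_0}$ and the associated $\sum_\ell\alpha_\ell A_{k_0\ell}$ is a scalar, so the compression does not shrink anything), the face machinery gives no reduction, and this is exactly the case the induction funnels into. Your proposed escape, ``pair $k_0$ with another factor using $\operatorname{cone}C_{k_0}=\R^{m+1}$,'' does not work as stated: the cone is taken from the origin, while the hypothesis is that $z_{k_0}$ (not $0$) is an interior point of $\conv C_{k_0}$, and these are unrelated --- take $C_{k_0}$ a small sphere centered far from the origin, whose convex hull has plenty of interior points while $\operatorname{cone}C_{k_0}$ is a thin cone. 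Even in the special situation $z_{k_0}=0\in\operatorname{int}\conv C_{k_0}$, the claim that every ray from the origin meets $C_{k_0}$ is clear for ellipsoid surfaces ($n_{k_0}=2$) but is not justified for joint numerical ranges of $m+1\ge 4$ Hermitian matrices of larger size, where $\partial\conv C_{k_0}\not\subseteq C_{k_0}$ in general. Since every branch of your case analysis either closes via Milman (fine) or lands in this unhandled configuration, the hard direction is not established. The correct ingredient you are missing is a quantitative link between convex combinations and genuine range points that exploits the Hermitian structure across \emph{pairs} of vectors --- e.g.\ that for orthonormal $u,v$ the point $tw(u)+(1-t)w(v)$ is the center of an antipodally symmetric ellipse of genuine range points $w(\cos\theta\,u+e^{i\phi}\sin\theta\,v)$ --- which is the Toeplitz--Hausdorff-type mechanism on which Volkmer's own argument is built; without something of this kind the face/compression induction cannot be closed.
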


    \section{Multiparameter
    Sturm–Liouville
    problems}\label{sec:Sturm-Liouville}

    To provide further motivation, we consider the Helmholtz equation with Dirichlet boundary conditions:
    \begin{equation}\label{eq:helmholtz}
        \begin{aligned}
            \Delta u(x)  +\lambda\, u(x) &=0\quad &&\text{for $x\in\Omega$}\\
            u(x) &=0 \quad &&\text{for $x\in\partial \Omega$}
        \end{aligned}
    \end{equation}
    on a domain $\Omega$. 
    For our purpose, the situation is intriguing when the domain is the ellipse $\Omega=\{(x,y)\in \R^2\colon x^2/a^2+y^2/b^2<1 \}$ with $b>a>0$. In the elliptical coordinates
    \[
    x=c\cosh(\rho)\cos(\varphi),\quad y=c\sinh(\rho)\sin(\varphi),
    \]
    the equation separates into
    \begin{equation*}\label{eq:mathieu}
    \begin{array}{ccccccccl}
         P''(\rho)&+&\lambda\, \frac{c^2}{2} \cosh (2\rho)\, P(\rho)& -& \nu\, P(\rho)&=&0 &&\text{for $\rho\in(0,r)$}, \\
         \Phi''(\varphi)&-&\lambda\, \frac{c^2}{2} \cos (2\varphi)\, \Phi(\varphi) & +& \nu\; \Phi(\varphi) & = &0 &&\text{for  $\varphi\in(0,2\pi)$} ,
    \end{array}    
    \end{equation*}
    subject to suitable boundary conditions.  Here, $(0,c)$ and $(0,-c)$ are the focal points of the ellipses and the hyperbolas in the elliptical coordinates. The resulting equations correspond to \emph{Mathieu's modified differential equation} and \emph{Mathieu's differential equation}, with the spectral parameters lacking separability.
    
    In higher dimensions, if $B=\diag(1/(b_{m+1}-b_1),\ldots,1/(b_{m+1}- b_m))$ is a diagonal matrix
    with $m$ different positive values $0<b_{m+1}-b_m<\ldots<b_{m+1}-b_1$ and the domain is the ellipsoid $\Omega=\{x\in \R^m\colon x^\T B x <1\}$,  separation can be achieved using ellipsoidal coordinates $\xi_1,\ldots,\xi_m$ defined by
    \[
    \sum_{\ell=1}^m\frac{x_\ell^2}{\xi_k-b_\ell}=1\quad \text{for $\xi_k\in(b_k,b_{k+1})$} .
    \]
    This leads to the $m$ coupled differential equations
    \begin{equation}\label{eq:ellipsoiderMEP}
        (p(\xi^{}_k)\, u_k'(\xi^{}_k))' +\frac{(-1)^{m-k}}{4p(\xi_k)}\sum_{\ell=1}^m \lambda^{}_\ell\;  \xi_k^{\ell-1} u^{}_k(\xi^{}_k) = 0\quad \text{for $\xi_k\in(b_k,b_{k+1})$},
    \end{equation}
    where $p(\xi)=\sqrt{ \prod_{\ell=1}^m\abs{b_\ell-\xi} }$ and appropriate boundary conditions are imposed;
    see e.g.,~\cite[\ch 6.9]{Volkmer88}, \cite[\sect 1.2]{SW1979}, or~\cite[\ch 1.13]{Meixner1954}.
    
    These problems are manifestations of multiparameter Sturm-Liouville eigenvalue problems. They take the general form
    \begin{equation}\label{eq:SturmMEP}
        (p^{}_k(x^{}_k)\, u_k'(x^{}_k))' +q_k^{}(x_k^{})\,u_k^{}(x_k^{})+\sum_{\ell=1}^m \lambda^{}_\ell\;  a^{}_{k\ell}(x^{}_k) u^{}_k(x_k^{}) = 0\quad \text{for $x_k\in(a_k,b_k)$}
    \end{equation}
    with appropriate boundary conditions. Standard assumptions are $p_k(x_k)>0$ for $x_k\in [a_k,b_k]$ and $p_k$ is continuously differentiable. Although this is not the case for~\eqref{eq:ellipsoiderMEP}, the positivity assumption can often be relaxed; see, for instance,~\cite[\ch 5.3]{Teschl2012}.

    Similar to the finite dimensional problem~\eqref{eq:MEP}, real eigenvalues $\lambda=(\lambda_1,\ldots,\lambda_m)$ have a multiindex since the occurring differential and multiplication operators are self-adjoint with respect to the inner product on $L^2(a_k,b_k)$ and the multiplication operators are bounded from above. There are according existence results for eigenvalues of signed index; see e.g.,~\cite[\theo{2.5.3} and \theo{2.7.1}]{Volkmer88}. Notably, if the eigenvalue problem is right definite, then there is an eigenvalue with signed index~$(\mathbf i,+)$. 
    Note that the differential problems have infinitely many eigenvalues, and can have both continuous and discrete spectrum.
    Right and left definiteness of~\eqref{eq:SturmMEP} can be verified pointwise. It is right definite if
    \[
        \det\begin{pmatrix}
            a_{11}(x_1)& \ldots &a_{1m}(x_1)\\
            \vdots&& \vdots\\
            a_{m1}(x_m)& \ldots &a_{mm}(x_m)
        \end{pmatrix}
        >0
    \] 
    on a dense subset of $[a_1,b_1]\times\ldots\times[a_m,b_m]$, and the analogue of~\eqref{eq:leftdeficond} for left definiteness with respect to $\mu$ is satisfied if
    \[
        \det\begin{pmatrix}
            a_{11}(x_1)&
            \ldots &
            a_{1m}(x_1)\\
            \vdots &  &\vdots\\
            a_{k-1,1}(x_{k-1})&
            \ldots &
            a_{k-1,m}(x_{k-1}) \\
            \mu_1 & \ldots  & \mu_m\\
            a_{k+1,1}(x_{k+1})&
            \ldots &
            a_{k+1,m}(x_{k+1})\\
            \vdots & &\vdots\\
            a_{m1}(x_m)&
            \ldots &
            a_{mm}(x_m)
        \end{pmatrix}
        >
        0
    \]
    on a dense subset of $[a_1,b_1]\times\ldots\times[a_m,b_m]$ 
    for $k=1,\ldots,m$; see e.g.,~\cite[\theo{3.6.2}]{Volkmer88}.
    
    The multiindex of an eigenvalue also has an interpretation as the number of internal zeros of the corresponding eigenfunctions $u_k$ of~\eqref{eq:SturmMEP}. An eigenfunction $u$ of the Sturm-Liouville eigenvalue problem
    \[
        (p(x)\,u'(x))' +q(x)\,u(x)=\lambda\, u(x)\quad\text{for $x\in(a,b)$}
    \]
    has $n$ internal zeros if $\lambda$ is the $n+1$-th largest eigenvalue under the general boundary conditions
    \[
        \cos(\alpha)\,u(a)+\sin(\alpha)\,u'(a)=0
        \quad \text{and}
        \quad\cos(\beta)\,u(b)+\sin(\beta)\,u'(b)=0,
    \]
    that include Dirichlet and Neumann boundary conditions. Under periodic boundary conditions it has 
    $2n$ internal zeros if $\lambda$ is the $2n$-th or $2n\!+\!1$-th largest eigenvalue. With antiperiodic boundary conditions it has $2n-1$ internal zeros if $\lambda$ is the $2n\!-\!1$-th or the $2n$-th largest eigenvalue; see e.g,~\cite[\theo 5.17 and \theo 5.37]{Teschl2012} and~\cite[\ch 8, \theo 2.1 and \theo 3.1]{Coddington55}.
    In the context of~\eqref{eq:SturmMEP}, a similar analogy holds for the multiindex of an eigenvalue. If $\lambda=(\lambda_1,\ldots,\lambda_m)$ has multiindex $\mathbf i=(i_1,\ldots,i_m)$ in the sense of \Cref{def:index}, then the corresponding eigenfunction $u_k$ has $i_k-1$ or $i_k$ internal zeros, contingent on the specified boundary conditions; see e.g.,~\cite[\theo 3.5.1]{Volkmer88}.   
    
    The techniques developed in \Cref{sec:NewtonMEP} are designed to calculate eigenvalues based on their multiindex. In the context of a discretization of a multiparameter Sturm-Liouville eigenvalue problem~\eqref{eq:SturmMEP}, the previous discussion leads to the additional interpretation: identifying an eigenvalue for which the associated eigenfunctions exhibit the designated number of interior zeros. Indeed, the discrete solutions to the finite differences analogue of Sturm-Liouville eigenvalue problem also have a similar property. For example, let the sequence $(u_k)_{k=0}^K$ solve the finite difference equation 
    \[
        a_k u_{k+1} - b_k u_k +a_{k-1}u_{k-1} = \lambda u_k
    \]
    with positive coefficients $a_k$ and boundary conditions $u_0= 0 = u_K$. Then the sequence $(u_k)_{k=0}^K$ has $n-1$ sign changes if $\lambda$ is the $n$-th largest solution; see e.g.,~\cite{Teschl_oscillation_1996}. Hence, solutions to the finite difference discretizations retain the oscillation property of the actual solutions.
    The approximation rates for eigenvalues are $\mathcal O(h^2)$ for uniform grids where $h$ is the width. The constants however depend on the smoothness of eigenfunctions, which are problem dependent and can be arbitrarily bad. When solutions are smooth other discretizations lead to better approximation rates, for example spectral collocation as in~\cite{Plestenjak2015}. Solutions to such discretizations can have more oscillations than the index of the eigenvalue. 

    \section{A Newton-type method}\label{sec:NewtonMEP}
    
    In what follows, let $\|\cdot\|$ be the Euclidean $\ell^2$-norm for vectors and the induced matrix norm for matrices which in this case is the spectral norm. We want to note however, that many arguments are independent of the respective norm, for example the result of \Cref{prop:minimumproperty}.

    Motivated by \Cref{def:index} and \Cref{thm:signedindex} we now define functions for each possible multiindex that equate to zero exactly when evaluated at a corresponding eigenvalue.
    For each multiindex $\mathbf{i}\in \{1,\ldots,n_1\}\times\ldots\times\{1,\ldots,n_m\}$, we define a function $F_\mathbf{i}\colon \R^m\to\R^m$ as follows
    \begin{equation}\label{eq:Newtonfunction}
        \lambda=(\lambda_1,\ldots,\lambda_m)\mapsto F_\mathbf{i}(\lambda)= (\varepsilon_{1,i_1}(\lambda),\ldots,\varepsilon_{m,i_m}(\lambda))  ,
    \end{equation}
    where $\varepsilon_{k,i_k}(\lambda)$ is the $i_k$-th largest eigenvalue of the matrix $\sum_{\ell=0}^m \lambda_\ell A_{k\ell}$ with $\lambda_0=1$, that is, the $k$-th component of $F_\mathbf i$ is the $i_k$-th largest eigenvalue of the matrix appearing in the $k$-th line of~\eqref{eq:MEP}. Therefore,
    the function $F_\mathbf{i}$ evaluates to zero if and only if $\lambda$ is an eigenvalue of the MEP~\eqref{eq:MEP} with the corresponding multiindex.
    
    Similarly, we define another function $\tilde F_\mathbf{i}\colon \S^m\to\R^m$ for the homogeneous MEP~\eqref{eq:HomMEP}
    \begin{equation}\label{eq:Newtonfunctionhom}
        \lambda=(\lambda_0,\ldots,\lambda_m)\mapsto \tilde F_\mathbf{i}(\lambda)= (\varepsilon_{1,i_1}(\lambda),\ldots,\varepsilon_{m,i_m}(\lambda)).
    \end{equation}
    If the MEP is locally definite, \Cref{thm:signedindex} establishes that $\tilde F_\mathbf{i}$ has precisely two zeros for every multiindex, one of positive sign and one of negative sign. In the case of right definiteness, $F_\mathbf{i}$ has a unique zero.
    The new task is to find the unique zero of the function $F_\mathbf i$ in the inhomogeneous case, or the unique zero in $\mathcal P^+$ or $P^-$ of $\tilde F_\mathbf i$ in the homogeneous case.

    Subsequently, we use Newton's method for the functions $F_\mathbf{i}$. To proceed with this, we need the Jacobian of $F_\mathbf{i}$, if it exists. Otherwise, we require its generalized Jacobian; see e.g.,~\cite{Clarke_Book_Opt} and~\cite[\ch{1.4.1}]{ISNewtonbook2014} for an overview.
    Clarke's generalized Jacobian of a Lipschitz-continuous function $\Phi\colon \R^n\to\R^m $ is given by
    \begin{equation}\label{eq: Clarke Jacobian}
       \partial \Phi(x) = \conv \{J\in \R^{m\times n}\colon \text{there is a sequence }(x_k)_k\subset \mathcal S_\Phi \text{ s.t. }\lim_{k\to\infty} x_k = x\text{ and } \lim_{k\to\infty} \Phi'(x_k) = J\},
    \end{equation}
    where $\mathcal S_\Phi$ is the set of points where $\Phi$ is differentiable.
    Let us collect some required properties of the Jacobian of $F_\mathbf i$ for the application of Newton's method.
    For this, we require the following statement about open sets.
    \begin{lemma}\label{lem: openSetlemma}
        Let $(M,d)$ be a metric space, and $A, B\subset M$ be sets such that $A$ is open and $A\cup B$ is open. Then $A\cup B^\circ $ is dense in $A\cup B$, where $B^\circ$ denotes the interior of $B$.
    \end{lemma}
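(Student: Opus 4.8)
The plan is to prove that every point of $A\cup B$ is a limit of points in $A\cup B^\circ$. Since $A\subseteq A\cup B^\circ$ and $A$ is open, every point of $A$ is trivially such a limit, so the only work is at a point $x\in B\setminus A$. The key observation is that $A\cup B$ is open, so there is a radius $r>0$ with the open ball $B_r(x)\subseteq A\cup B$. I will argue that $x$ lies in the closure of $A\cup B^\circ$ by producing, for every $\varepsilon\in(0,r)$, a point of $A\cup B^\circ$ inside $B_\varepsilon(x)$.

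First I would dispose of the easy sub-case: if $B_\varepsilon(x)\cap A\neq\emptyset$ for all small $\varepsilon$, we are done immediately, since such points lie in $A\subseteq A\cup B^\circ$. So assume instead that some ball $B_\delta(x)$ (with $\delta\le r$) misses $A$ entirely, i.e. $B_\delta(x)\cap A=\emptyset$. Then $B_\delta(x)\subseteq (A\cup B)\setminus A\subseteq B$. But $B_\delta(x)$ is open, hence $B_\delta(x)\subseteq B^\circ$, and in particular $x\in B^\circ\subseteq A\cup B^\circ$. In either case $x$ is in the closure of $A\cup B^\circ$, which is what we needed. Combining the two cases, $\overline{A\cup B^\circ}\supseteq A\cup B$, i.e. $A\cup B^\circ$ is dense in $A\cup B$ (with respect to the subspace topology on $A\cup B$, or equivalently $A\cup B\subseteq\overline{A\cup B^\circ}$).

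I do not anticipate a genuine obstacle here; the statement is essentially a topological triviality once one splits on whether arbitrarily small balls around $x$ meet $A$. The only point requiring a little care is the logical structure of the dichotomy: for a fixed $x\in B$, either every neighborhood of $x$ meets $A$ (so $x\in\overline A\subseteq\overline{A\cup B^\circ}$), or some neighborhood $U$ of $x$ is disjoint from $A$, in which case $U\cap(A\cup B)=U\cap B$; intersecting with the open set $A\cup B$ if necessary, one gets an open neighborhood of $x$ contained in $B$, hence in $B^\circ$. One should also note that openness of $A\cup B$ is exactly what lets us shrink $U$ to stay inside $B$: without it, $U$ might only be contained in $B\cup(\text{something outside }A\cup B)$, and the argument would fail — so that hypothesis is used essentially. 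I would write the proof in four or five lines following this case split.
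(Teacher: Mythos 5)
Your proof is correct and follows essentially the same route as the paper's: fix $x\in B$, use openness of $A\cup B$ to get a ball contained in $A\cup B$, and then split on whether arbitrarily small balls around $x$ meet $A$ (giving $x\in\overline{A}$) or some ball misses $A$ entirely (giving that ball inside $B$, hence $x\in B^\circ$). The only difference is presentational — the paper phrases the first case via a sequence in $A$ converging to $x$ rather than via closures — so there is nothing substantive to add.
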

    \begin{proof}
       Let $x\in B$ and $\varepsilon>0$ be small enough such that $d(x,y)<\varepsilon$ implies $y\in A\cup B$. Now either $d(x,y)<\varepsilon$ implies $y\in B$ or there is $y\in A$ with $d(x,y)<\varepsilon$. In the first case $x\in B^\circ$. If the second case is true for all small enough $\varepsilon>0$, then there is a sequence $(x_k)_k\subset A$ with $\lim_{k\to \infty} x_k = x$. This proves the claim.
    \end{proof}
    \begin{proposition}
        The function $F_\mathbf i$ is differentiable at a point $\lambda$ if the eigenvalues $\varepsilon_{k,i_k}(\lambda)$ have constant multiplicity in a neighborhood of $\lambda$. Then its 
        Jacobian is given by 
        \begin{equation}\label{eq:differentialF}
            F'_{\mathbf i}(\lambda)=
            \begin{pmatrix}
            u_1^\He A^{}_{11} u^{}_1&\ldots&u_1^\He A^{}_{1m} u^{}_1\\
            \vdots&&\vdots\\
            u_m^\He A^{}_{m1} u^{}_m&\ldots&u_m^\He A^{}_{mm} u^{}_m
            \end{pmatrix},
        \end{equation}
        where the vectors $u_k\in\mathcal U_k$ are eigenvectors of $\sum_{\ell=0}^m \lambda_\ell A_{k\ell}$ corresponding to the eigenvalue $\varepsilon_{k,i_k}(\lambda)$. Otherwise, its generalized Jacobian is contained in the convex hull of
        \begin{equation}\label{eq: generalized Jacobian}
            \left\{\!\!\begin{pmatrix}
                u_1^\He A^{}_{11} u^{}_1&
                \!\!\!\ldots\!\!\!
                &u_1^\He A^{}_{1m} u^{}_1\\
                \vdots&&\vdots\\
                u_m^\He A^{}_{m1} u^{}_m&
                \!\!\!\ldots\!\!\!
                &u_m^\He A^{}_{mm} u^{}_m
                \end{pmatrix}
                \!
                \colon
                \text{$u_k\in\mathcal U_k$ is an eigenvector corresponding to $\varepsilon_{k,i_k}(\lambda)$}
                \!\right\}.
        \end{equation}
    \end{proposition}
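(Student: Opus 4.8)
The plan is to prove the two assertions in turn, the second reducing to the first through a density argument built on \Cref{lem: openSetlemma}.

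\textbf{Differentiability under constant multiplicity.} Fix $k$ and abbreviate $A_k(\lambda)=\sum_{\ell=0}^m\lambda_\ell A_{k\ell}$ with $\lambda_0=1$, an affine function of $\lambda\in\R^m$. If $\varepsilon_{k,i_k}(\lambda)$ has constant multiplicity $r$ on a neighborhood of $\lambda^0$, then at $\lambda^0$ the cluster of $r$ ordered eigenvalues equal to $\varepsilon_{k,i_k}(\lambda^0)$ is strictly separated from its neighbors, and by constancy of the multiplicity this separation persists throughout the neighborhood; hence the spectral projector $P_k(\lambda)=\frac{1}{2\pi\mathrm i}\oint(zI-A_k(\lambda))^{-1}\,\mathrm dz$ onto that eigenspace is real-analytic, in particular $C^1$, in $\lambda$, and so is $\varepsilon_{k,i_k}(\lambda)=\tfrac1r\operatorname{tr}(P_k(\lambda)A_k(\lambda))$. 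To identify the derivative, take any unit eigenvector $u_k$ of $A_k(\lambda^0)$ for $\varepsilon_{k,i_k}(\lambda^0)$, set $u_k(\lambda)=P_k(\lambda)u_k/\|P_k(\lambda)u_k\|$ (well defined, $C^1$, and a unit eigenvector near $\lambda^0$), differentiate $A_k(\lambda)u_k(\lambda)=\varepsilon_{k,i_k}(\lambda)u_k(\lambda)$ in direction $e_j$, and pair with $u_k(\lambda^0)=u_k$; Hermiticity of $A_k(\lambda^0)$ removes the term involving the derivative of $u_k(\lambda)$ and leaves $\partial_{\lambda_j}\varepsilon_{k,i_k}(\lambda^0)=u_k^\He A_{kj}u_k$. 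As the left-hand side is independent of the choice of $u_k$, so is the right-hand side, which in particular shows \eqref{eq:differentialF} is well defined; assembling over $k$ gives differentiability of $F_{\mathbf i}$ at $\lambda^0$ with Jacobian \eqref{eq:differentialF}.

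\textbf{A dense set of good points.} First note $F_{\mathbf i}$ is locally Lipschitz, since each $\varepsilon_{k,i_k}$ is $1$-Lipschitz in the matrix (Weyl's inequality) and $\lambda\mapsto A_k(\lambda)$ is affine, so \eqref{eq: Clarke Jacobian} applies. Let $V$ be the set of $\lambda$ at which every $\varepsilon_{k,i_k}$ has locally constant multiplicity; I claim $V$ is open and dense. Fixing $k$, each $\{\mathrm{mult}\ge s\}$ is closed (it is a finite union of loci of equalities among the continuous ordered eigenvalues), so $\{\mathrm{mult}=1\}$ is open, $\{\mathrm{mult}\le s\}$ is open, and $\{\mathrm{mult}=s\}\cap\operatorname{int}\{\mathrm{mult}\ge s\}=\operatorname{int}\{\mathrm{mult}=s\}$. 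Applying \Cref{lem: openSetlemma} repeatedly — first with $A=\{\mathrm{mult}=1\}$, $B=\{\mathrm{mult}\ge2\}$, then inside $\operatorname{int}\{\mathrm{mult}\ge2\}$ with $A=\{\mathrm{mult}=2\}$, $B=\{\mathrm{mult}\ge3\}$, and so on, peeling off one multiplicity level at a time until the bound $n_k$ is reached — shows that $\bigcup_{s\ge1}\operatorname{int}\{\mathrm{mult}=s\}$, which is precisely the locally-constant-multiplicity set for $\varepsilon_{k,i_k}$ and is open, is dense. Intersecting over $k$ keeps $V$ open and dense, and by the first part $F_{\mathbf i}$ is $C^1$ on $V$ with $F'_{\mathbf i}(\mu)$ of the form appearing in \eqref{eq: generalized Jacobian} (choose one eigenvector per block).

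\textbf{Passing to the limit.} Because the ordered eigenvalues of $A_k(\cdot)$ are semialgebraic, $\R^m\setminus V$ is a semialgebraic set with empty interior, hence Lebesgue-null; by the standard properties of Clarke's generalized Jacobian (cf.~\cite{Clarke_Book_Opt}) it may therefore be computed using only differentiability points in $V$. Thus any $J\in\partial F_{\mathbf i}(\lambda)$ is a finite convex combination of limits $\lim_p F'_{\mathbf i}(\mu_p)=\lim_p[u_k^{(p)\He}A_{kj}u_k^{(p)}]$ with $\mu_p\in V$, $\mu_p\to\lambda$, and unit eigenvectors $u_k^{(p)}$ of $A_k(\mu_p)$ for $\varepsilon_{k,i_k}(\mu_p)$. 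Passing to a subsequence along which each $u_k^{(p)}$ converges on the unit sphere to some $u_k$ and using continuity of $A_k(\cdot)$ and of $\varepsilon_{k,i_k}$, the identity $A_k(\mu_p)u_k^{(p)}=\varepsilon_{k,i_k}(\mu_p)u_k^{(p)}$ passes to the limit, so $u_k$ is a unit eigenvector of $A_k(\lambda)$ for $\varepsilon_{k,i_k}(\lambda)$ and $\lim_p F'_{\mathbf i}(\mu_p)=[u_k^\He A_{kj}u_k]$ lies in the set in \eqref{eq: generalized Jacobian}. That set is compact (a continuous image of a product of unit spheres of eigenspaces), so its convex hull is closed, and we conclude $\partial F_{\mathbf i}(\lambda)$ is contained in it.

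\textbf{Main obstacle.} The subtle point is the reduction to $V$: a priori $F_{\mathbf i}$ could be differentiable at points where some $\varepsilon_{k,i_k}$ has higher multiplicity, and there the formula $\nabla\varepsilon_{k,i_k}(\mu)=(u_k^\He A_{k1}u_k,\dots,u_k^\He A_{km}u_k)$ with a \emph{single} eigenvector $u_k$ need not hold; the argument sidesteps this by never evaluating $F'_{\mathbf i}$ there, which is exactly what \Cref{lem: openSetlemma} (density of $V$) and semialgebraicity (nullity of its complement) deliver. Alternatively one could analyze the directional derivatives of $\varepsilon_{k,i_k}$ directly — they equal extreme eigenvalues of the compression of the perturbation to the current eigenspace — or invoke the known characterization of Clarke's subdifferential of the $i$-th eigenvalue of a Hermitian matrix as $\operatorname{conv}\{uu^\He:u\text{ a unit eigenvector}\}$ together with the chain rule for the affine map $\lambda\mapsto(A_1(\lambda),\dots,A_m(\lambda))$.
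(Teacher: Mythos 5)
Your proof is correct and follows essentially the same route as the paper: differentiability on constant-multiplicity neighborhoods via analytic perturbation theory, density of such points via \Cref{lem: openSetlemma}, and a limiting argument at differentiability points using continuity and compactness of eigenvectors. The only substantive differences are cosmetic --- you construct the spectral projector by contour integral where the paper cites Kato, you work with $\partial F_{\mathbf i}$ directly instead of the componentwise product $\partial\varepsilon_{1,i_1}(\lambda)\times\cdots\times\partial\varepsilon_{m,i_m}(\lambda)$, and you justify the Lebesgue-nullity of the complement of the constant-multiplicity set by semialgebraicity, a point the paper's proof passes over when it identifies ``dense open'' with ``full measure''.
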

    
    \begin{proof}
        Let $A\colon I\subset \R\to \R^{n\times n}$ be a matrix valued function. Let $\lambda(t)$ be an eigenvalue of $A(t)$ smoothly depending on $t$, and $u(t)$, $w(t)$ be smooth corresponding right and left eigenvectors. Then 
        \[
        A'(t) u(t) +A(t) u'(t) = \frac{d}{dt}(A(t) u(t))  =
        \frac{d}{dt}(\lambda(t) u(t) ) = \lambda'(t) u(t) +\lambda(t) u'(t).
        \]
        Left multiplication by $w^\He(t)$ leads to 
        \[
            w^\He(t)A'(t) u(t) = \lambda'(t) w^\He(t) u(t).
        \]
        As a result, we have 
        \[
        \frac{\partial}{\partial \lambda_\ell} \varepsilon_{k,i_k}(\lambda) = u_k^\He A_{k\ell} u_k
        \]
        with $u_k$ being a normalized eigenvector corresponding to $\varepsilon_{k,i_k}(\lambda)$
        if the eigenvalue $\varepsilon_{k,i_k}(\lambda)$ has a partial differential in direction $\lambda_\ell$. Here, we also made use of the fact that left and right eigenvectors are the same for Hermitian matrices.

        Indeed, by~\cite[\ch 2, \theo 6.1]{Kato1976} eigenvalues and eigenvectors of a holomorphic family of Hermitian matrices are analytic when regarding them as an unordered set. Therefore, when multiplicity of an eigenvalue does not change in a neighborhood both the eigenvalue and its eigenspace is holomorphic. Hence, all the partial derivatives 
        $\frac{\partial}{\partial \lambda_\ell} \varepsilon_{k,i_k}(\lambda)$ exist and are continuous when the multiplicity of the eigenvalue is constant in a neighborhood of $\lambda$. Thus,~\eqref{eq:differentialF} follows.

    For the second claim, notice that the multiplicity of an eigenvalue is upper semicontinuous, that is, can only jump upwards. For $m \in \N_0$ we define the sets of constant multiplicity
    \[
        S^{m}_k = \{\lambda\colon \varepsilon_{k,i_k}(\lambda) \text{ has multiplicity $m$}\}.
    \]
    By upper semicontinuity, the union
    \[
        \bigcup_{m'\leq m }S^{m'}_k= \{\lambda\colon \varepsilon_{k,i_k}(\lambda) \text{ has multiplicity at most $m$}\}
    \]
    is open.
    Using \Cref{lem: openSetlemma} iteratively, we get that $\bigcup_{m'\in \N_0  }(S^{m'}_k)^\circ$ is a dense open set, hence by~\cite[\theo{2.5.1}]{Clarke_Book_Opt}
    \begin{equation*}
        \partial \varepsilon_{k,i_k}(\lambda) = \conv \{J\in \R^{1\times m}\colon \text{there is a seq. }(\lambda_j)_j\subset \bigcup_{m'\in \N_0 }(S^{m'}_k)^\circ  \text{ s.t. }\lim_{j\to\infty} \lambda_j = \lambda\text{ and } \lim_{j\to\infty} \varepsilon_{k,i_k}'(\lambda_j) = J\}
     \end{equation*}
    that is, the sequences in the definition~\eqref{eq: generalized Jacobian} for the generalized gradient can be restricted to a subset with full measure. It follows that
    \[
        \partial \varepsilon_{k,i_k}(\lambda)\subseteq \conv\{\begin{pmatrix}
            u_k^\He A^{}_{k1} u^{}_k&
            \!\!\!\ldots\!\!\!
            &u_k^\He A^{}_{km} u^{}_k
            \end{pmatrix}
            \!
            \colon
            \text{$u_k\in\mathcal U_k$ is an eigenvector corresponding to $\varepsilon_{k,i_k}(\lambda)$}
            \}
    \] 
    as for $\lambda_j\in \bigcup_{m'\in \N_0  }(S^{m'}_k)^\circ$ we have $\varepsilon_{k,i_k}(\lambda_j) = \begin{pmatrix}
        (u^j_k)^\He A^{}_{k1} u^{j}_k&
        \!\!\!\ldots\!\!\!
        &(u^j_k)^\He A^{}_{km} u^{j}_k
        \end{pmatrix}$
        with a corresponding eigenvector $u_k^j\in \mathcal U_k$ and by continuity of eigenvectors we have that limit points of $ u_k^j$ are again eigenvectors corresponding to $\varepsilon_{k,i_k}(\lambda)$. The claim follows by 
        \[
        \partial F_\mathbf i (\lambda) \subseteq \partial \varepsilon_{1,i_1}(\lambda)\times \ldots \times \partial \varepsilon_{m,i_m}(\lambda);
        \]
        see e.g.,~\cite[\pr{1.54}]{ISNewtonbook2014}.
    \end{proof}
    
    Even though the functions $F_\mathbf i$ are not necessarily everywhere differentiable, we can still employ a semismooth Newton method. The iteration scheme consists of two steps. First, any element $J$ of the generalized Jacobian $\partial F_\mathbf i(\lambda^{(j)})$ needs to be computed. Then the next iterate is given by the solution of the linear equation $F_\mathbf i(\lambda^{(j)}) + J(\lambda^{(j+1)}- \lambda^{(j)}) = 0$.
    
    A specific choice of~$J\in\partial F_\mathbf i\left(\lambda^{(j)}\right)$  with eigenvectors~$u_k\in\mathcal U_k$ corresponding to $\varepsilon_{k,i_k}\left(\lambda^{(j)}\right)$ can be computed using~\eqref{eq: generalized Jacobian}. Then $F_\mathbf i\left(\lambda^{(j)}\right)$ is given by
    \[
        F_\mathbf i\left(\lambda^{(j)}\right)
        =
        W(u_1,\ldots,u_m)
        \begin{pmatrix}
        1\\
        \lambda^{(j)}
        \end{pmatrix}
        =
        \begin{pmatrix}
        u_1^\He A^{}_{10}u_1^{} & \ldots  & u_1^\He A^{}_{1m}u_1^{} \\
        \vdots &  & \vdots                         \\
        u_m^\He A^{}_{m0}u_m^{} & \ldots  & u_m^\He A^{}_{mm}u_m^{}
        \end{pmatrix} 
         \begin{pmatrix}
        1\\
        \lambda^{(j)}_1\\
        \vdots\\
        \lambda^{(j)}_m
        \end{pmatrix}. 
    \]
    It follows, that the subsequent iterate $\lambda^{(j+1)}$ is determined by the solution to the linear equation
    \[
    W(u_1,\ldots,u_m)
    \begin{pmatrix}
    1\\
    \lambda^{(j+1)}
    \end{pmatrix}=0.
    \]
    If the MEP~\eqref{eq:MEP} is right definite, this equation possesses a unique solution. This procedure is summarized in \Cref{alg:Newton}.
    It can be adapted to handle locally definite MEPs by ensuring that iterates remain within the sets~$\mathcal P^\sigma$. This adaptation is outlined in \Cref{alg:NewtonHom}.
    It is also noteworthy that $\lambda^{(j+1)}$ can be seen as a generalization of the Rayleigh quotient corresponding to the vectors $u_1, \ldots, u_m$ and coincides with the tensor Rayleigh quotient described in e.g.~\cite{HP2002,Plestenjak2000,HMMP2019}. For algorithmic purposes we start the method with approximations of eigenvectors which are in turn used to compute eigenvalue approximations by this generalization of the Rayleigh quotient. This ensures that the first iterate~$\lambda^{(1)}$ is in~$\mathcal Q$ and~$\mathcal P^+$, respectively.
    Depending on the method used to compute eigenvalues the eigenvector approximations can be utilized. The iteration steps end by the computation of eigenvalue problems. The reason behind this is that possible stopping criteria for the algorithm rely on the value of $F_\mathbf i(\lambda^{(j)})$ which consists precisely of the resulting eigenvalues.

    To provide a rate of convergence, it is crucial that the function of interest is semismooth. Indeed, the functions $\varepsilon_{k,i_k}$ are even strongly semismooth, as shown in~\cite{SunSun2002}, meaning that 
    \begin{equation}\label{eq:semismooth}
        \sup_{J\in\partial \varepsilon_{k,i_k}(\lambda+\Delta\lambda)}
        \|\varepsilon_{k,i_k}(\lambda+\Delta\lambda)-\varepsilon_{k,i_k}(\lambda)+J \Delta\lambda\|\in
        \mathcal O(\|\Delta\lambda\|^2),
    \end{equation}
    where $\Delta\lambda$ denotes a small displacement.
    When a function from $\R^n$ into $\R^n$ is semismooth, a semismooth Newton method can be applied. If the function is strongly semismooth, local quadratic convergence is retained; see e.g.,~\cite[\ch 2]{ISNewtonbook2014} for an overview.
       
    \begin{algorithm}[t]
    \caption{Semismooth Newton method for right definite MEPs}\label{alg:Newton}
    \begin{algorithmic}
    \Require A right definite MEP of the form~\eqref{eq:MEP} and a multiindex $\mathbf i$
    \Ensure Approximation of an eigenvalue $\lambda$ of multiindex $\mathbf i$ and corresponding eigenvector $u_1,\ldots,u_m$
    \State initial guess of eigenvector $u^{(0)}_1,\ldots,u^{(0)}_m$
    
    \For {$j=1,2,\dots$}
    \State
    solve $
    W(u^{(j-1)}_1,\ldots,u^{(j-1)}_m)
    \begin{pmatrix}
    1\\
    \lambda^{(j)}
    \end{pmatrix}=0$
        \For {$k=1,\dots,m$}
            \State compute the $i_k$-th largest eigenvalue of $\sum_{\ell=0}^m \lambda^{(j)}_\ell A^{}_{k\ell}$ 
            \State compute a corresponding eigenvector $u^{(j)}_k\in\mathcal U_k$
        \EndFor
        
    \EndFor
    \State
    \Return $\lambda^{(j)}, u^{(j)}_1,\ldots,u^{(j)}_m$
    \end{algorithmic}
    \end{algorithm}

    \begin{algorithm}[t]
        \caption{Semismooth Newton method for locally definite MEPs}\label{alg:NewtonHom}
        \begin{algorithmic}
        \Require A locally definite MEP of the form~\eqref{eq:HomMEP} and a multiindex $\mathbf i$
        \Ensure Approximation of an eigenvalue $\lambda$ of signed index $(\mathbf i,\sigma)$  and corresponding eigenvector $u_1,\ldots,u_m$
        \State initial guess of eigenvector $u^{(0)}_1,\ldots,u^{(0)}_m$
        \For {$j=1,2,\dots$}
        \State
        solve $
        W(u^{(j-1)}_1,\ldots,u^{(j-1)}_m)
        \lambda^{(j)}
        =0$
        with $\lambda\in \S^n$ such that $\sign\det \begin{pmatrix}
            (\lambda^{(j)})^\T\\
            W(u_1,\ldots,u_m)
        \end{pmatrix} = \sigma$
            \For {$k=1,\dots,m$}
                \State compute the $i_k$-th largest eigenvalue of $\sum_{\ell=0}^m \lambda^{(j)}_\ell A^{}_{k\ell}$ 
                \State compute a corresponding eigenvector $u^{(j)}_k\in\mathcal U_k$
            \EndFor

        \EndFor
        \State
        \Return $\lambda^{(j)}, u^{(j)}_1,\ldots,u^{(j)}_m$
        \end{algorithmic}
        \end{algorithm}

    Since the functions $\varepsilon_{k,i_k}$ are strongly semismooth, $F_\mathbf{i}$ is also strongly semismooth; refer to e.g.,~\cite[\pr 1.73]{ISNewtonbook2014}.  As a result, applying a semismooth Newton method to $F_\mathbf{i}$ achieves local quadratic convergence if all matrices in the generalized Jacobians at solutions are nonsingular; see e.g.,~\cite[\theo 2.42]{ISNewtonbook2014}.
    To demonstrate this, we establish that the generalized Jacobian~\eqref{eq: generalized Jacobian} possesses a uniformly bounded inverse.

    \begin{proposition}\label{prop:gradientcondition}
        Let the MEP~\eqref{eq:MEP} be right definite and let 
        \[
            \mathcal J 
            \coloneqq
            \left\{
            \begin{pmatrix}
                u_1^\He A^{}_{11} u^{}_1&
                \ldots
                &u_1^\He A^{}_{1m} u^{}_1\\
                \vdots&&\vdots\\
                u_m^\He A^{}_{m1} u^{}_m&
                \ldots
                &u_m^\He A^{}_{mm} u^{}_m
                \end{pmatrix}
                \colon
                \text{$u_k\in\mathcal U_k$ for $k=1,\ldots,m$}
            \right\}.
        \]
        Then there exists $\gamma<\infty$ such that $\left\|J^{-1}\right\|\leq \gamma$ for all $J\in \conv\mathcal J$.
    \end{proposition}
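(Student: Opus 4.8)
The plan is to argue by contradiction using a compactness argument on the parameter set $\mathcal U_1\times\cdots\times\mathcal U_m$. Suppose no such uniform bound $\gamma$ exists. Since every $J\in\conv\mathcal J$ is a convex combination of matrices in $\mathcal J$, and since $\mathcal J$ is the continuous image of the compact set $\mathcal U_1\times\cdots\times\mathcal U_m$ (each entry $u_k^\He A_{k\ell}u_k$ depends continuously on $u_k$), the set $\mathcal J$ is compact, and hence $\conv\mathcal J$ is compact as well (convex hull of a compact set in finite dimensions). If $\|J^{-1}\|$ were unbounded on $\conv\mathcal J$, then either some $J\in\conv\mathcal J$ is singular, or there is a sequence $J_r\in\conv\mathcal J$ with $\|J_r^{-1}\|\to\infty$ converging to a singular $J_\infty\in\conv\mathcal J$; in both cases we obtain a singular matrix $J_\infty\in\conv\mathcal J$. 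So it suffices to show that every element of $\conv\mathcal J$ is invertible, and then invoke continuity of $J\mapsto\|J^{-1}\|$ on the compact set $\conv\mathcal J$ to conclude the existence of a finite maximum $\gamma$.

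The heart of the matter is therefore to show that every $J\in\conv\mathcal J$ is nonsingular. Here I would use right definiteness: the MEP~\eqref{eq:MEP} being definite with respect to $\mu=(1,0,\ldots,0)$ means that
\[
    \det\begin{pmatrix}
        1 & 0 & \cdots & 0\\
        u_1^\He A_{10}u_1 & u_1^\He A_{11}u_1 & \cdots & u_1^\He A_{1m}u_1\\
        \vdots & \vdots & & \vdots\\
        u_m^\He A_{m0}u_m & u_m^\He A_{m1}u_m & \cdots & u_m^\He A_{mm}u_m
    \end{pmatrix}
    = \det\begin{pmatrix}
        u_1^\He A_{11}u_1 & \cdots & u_1^\He A_{1m}u_1\\
        \vdots & & \vdots\\
        u_m^\He A_{m1}u_m & \cdots & u_m^\He A_{mm}u_m
    \end{pmatrix} > 0
\]
for all $u_k\in\mathcal U_k$, by expanding the determinant along the first row. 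In other words, every matrix in $\mathcal J$ has \emph{strictly positive} determinant. The remaining difficulty is that a convex combination of matrices with positive determinant need not itself have positive determinant — the determinant is not a concave function on all of $\R^{m\times m}$ — so I cannot conclude invertibility of $\conv\mathcal J$ directly from this.

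To close this gap I would appeal to the structure noted earlier in the excerpt after the definition of definiteness: definiteness with respect to $\mu$ is equivalent to the statement that the associated operator $\Delta$ from~\eqref{eq:deltaMatrix} is positive definite, equivalently (via \Cref{lm:localdefinite} and the discussion in~\cite[\ch 1.5]{Volkmer88}) to the existence, for every sign pattern $(\sigma_1,\ldots,\sigma_m)\in\{-1,1\}^m$, of coefficients $\alpha\in\R^{m+1}$ making $\sigma_k\sum_\ell\alpha_\ell\, u_k^\He A_{k\ell}u_k>0$ for all $u_k\in\mathcal U_k$. The clean way to exploit this: take $J\in\conv\mathcal J$ and suppose $Jx=0$ for some $x\neq 0$; writing the rows of $J$ as convex combinations $\sum_t\theta_{k,t}\,(u_k^{(t)})^\He A_{k\cdot}u_k^{(t)}$, one shows that the sign of the $k$-th row's pairing with a suitable auxiliary vector is fixed across the convex combination, forcing a contradiction with $Jx=0$. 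More precisely I expect the cleanest route is: for the inhomogeneous MEP, right definiteness implies that the numerical range set $\mathcal Q$ is well-defined and bounded, and the Jacobian at any feasible point is, up to the invertible transformation witnessing positive-definiteness of $\Delta$, a matrix whose rows have a consistent orientation; a standard argument (as in~\cite[Ch.~1]{Volkmer88} or~\cite[Ch.~1.4]{ISNewtonbook2014}) then yields $\det J\neq 0$ for all $J\in\conv\mathcal J$. The main obstacle is precisely this step — transferring positivity of $\det$ from $\mathcal J$ to $\conv\mathcal J$ — and I would handle it by reducing, via the linear change of variables that diagonalizes the definiteness, to the case $\mu=(1,0,\ldots,0)$ where one can show each row of $J$ lies in an open halfspace depending only on $k$, from which nonsingularity of the convex combination follows by a separating-hyperplane / sign argument rather than by any determinant inequality.
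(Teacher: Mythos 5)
Your compactness framework is exactly right, and you correctly isolate the one nontrivial step: right definiteness gives $\det J>0$ on $\mathcal J$ (via cofactor expansion along the row $(1,0,\ldots,0)$), but positivity of the determinant is not preserved under convex combinations of matrices in general, so something more is needed for $\conv\mathcal J$. However, you do not actually close this gap. Your first sketch (``the sign of the $k$-th row's pairing with a suitable auxiliary vector is fixed \ldots forcing a contradiction with $Jx=0$'') never specifies the auxiliary vector or explains why a fixed sign of the pairing of row $k$ with some $w_k$ would conflict with that row being orthogonal to $x$, and your fallback claim --- that each row of $J$ lying in an open halfspace depending only on $k$ forces nonsingularity --- is false as stated: in $\R^2$ the rows $(1,\varepsilon)$ and $(2,2\varepsilon)$ lie in the open halfspaces $\{v\colon v_1>0\}$ and $\{v\colon v_2>0\}$ respectively (for $\varepsilon>0$), yet the matrix they form is singular. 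A separating-hyperplane condition on the individual rows cannot by itself rule out linear dependence among them.

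The ingredient you are missing is the product structure of $\mathcal J$ over its rows: row $k$ depends only on $u_k$, so $\mathcal J$ is exactly the set of matrices whose $k$-th row is drawn independently from $\mathcal J_k=\{(u_k^\He A^{}_{k1}u^{}_k,\ldots,u_k^\He A^{}_{km}u^{}_k)\colon u_k\in\mathcal U_k\}$, and right definiteness gives $\det>0$ on this entire product set. Since the determinant is \emph{linear in each row separately}, one can replace $\mathcal J_k$ by $\conv\mathcal J_k$ one row at a time while preserving positivity: a convex combination taken in a single row produces the same convex combination of (positive) determinants. After $m$ such steps one obtains $\det J>0$ for every matrix whose rows lie in $\conv\mathcal J_1\times\cdots\times\conv\mathcal J_m$, and this set contains $\conv\mathcal J$ because any convex combination of elements of $\mathcal J$ has its $k$-th row in $\conv\mathcal J_k$. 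This row-by-row induction is what the paper carries out; no change of variables, positive definiteness of $\Delta$, or halfspace argument is required.
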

    \begin{proof}
        First note that $\mathcal J$ is a compact set in $\R^{m\times m}$ as it is the image of the continuous map
        \[
            J\colon \mathcal U_1\times\ldots\times \mathcal U_m \to \R^{m\times m},\quad u=(u_1,\ldots,u_d)\mapsto J(u)\!=\! \begin{pmatrix}
                u_1^\He A^{}_{11} u^{}_1&
                \!\ldots\!
                &u_1^\He A^{}_{1m} u^{}_1\\
                \vdots&&\vdots\\
                u_m^\He A^{}_{m1} u^{}_m&
                \!\ldots\!
                &u_m^\He A^{}_{mm} u^{}_m
            \end{pmatrix}
        \]
        of the compact set $\mathcal U_1\times\ldots\times \mathcal U_m$. It follows from Carath\'eodory's theorem that $\conv \mathcal J$ is also compact since $\R^{m\times m}$ is finite dimensional; see e.g.,~\cite[\theo 1.1 and \coro 1.9]{Tuy2016}. Finally, we show that $\det J\neq 0$ for all $J\in \conv \mathcal J$. 
        Then $\conv \mathcal J$ is a compact subset of invertible matrices and therefore the set $\{J^{-1}\colon J\in\conv \mathcal J\}$ is compact as inverting a matrix is a continuous map on invertible matrices. This implies the existence of the desired $\gamma<\infty$.
        
        We proceed by induction. 
        Let $\mathcal J_k$ be the set consisting of the $k$-th row of matrices in $\mathcal J$, i.e.,
        \[
            \mathcal J_k=\{\begin{pmatrix}
            u_k^\He A^{}_{k1} u^{}_k&
            \ldots
            &u_k^\He A^{}_{km} u^{}_k
            \end{pmatrix}\colon u_k\in \mathcal U_k\}.
        \]
        Furthermore, let $\mathcal V_k$ be the set of matrices where the rows $\ell=1,\ldots,k$ consist of elements in the convex sets $\conv \mathcal J_\ell$, respectively, and the other rows consist of elements in $\mathcal J_\ell$ for $\ell=k+1,\ldots, m$, i.e.,
        \[
            \mathcal V_k=\left\{ 
            \begin{pmatrix}
                J_1\\
                \vdots\\
                J_m
            \end{pmatrix}
            \colon
            \text{$J_\ell\in \conv\mathcal J_\ell$ for $\ell=1,\ldots, k$ and $J_\ell\in \mathcal J_\ell$ for $\ell=k+1,\ldots, m$}
            \right\}.
        \]
        From right definiteness, it follows that $\det J>0$ for $J\in\mathcal J=\mathcal V_0$. 
        Now let $\det J>0$ for all $J\in \mathcal V_{k-1}$. Then any $J\in \mathcal V_k$ is of the form $J=\sum_{j=1}^M \sigma_j J^{(j)}$ where $\sigma_j\geq 0$, $\sum_{j=1}^M\sigma_j=1$, and
        \[
            J^{(j)}
            =
            \begin{pmatrix}
                J_1\\
                \vdots\\
                J_{k-1}\\
                J^{(j)}_k\\
                J_{k+1}\\
                \vdots\\
                J_m
            \end{pmatrix}
            \quad 
            \text{for}\quad \left\{
            \begin{array}{l}
                J_\ell \in \conv\mathcal J_{\ell}\text{ for $\ell=1,\ldots, k-1$},\\
                J_k^{(j)}\in \mathcal J^{}_k,\\
                J_\ell \in \mathcal J_{\ell} \text{ for $\ell=k+1,\ldots, m$}.
            \end{array}\right.
        \]
        Then by linearity of the determinant in its rows, we have $\det J=\sum_{j=1}^M \sigma_j \det J^{(j)}>0$ as $J^{(j)}\in \mathcal V_{k-1}$. It follows by induction that $\det J>0$ for all $J\in \mathcal V_k$ and $k=0,\ldots, m$.

        Let $J\in \conv \mathcal J$. By definition, $J=\sum_{j=1}^M \sigma_j J^{(j)}$ for some $J^{(j)}\in \mathcal J$, $\sum_{j=1}^M \sigma_j =1$, and $\sigma_j\geq 0$. Then 
        \[
            J
            =
            \begin{pmatrix}
                \sum_{j=1}^M \sigma^{}_j J^{(j)}_1\\
                \vdots\\
                \sum_{j=1}^M \sigma^{}_j J^{(j)}_m
            \end{pmatrix}
            \quad 
            \text{for some $J_k^{(j)}\in \mathcal J_k$, $k=1,\ldots m $.}
        \]
        It follows that $\conv\mathcal J\subset \mathcal V_m$ and the assertion is proven.
        \end{proof}

        The convergence results are inherently local. It is possible to employ globalization strategies involving a line search, but quadratic convergence might be sacrificed. In many cases, our numerical experiments in \Cref{sec:numericalMEP} did not require globalization. We also only use the simple globalization strategy in \Cref{alg:NewtonGlobalized}.
        In the following section, we derive a global convergence result for certain eigenvalues in \Cref{thm:globalconvergence}.

        In the proposed Newton methods a good initialization can be obtained when an approximation of the desired eigenvector is available. For multiparameter Sturm-Liouville problems such approximations can be obtained from other eigenvectors corresponding to other multiindices. This strategy is for example used in~\cite{GHP_Math_2012} and was also tried for the method in~\cite{eisenmann2021solving} which is very similar to the Newton method of this work.

        \begin{algorithm}[t]
            \caption{Globalized semismooth Newton method for right definite MEPs}\label{alg:NewtonGlobalized}
            \begin{algorithmic}
                \Require A right definite MEP of the form~\eqref{eq:MEP} and a multiindex $\mathbf i$
                \Ensure Approximation of an eigenvalue $\lambda$ of multiindex $\mathbf i$ and corresponding eigenvector $u_1,\ldots,u_m$
                \State initial guess of eigenvector $u^{(0)}_1,\ldots,u^{(0)}_m$ and stepwidth parameter $\tau<1$
                \For {$j=1,2,\dots$}
                \State
                solve $
                W(u^{(j-1)}_1,\ldots,u^{(j-1)}_m)
                \begin{pmatrix}
                1\\
                \lambda^{(j)}
                \end{pmatrix}=0$
                \Repeat
                    \For {$k=1,\dots,m$}
                        \State compute the $i_k$-th largest eigenvalue $\varepsilon^{(j)}_k$ of $\sum_{\ell=0}^m \lambda^{(j)}_\ell A^{}_{k\ell}$ 
                        \State compute a corresponding eigenvector $u^{(j)}_k\in\mathcal U_k$
                    \EndFor
                \If{$j>1$ and $\|(\varepsilon_1^{(j-1)}, \dots, \varepsilon_m^{(j-1)})\|_\infty < \|(\varepsilon_1^{(j)}, \dots, \varepsilon_m^{(j)})\|_\infty$}
                \State  $\lambda^{(j)} \leftarrow \tau \lambda^{(j)} +(1-\tau)\lambda^{(j-1)}$
                \EndIf

                \Until{$\|(\varepsilon_1^{(j-1)}, \dots, \varepsilon_m^{(j-1)})\|_\infty > \|(\varepsilon_1^{(j)}, \dots, \varepsilon_m^{(j)})\|_\infty$}
                \EndFor
                \State
                \Return $\lambda^{(j)}, u^{(j)}_1,\ldots,u^{(j)}_m$
                \end{algorithmic}
            \end{algorithm}
    \subsection{Extreme eigenvalues}\label{sec:OptMEP}

    The task of identifying an eigenvalue with an extreme multiindex $\mathbf i \in \{1,n_1\}\times\ldots\times \{1,n_m\}$ can be approached from an optimization perspective. We again note that eigenvalues of the MEP~\eqref{eq:MEP} lie in the set
    \[
    \mathcal Q = \left\{\lambda\in\R^m\colon W(u)\begin{pmatrix}
       1\\
       \lambda
    \end{pmatrix}=0 \text{ for some $u\in \mathcal U_1\times\ldots\times \mathcal U_m$}  \right\}.
    \]
    The set $\mathcal Q$ is in general not convex, but eigenvalues with extreme indices serve as extreme points within the convex hull of $\mathcal Q$.
    
    \begin{proposition}
    Let the MEP~\eqref{eq:MEP} be right definite. Then 
    \[
        \mathcal Q\subseteq \conv\left\{\lambda^{\mathbf i}\colon  \mathbf i\in\{1,n_1\}\times\ldots\times \{1,n_m\}\right\}
    \]
    where $\lambda^{\mathbf i}$ denotes the eigenvalue of multiindex $\mathbf i$.
    \end{proposition}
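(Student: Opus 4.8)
The plan is to identify the extreme points of $\conv\mathcal Q$ and show that each of them is an eigenvalue with an extreme multiindex; the claimed inclusion then follows from the Krein--Milman theorem. The first step is to observe that $\mathcal Q$ is compact: right definiteness means $\det\begin{pmatrix}(1,0,\dots,0)\\ W(u)\end{pmatrix}>0$, and since this determinant equals $\det\hat W(u)$, where $\hat W(u)$ is $W(u)$ with its zeroth column deleted, the map $u\mapsto\lambda(u):=-\hat W(u)^{-1}(u_1^\He A_{10}u_1,\dots,u_m^\He A_{m0}u_m)$ is well defined and continuous on the compact connected set $\mathcal U_1\times\dots\times\mathcal U_m$, with image exactly $\mathcal Q$. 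Hence $\conv\mathcal Q$ is compact and convex, and by Milman's theorem each of its extreme points already lies in $\mathcal Q$. So it suffices to prove that if $\lambda^\star$ is an extreme point of $\conv\mathcal Q$ with witness $u^\star$, i.e.\ $W(u^\star)\begin{pmatrix}1\\\lambda^\star\end{pmatrix}=0$, then $(\lambda^\star,u_1^\star\otimes\dots\otimes u_m^\star)$ is an eigenpair of~\eqref{eq:MEP} whose multiindex lies in $\{1,n_1\}\times\dots\times\{1,n_m\}$; by \Cref{thm:signedindex} this forces $\lambda^\star=\lambda^{\mathbf i}$ for that $\mathbf i$.

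The heart of the proof is a one-coordinate slice. Write $B_k(\lambda)=A_{k0}+\sum_{\ell=1}^m\lambda_\ell A_{k\ell}$. Fix $u_j^\star$ for $j\ne k$ and let only $u_k$ vary over $\mathcal U_k$: the equations $u_j^{\star\He}B_j(\lambda)u_j^\star=0$ for $j\ne k$ stay unchanged, and as $\hat W(u^\star)$ is invertible these $m-1$ affine equations in $\lambda$ cut out a line $L_k=\lambda^\star+\R v^{(k)}$, on which the point $\lambda(u_1^\star,\dots,u_k,\dots,u_m^\star)$ must lie for every $u_k$. Inserting $\lambda=\lambda^\star+t\,v^{(k)}$ into the remaining $k$-th equation gives $t=t(u_k)=-\,u_k^\He B_k(\lambda^\star)u_k/(u_k^\He C_k u_k)$ with $C_k=\sum_{\ell=1}^m v^{(k)}_\ell A_{k\ell}$. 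Since $\lambda(\cdot)$ is everywhere defined and continuous, $t(u_k)$ is finite for all $u_k$; moreover, if $u_k^\He C_k u_k$ vanished for some $u_k$, then (because the corresponding $\lambda(\cdot)$ lies on $L_k$ and satisfies the $k$-th equation) $u_k^\He B_k(\lambda^\star)u_k$ would vanish too, making all $m$ rows of the corresponding matrix $W$ orthogonal to the two-dimensional space $\spa\{(1,\lambda^\star),(0,v^{(k)})\}$, so $\rank W<m$, contradicting right definiteness. Thus $C_k$ is definite; replacing $v^{(k)}$ by $-v^{(k)}$ if needed we may take $C_k\succ0$, and then $t(u_k)$ is the negated generalized Rayleigh quotient of the definite pencil $(B_k(\lambda^\star),C_k)$, whose range is the interval $[-\nu^{(k)}_{\max},-\nu^{(k)}_{\min}]$ between the negatives of the extreme generalized eigenvalues. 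Hence the slice $\gamma_k:=\{\lambda(u_1^\star,\dots,u_k,\dots,u_m^\star):u_k\in\mathcal U_k\}\subseteq\mathcal Q$ is the segment $\lambda^\star+[-\nu^{(k)}_{\max},-\nu^{(k)}_{\min}]\,v^{(k)}$, on which $\lambda^\star$ corresponds to $t=0$ because $u_k^{\star\He}B_k(\lambda^\star)u_k^\star=0$.

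Being an extreme point of $\conv\mathcal Q\supseteq\gamma_k$, the point $\lambda^\star$ must be an endpoint of the segment $\gamma_k$ (or $\gamma_k$ collapses to $\{\lambda^\star\}$). So $0$ is an extreme generalized eigenvalue of $(B_k(\lambda^\star),C_k)$: if it is the largest, then $B_k(\lambda^\star)\preceq0$, which together with $u_k^{\star\He}B_k(\lambda^\star)u_k^\star=0$ forces $B_k(\lambda^\star)u_k^\star=0$ with $0$ the \emph{largest} eigenvalue of $B_k(\lambda^\star)$, i.e.\ $i_k=1$; if it is the smallest, then $B_k(\lambda^\star)\succeq0$ and the same reasoning gives $i_k=n_k$ (and $\gamma_k=\{\lambda^\star\}$ means $B_k(\lambda^\star)=0$, where either value is admissible). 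Running this for $k=1,\dots,m$ exhibits $(\lambda^\star,u_1^\star\otimes\dots\otimes u_m^\star)$ as an eigenpair of~\eqref{eq:MEP} with a multiindex $\mathbf i\in\{1,n_1\}\times\dots\times\{1,n_m\}$, so $\lambda^\star=\lambda^{\mathbf i}$. Since every extreme point of the compact convex set $\conv\mathcal Q$ is therefore one of these finitely many eigenvalues, Krein--Milman yields $\mathcal Q\subseteq\conv\mathcal Q\subseteq\conv\{\lambda^{\mathbf i}:\mathbf i\in\{1,n_1\}\times\dots\times\{1,n_m\}\}$.

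The step I expect to be the most delicate is the slice analysis in the second paragraph: showing that $\gamma_k$ is genuinely a \emph{segment} with endpoints governed by extreme eigenvalues rests on proving that $C_k$ is definite — precisely where (local) definiteness is used — and on a careful identification of the slicing parameter $t(u_k)$ with a generalized Rayleigh quotient, together with a separate treatment of the degenerate case $\gamma_k=\{\lambda^\star\}$ (equivalently $B_k(\lambda^\star)=0$). By contrast, the topological reduction (compactness of $\mathcal Q$, Milman's theorem, Krein--Milman) and the final eigenvalue/eigenvector bookkeeping are routine once the relevant pencils are known to be definite.
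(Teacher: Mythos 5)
Your proof is correct, but it takes a genuinely different route from the paper's. The paper argues pointwise and in the image space: for an arbitrary $\lambda\in\mathcal Q$ with witness $u$, it evaluates the affine map $\lambda'\mapsto W(u)\left(\begin{smallmatrix}1\\\lambda'\end{smallmatrix}\right)$ at the $2^m$ extreme eigenvalues $\lambda^{\mathbf i}$, observes that the resulting vectors have the sign pattern dictated by $\mathbf i$ (the $k$-th entry is $\le 0$ if $i_k=1$ and $\ge 0$ if $i_k=n_k$, since $0$ is then the largest resp.\ smallest eigenvalue of $\sum_\ell\lambda^{\mathbf i}_\ell A_{k\ell}$), so that one vector lies in each closed orthant of $\R^m$ and hence $0$ lies in their convex hull; injectivity of the affine map (right definiteness) then pulls this back to $\lambda\in\conv\{\lambda^{\mathbf i}\}$. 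You instead argue globally and in the preimage space: compactness of $\mathcal Q$ plus Milman/Krein--Milman reduces the claim to identifying the extreme points of $\conv\mathcal Q$, and your one-coordinate slicing shows that each such extreme point is an eigenvalue whose multiindex is extreme in every component. The paper's argument is considerably shorter and avoids any topology beyond finite-dimensional convexity, needing only the sign observation and \Cref{thm:signedindex}; yours is more work (definiteness of the pencils $C_k$, connectedness of $\mathcal U_k$, the degenerate-slice case) but yields more structural information as a by-product --- namely that the coordinate slices of $\mathcal Q$ are line segments whose endpoints correspond to extreme generalized eigenvalues, and that the extreme points of $\conv\mathcal Q$ are precisely among the $\lambda^{\mathbf i}$ with $\mathbf i\in\{1,n_1\}\times\ldots\times\{1,n_m\}$. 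Both proofs ultimately lean on the same two pillars: invertibility of the Jacobian-type matrices coming from right definiteness, and the characterization of extreme multiindices via semidefiniteness of $\sum_\ell\lambda_\ell A_{k\ell}$.
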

    \begin{proof}
    Let $\lambda\in \mathcal Q$ and let $u\in \mathcal U_1\times\ldots\times \mathcal U_m$ be a corresponding vector such that $W(u)\begin{pmatrix}
       1\\
       \lambda
    \end{pmatrix}=0$. We denote $\mathcal E=\{1,n_1\}\times\ldots\times \{1,n_m\}$ as the set of extreme multiindices. Next, notice that 
    \[
    W(u)\begin{pmatrix}
       1\\
       \lambda^{\mathbf i}
       \end{pmatrix}
       =
       \begin{pmatrix}
          \epsilon_1\\
          \vdots\\
          \epsilon_m
       \end{pmatrix}
    \]
    where $\epsilon_k\geq0$ if $i_k=n_k$ and $\epsilon_k\leq0$ if $i_k=1$ as $0$ is either the smallest or largest eigenvalue of $\sum_{\ell=0}^m \lambda^\mathbf i_\ell A_{kl}$ by the definition of $\lambda^\mathbf i$. Hence,
    \[
    0\in \conv\left\{W(u)\begin{pmatrix}
       1\\
       \lambda^{\mathbf i}
       \end{pmatrix}\colon \mathbf i\in \mathcal E \right\}
       =
        W(u)\left\{\begin{pmatrix}
       1\\
       \lambda
       \end{pmatrix}\colon \lambda \in \conv \left\{\lambda^{\mathbf i}\colon\mathbf i\in \mathcal E\right\} \right\}
    \]
    as for each $\mathbf i\in \mathcal E$ the vectors 
    $ W(u)\begin{pmatrix} 1\\
    \lambda\end{pmatrix}$
    lie in different quadrants of $\R^m$.
    Since $W(u)$ is of full rank, the solution of $W(u)\begin{pmatrix}
       1\\
       \lambda
    \end{pmatrix}=0$ is unique and the assertion follows.
    \end{proof}

    Directly from the previous discussion, we can observe that
    \[
    \min_{\lambda\in\mathcal Q}\mu^\T\lambda
    \]
    is achieved at an eigenvalue associated with an extreme multiindex. In the case where the MEP~\eqref{eq:MEP} is both left definite with respect to $\mu$ and right definite, we have the ability to identify which extreme multiindex corresponds to a minimizing eigenvalue.
    \begin{proposition}\label{prop:minimumproperty}
    Let the MEP~\eqref{eq:MEP} be right definite and left definite with respect to $\mu$. Then 
    \[
    \min_{\lambda\in\mathcal Q}\mu^\T\lambda
    \]
    is attained at the eigenvalue  $\lambda^{\mathbf 1}$ of multiindex  $\mathbf{1}=(1,\ldots, 1)$. Furthermore, there is a $\beta >0$ such that $\mu^\T\left(\lambda-\lambda^{\mathbf 1}\right)\geq \beta \left\|\lambda-\lambda^{\mathbf 1}\right\|$ for all $\lambda\in \mathcal Q$.
    \end{proposition}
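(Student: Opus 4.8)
The plan is to work throughout with the reduced numerical-range matrix $B(u)=\bigl(u_k^\He A_{k\ell}^{}u_k^{}\bigr)_{k,\ell=1}^m\in\R^{m\times m}$, i.e.\ the lower-right $m\times m$ block of $W(u)$, and to combine two sign facts: right definiteness forces $\det B(u)>0$, while left definiteness with respect to $\mu$ forces the associated Cramer numerators to be positive. (In $\mu^\T\lambda$ we read $\mu$ as $(\mu_1,\dots,\mu_m)$, the nonzero block of $\mu=(0,\mu_1,\dots,\mu_m)$.) As setup: by \Cref{thm:signedindex} and right definiteness the eigenvalue $\lambda^{\mathbf 1}$ of multiindex $\mathbf 1=(1,\dots,1)$ exists, is unique in $\mathcal P^+$, and scales so that $\lambda^{\mathbf 1}\in\mathcal Q$; and by \Cref{def:index} with $i_k=1$, the matrix $A_k(\lambda^{\mathbf 1}):=A_{k0}^{}+\sum_{\ell=1}^m\lambda^{\mathbf 1}_\ell A_{k\ell}^{}$ has $0$ as its largest eigenvalue, i.e.\ $A_k(\lambda^{\mathbf 1})\preceq 0$ for every $k$.

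The first substantive step is to show that $B(u)d$ is entrywise nonnegative, where $d=\lambda-\lambda^{\mathbf 1}$. Fix any $\lambda\in\mathcal Q$ and a corresponding tuple $u=(u_1,\dots,u_m)$, $u_k\in\mathcal U_k$, with $W(u)\begin{pmatrix}1\\\lambda\end{pmatrix}=0$, and set $r=B(u)d$. Using the $k$-th row of the eigenvalue equation to eliminate $u_k^\He A_{k0}^{}u_k^{}$, one computes $r_k=\sum_{\ell=1}^m u_k^\He A_{k\ell}^{}u_k^{}\,(\lambda_\ell-\lambda^{\mathbf 1}_\ell)=-\,u_k^\He A_k(\lambda^{\mathbf 1})u_k^{}\ge 0$ by the setup. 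Hence all entries of $r$ are $\ge 0$.

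Next comes minimality together with the quantitative bound. Right definiteness says $\det\begin{pmatrix}(1,0,\dots,0)\\W(u)\end{pmatrix}>0$; expanding along the first row gives $\det B(u)>0$. Left definiteness with respect to $\mu$ is exactly the statement that, for each $k$, the determinant of $B(u)$ with its $k$-th row replaced by $(\mu_1,\dots,\mu_m)$ is positive --- this is \eqref{eq:leftdeficond}. Applying Cramer's rule to the (row) equation $\alpha^\T B(u)=\mu^\T$, i.e.\ $(B(u)^\T)^{-1}\mu=\alpha$, each component $\alpha_k(u)$ is the ratio of one such numerator to $\det B(u)$, hence $\alpha_k(u)>0$. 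Therefore $\mu^\T d=\alpha(u)^\T B(u)d=\sum_{k=1}^m\alpha_k(u)\,r_k\ge 0$, which gives $\mu^\T\lambda\ge\mu^\T\lambda^{\mathbf 1}$ for every $\lambda\in\mathcal Q$, the asserted minimality. For the sharper estimate, bound $\mu^\T d\ge\bigl(\min_k\alpha_k(u)\bigr)\|r\|_1$ while $\|d\|=\|B(u)^{-1}r\|\le\|B(u)^{-1}\|\,\|r\|_1$, so $\mu^\T d\ge\bigl(\min_k\alpha_k(u)\bigr)\|B(u)^{-1}\|^{-1}\,\|d\|$. It remains to bound the prefactor below uniformly: $\mathcal U_1\times\dots\times\mathcal U_m$ is compact, $u\mapsto B(u)$ is continuous, $\det B(u)$ is continuous and strictly positive hence $\ge\delta>0$ there, so $u\mapsto B(u)^{-1}$ and $u\mapsto\alpha_k(u)$ are continuous; thus $u\mapsto\bigl(\min_k\alpha_k(u)\bigr)\|B(u)^{-1}\|^{-1}$ is continuous and strictly positive on a compact set and has a positive infimum $\beta$, which works for all $\lambda\in\mathcal Q$.

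The main obstacle is the nonnegativity step: one must correctly identify $r_k$ with $-u_k^\He A_k(\lambda^{\mathbf 1})u_k^{}$ via the eigenvalue equation for $\lambda$ and then read off $r_k\ge 0$ from the multiindex-$\mathbf 1$ characterization. A secondary point requiring care, essential for the uniform $\beta$, is that $\alpha_k(u)>0$ must hold for \emph{every} $u_k\in\mathcal U_k$ and not merely along eigenvector tuples; this is automatic because both the Cramer numerator \eqref{eq:leftdeficond} and the denominator $\det B(u)$ are positive on all of $\mathcal U_1\times\dots\times\mathcal U_m$. The remaining norm bookkeeping is routine and, as noted before \Cref{prop:minimumproperty}, norm-independent.
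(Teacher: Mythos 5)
Your proof is correct and follows essentially the same route as the paper's: both identify the residual $J(u)(\lambda-\lambda^{\mathbf 1})$ (your $B(u)d$) as entrywise nonnegative via the multiindex-$\mathbf 1$ characterization, both obtain positivity of the entries of $\mu^\T J(u)^{-1}$ from Cramer's rule together with \eqref{eq:leftdeficond}, and both get the uniform constant $\beta$ from compactness of $\mathcal U_1\times\dots\times\mathcal U_m$ (the paper additionally cites \Cref{prop:gradientcondition} for the uniform bound on $\|J(u)^{-1}\|$, which you rederive directly). No gaps.
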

    \begin{proof}
    Let $\lambda\in\mathcal Q$ and choose $u\in\mathcal U_1\times\dots\times \mathcal U_m$ such that 
    $W(u)
    \begin{pmatrix}
       1\\ \lambda
    \end{pmatrix}
    =0$. Note that the entries of  $W(u)
    \begin{pmatrix}
       1\\ \lambda^{\mathbf 1}
    \end{pmatrix}
    $ are nonpositive since $0$ is the largest eigenvalue of the matrices $\sum_{\ell=0}^m \lambda_{\ell}^{\mathbf 1}A^{}_{k\ell}$ for $k=1,\ldots, m$. Hence, for the matrix $J(u)$ defined via the equality $J(u) \lambda =  W(u)\begin{pmatrix} 0 \\ \lambda\end{pmatrix}$, we have 
    \begin{equation}\label{eq:defineJmatrix}
        J(u)\left(\lambda - \lambda^{\mathbf 1}\right) =
    \begin{pmatrix}
       u_1^\He A_{11}^{}u_1^{}&\ldots &u_1^\He A_{1m}^{}u_1^{}\\
       \vdots &&\vdots\\
       u_m^\He A_{m1}^{}u_m^{}&\ldots &u_m^\He A_{mm}^{}u_m^{}
    \end{pmatrix}
    \left(\lambda - \lambda^{\mathbf 1}\right) = 
    \begin{pmatrix}
      \epsilon_1\\
      \vdots\\
      \epsilon_m
    \end{pmatrix}
x    \end{equation}
    with $\epsilon_k\geq0$ for $k=1,\ldots, m$. If $\lambda\neq \lambda^{\mathbf 1}$, then at least one inequality is strict and by \Cref{prop:gradientcondition} there is~$\tilde \gamma$
    \[
    \tilde\gamma \max_{k=1,\ldots,m} \abs{\epsilon_k}
    = \tilde \gamma \| J(u) (\lambda) - \lambda^\mathbf 1 \|_\infty  
    \geq \left\|\lambda - \lambda^\mathbf 1 \right\|.
    \] By right definiteness the matrix $J(u)$ is invertible, and by Cramer's rule and the condition~\eqref{eq:leftdeficond} for left definiteness the entries of $\mu^\T (J(u))^{-1}$ are positive. Furthermore, by compactness of the sets $\mathcal U_1,\ldots, \mathcal U_m$ there is an $\alpha>0$ such that the entries are at least $\alpha$ for every $u\in\mathcal U_1\times\ldots\times \mathcal U_m$.
    Hence,
    \[
    \mu^\T\left(\lambda^{\mathbf 1}-\lambda\right)
    =
    \mu^\T (J(u))^{-1}J(u)\left(\lambda-\lambda^{\mathbf 1}\right)
    \geq \alpha  \| J(u) (\lambda) - \lambda^\mathbf 1 \|_\infty  
    \geq \frac{\alpha}{\tilde \gamma}\left\|\lambda- \lambda^{\mathbf 1}\right\|,
    \]
    which shows the assertion.
    \end{proof}
    
    Similarly, we can argue that $\mu^\T\left(\lambda^{\mathbf i}-\lambda^{\mathbf j}\right)\leq 0$ holds whenever $i_k\leq j_k$ for $k=1,\ldots,m$. This can also be formulated in the following way.
    Let $\mathbf i\leq \mathbf j$ whenever $i_k\leq j_k$ for all $k = 1,\ldots, m$, i.e., the product partial order, and $\mathbf i< \mathbf j$ if $\mathbf i\leq \mathbf j$ and $\mathbf i\neq \mathbf j$.
    The mapping $\mathbf i\mapsto \mu^\T \lambda^\mathbf{i}$ is order-preserving concerning the product order on multiindices, that is, $\mu^\T\lambda^{\mathbf i} 
    \leq
    \mu^\T\lambda^{\mathbf j}$ when $\mathbf i \leq \mathbf j$. This is for example the case in multiparameter Sturm-Liouville problems coming from the Helmholtz equation \eqref{eq:helmholtz}. Then the eigenvalue of the original multidimensional Laplace operator can only increase if one component function has a larger number of internal zeros.
    
    \begin{proposition}\label{prop:smallindices}
    Let the MEP~\eqref{eq:MEP} be right definite and left definite with respect to $\mu$. Let $\lambda^{\mathbf i}$ and $\lambda^{\mathbf j}$ be eigenvalues of multiindices $\mathbf i$ and $\mathbf{j}$. Then 
    \[
    \mu^\T\lambda^{\mathbf i} 
    \leq
    \mu^\T\lambda^{\mathbf j}
    \]
    if $i_k\leq j_k$ for $k=1,\ldots, m$. 
    \end{proposition}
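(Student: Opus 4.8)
The plan is to reduce \pr{prop:smallindices} to \pr{prop:minimumproperty} by exploiting transitivity together with the chain structure of the product order. First I would observe that it suffices to prove the statement in the case where $\mathbf i$ and $\mathbf j$ differ in exactly one coordinate, say $i_p < j_p$ and $i_k = j_k$ for $k\neq p$, since any pair $\mathbf i \leq \mathbf j$ can be joined by a finite chain $\mathbf i = \mathbf i^{(0)} < \mathbf i^{(1)} < \cdots < \mathbf i^{(N)} = \mathbf j$ in which consecutive multiindices differ in a single coordinate by one, and then $\mu^\T\lambda^{\mathbf i} \leq \mu^\T\lambda^{\mathbf j}$ follows by adding up the inequalities along the chain.

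Next I would set up the single-coordinate case in the same spirit as the proof of \pr{prop:minimumproperty}. Let $u\in\mathcal U_1\times\dots\times\mathcal U_m$ be a vector realizing $\lambda^{\mathbf j}$, i.e.\ with $u_k$ an eigenvector of $\sum_{\ell=0}^m \lambda^{\mathbf j}_\ell A_{k\ell}$ for the $i_k$- resp.\ $j_k$-th largest eigenvalue, so that $W(u)\begin{pmatrix}1\\\lambda^{\mathbf j}\end{pmatrix}=0$. Now evaluate $W(u)\begin{pmatrix}1\\\lambda^{\mathbf i}\end{pmatrix}$: for the rows $k\neq p$ the entry is $u_k^\He\bigl(\sum_{\ell=0}^m\lambda^{\mathbf i}_\ell A_{k\ell}\bigr)u_k$, but since $i_k=j_k$ there is no immediate sign control. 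The cleaner route is to use instead the vector realizing $\lambda^{\mathbf i}$ and compare against $\lambda^{\mathbf j}$, or — more robustly — to mimic the argument of \pr{prop:minimumproperty} verbatim with $\lambda^{\mathbf 1}$ replaced by $\lambda^{\mathbf i}$ and the zero eigenvalue position shifted: one shows that with the eigenvectors realizing $\lambda^{\mathbf j}$, the components of $W(u)\begin{pmatrix}1\\\lambda^{\mathbf i}\end{pmatrix}$ are all $\geq 0$ in the $p$-th coordinate and one then needs the off-coordinate entries to vanish or have a controlled sign. I expect this sign bookkeeping to be the main obstacle: for a general pair $\mathbf i\leq\mathbf j$ the matrices $\sum_\ell\lambda^{\mathbf i}_\ell A_{k\ell}$ and $\sum_\ell\lambda^{\mathbf j}_\ell A_{k\ell}$ are genuinely different, so one cannot simply read off that $0$ sits on one side of an eigenvalue.

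The way around this is to reduce to adjacent multiindices and then invoke the monotonicity already built in the text. Concretely, for the single-coordinate step $\mathbf i < \mathbf i + e_p = \mathbf j$, pick the vector $u$ realizing the multiindex $\mathbf j$ for $\lambda^{\mathbf j}$; then $J(u)(\lambda^{\mathbf j}-\lambda^{\mathbf i})$ has a sign pattern governed by the fact that passing from the $i_p$-th to the $(i_p+1)$-th largest eigenvalue in row $p$ forces the corresponding entry to have a definite sign, while in the other rows the index is unchanged, so one may instead run the comparison through the Rayleigh-quotient characterization: $0 = u_k^\He(\sum_\ell\lambda^{\mathbf j}_\ell A_{k\ell})u_k$ and the $i_k$-th largest eigenvalue of $\sum_\ell\lambda^{\mathbf i}_\ell A_{k\ell}$ is $\geq u_k^\He(\sum_\ell\lambda^{\mathbf i}_\ell A_{k\ell})u_k$ for $k\neq p$ once one knows $i_k$ is the same index — this inequality, together with Cramer's rule and the left-definiteness condition~\eqref{eq:leftdeficond} giving positivity of the entries of $\mu^\T J(u)^{-1}$ exactly as in \pr{prop:minimumproperty}, yields $\mu^\T(\lambda^{\mathbf j}-\lambda^{\mathbf i})\geq 0$. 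Summing over the chain completes the proof. The routine verifications (compactness of $\mathcal U_k$, invertibility of $J(u)$ from right definiteness, sign of the cofactors from left definiteness) are identical to those in \pr{prop:minimumproperty} and need not be repeated.
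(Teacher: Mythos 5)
Your overall architecture is right---produce a $u$ for which $J(u)\bigl(\lambda^{\mathbf i}-\lambda^{\mathbf j}\bigr)$ has a definite sign pattern and then push the positivity of the entries of $\mu^\T J(u)^{-1}$ through exactly as in \Cref{prop:minimumproperty}---but the step that is supposed to deliver the sign pattern is broken. You take $u_k$ to be the eigenvector of $\sum_{\ell=0}^m\lambda^{\mathbf j}_\ell A_{k\ell}$ belonging to its $j_k$-th largest eigenvalue $0$, and then assert that the $i_k$-th largest eigenvalue of $\sum_{\ell=0}^m\lambda^{\mathbf i}_\ell A_{k\ell}$ (which is $0$) dominates $u_k^\He\bigl(\sum_{\ell=0}^m\lambda^{\mathbf i}_\ell A_{k\ell}\bigr)u_k$. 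That inequality is false in general: the Rayleigh quotient of a single arbitrary unit vector is bounded above only by the \emph{largest} eigenvalue, not by the $i_k$-th largest one when $i_k>1$, and nothing ties your particular $u_k$ to the part of the spectrum of $\sum_{\ell=0}^m\lambda^{\mathbf i}_\ell A_{k\ell}$ lying below $0$. The reduction to adjacent multiindices does not remove this obstacle either: changing a single component of the multiindex changes $\lambda$ and hence changes \emph{all} of the matrices $\sum_{\ell=0}^m\lambda_\ell A_{k\ell}$, including those with $k\neq p$, so the chain decomposition buys you nothing and can be dropped.

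The missing idea is a dimension count. For each $k$ let $V_k$ be the span of the eigenvectors of $\sum_{\ell=0}^m\lambda^{\mathbf j}_\ell A_{k\ell}$ belonging to its $j_k$ largest eigenvalues (all $\geq 0$ since $0$ is the $j_k$-th largest), and let $W_k$ be the span of the eigenvectors of $\sum_{\ell=0}^m\lambda^{\mathbf i}_\ell A_{k\ell}$ belonging to its $n_k-i_k+1$ smallest eigenvalues (all $\leq 0$ since $0$ is the $i_k$-th largest). Because $\dim V_k+\dim W_k=j_k+n_k-i_k+1\geq n_k+1$, the intersection $V_k\cap W_k$ contains a unit vector $u_k$, and for this $u_k$ one has simultaneously $u_k^\He\bigl(\sum_{\ell=0}^m\lambda^{\mathbf j}_\ell A_{k\ell}\bigr)u_k\geq 0$ and $u_k^\He\bigl(\sum_{\ell=0}^m\lambda^{\mathbf i}_\ell A_{k\ell}\bigr)u_k\leq 0$, hence $u_k^\He\sum_{\ell=0}^m\bigl(\lambda^{\mathbf i}_\ell-\lambda^{\mathbf j}_\ell\bigr)A_{k\ell}u_k\leq 0$. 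This yields nonpositive entries of $J(u)\bigl(\lambda^{\mathbf i}-\lambda^{\mathbf j}\bigr)$ for the general pair $\mathbf i\leq\mathbf j$ in one stroke---which is how the paper argues---and the conclusion then follows from $\mu^\T J(u)^{-1}$ having positive entries as in \Cref{prop:minimumproperty}.
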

    \begin{proof}
    We show that there is a $u\in\mathcal U_1\times\dots\times \mathcal U_m$ such that 
    $J(u)\left(\lambda^{\mathbf i}-\lambda^{\mathbf j}\right)$ has nonpositive entries. Then the result follows analogously as in the proof of \Cref{prop:minimumproperty}. 
    Since $0$ is the $j_k$-th largest eigenvalue of 
    $\sum_{\ell=0}^m \lambda_{\ell}^{\mathbf j}A^{}_{k\ell}$ and $0$ is the $i_k$-th largest eigenvalue of
    $\sum_{\ell=0}^m \lambda_{\ell}^{\mathbf i}A^{}_{k\ell}$, and since $i_k\leq j_k$, there is a $u_k\in \mathcal U_k$ such that 
    $u_k^\He\sum_{\ell=0}^m \left(\lambda_{\ell}^{\mathbf i}-\lambda_{\ell}^{\mathbf j}\right)A^{}_{k\ell}u_k^{}\leq 0$. 
    It follows that $J(u_1,\ldots,u_m)\left(\lambda_{\ell}^{\mathbf i}-\lambda_{\ell}^{\mathbf j}\right)$ has nonpositive entries and the result follows.
    \end{proof}
    
    Therefore, in the case of a MEP~\eqref{eq:MEP} being left and right definite, it is possible to compute eigenvalues with small $\mu^\T \lambda$ using \Cref{alg:Newton} by targeting small multiindices. Additionally, left definiteness ensures global convergence of the methods when the desired eigenvalue has the multiindex $\mathbf 1$.

\begin{theorem}\label{thm:globalconvergence}
    Let the MEP~\eqref{eq:MEP} be left and right definite.
    The sequence $\lambda^{(j)}$ generated by \Cref{alg:Newton} converges globally to $\lambda^{\mathbf 1}$ when the required multiindex is $\mathbf 1$.
\end{theorem}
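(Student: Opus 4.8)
The plan is to use the scalar $\mu^\T\lambda^{(j)}$ as a Lyapunov function: I would show that it is monotonically non-increasing and converges to $\mu^\T\lambda^{\mathbf 1}$, and then invoke \Cref{prop:minimumproperty} to promote this to $\lambda^{(j)}\to\lambda^{\mathbf 1}$. Two preliminary observations drive the argument. First, for every $j\ge 1$ the iterate $\lambda^{(j)}$ lies in $\mathcal Q$: it is the solution of $W(u^{(j-1)})(1,\lambda^{(j)})^\T=0$, which is unique by right definiteness, and $u^{(j-1)}\in\mathcal U_1\times\dots\times\mathcal U_m$ is the (arbitrary) initial tuple of unit vectors if $j=1$ and a tuple of unit eigenvectors if $j\ge2$. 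Consequently $0$ is a Rayleigh quotient of $\sum_{\ell=0}^m\lambda^{(j)}_\ell A_{k\ell}$ for each $k$, so its largest eigenvalue is nonnegative; that is, $F_{\mathbf 1}(\lambda^{(j)})\ge0$ componentwise. Second, as observed before \Cref{alg:Newton} we have $F_{\mathbf 1}(\lambda^{(j)})=W(u^{(j)})(1,\lambda^{(j)})^\T$, while $\lambda^{(j+1)}$ solves $W(u^{(j)})(1,\lambda^{(j+1)})^\T=0$; subtracting and writing $J(u)$ for the matrix from \eqref{eq:defineJmatrix} gives the Newton identity $F_{\mathbf 1}(\lambda^{(j)})=J(u^{(j)})\bigl(\lambda^{(j)}-\lambda^{(j+1)}\bigr)$, hence $\lambda^{(j)}-\lambda^{(j+1)}=J(u^{(j)})^{-1}F_{\mathbf 1}(\lambda^{(j)})$, since $J(u^{(j)})$ is invertible by right definiteness.

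Multiplying this identity by $\mu^\T$ is the heart of the argument. By the computation in the proof of \Cref{prop:minimumproperty} — Cramer's rule together with the left definiteness condition \eqref{eq:leftdeficond}, plus compactness of $\mathcal U_1\times\dots\times\mathcal U_m$ — there is $\alpha>0$, independent of $u$, such that every entry of the row vector $\mu^\T J(u)^{-1}$ is at least $\alpha$. Combined with $F_{\mathbf 1}(\lambda^{(j)})\ge0$ this gives $\mu^\T\lambda^{(j)}-\mu^\T\lambda^{(j+1)}\ge\alpha\sum_{k=1}^m\bigl(F_{\mathbf 1}(\lambda^{(j)})\bigr)_k\ge0$. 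Thus $\bigl(\mu^\T\lambda^{(j)}\bigr)_{j\ge1}$ is non-increasing, and it is bounded below by $\mu^\T\lambda^{\mathbf 1}$ because $\lambda^{(j)}\in\mathcal Q$ and by \Cref{prop:minimumproperty}; hence it converges. Telescoping the previous inequality yields $\sum_j\sum_k\bigl(F_{\mathbf 1}(\lambda^{(j)})\bigr)_k<\infty$, so $F_{\mathbf 1}(\lambda^{(j)})\to0$.

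Finally I would upgrade this to convergence of $\lambda^{(j)}$ itself. The quantitative bound in \Cref{prop:minimumproperty} gives $\beta\,\|\lambda^{(j)}-\lambda^{\mathbf 1}\|\le\mu^\T\lambda^{(j)}-\mu^\T\lambda^{\mathbf 1}$, and the right-hand side converges, so $(\lambda^{(j)})$ is bounded. Any subsequential limit $\lambda^\ast$ satisfies $F_{\mathbf 1}(\lambda^\ast)=0$ by continuity of $F_{\mathbf 1}$, and since $F_{\mathbf 1}$ has a unique zero under right definiteness (as noted after \eqref{eq:Newtonfunctionhom}), namely $\lambda^{\mathbf 1}$, every subsequential limit equals $\lambda^{\mathbf 1}$; a bounded sequence with a unique subsequential limit converges, so $\lambda^{(j)}\to\lambda^{\mathbf 1}$. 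Equivalently, the limit of the monotone sequence $\mu^\T\lambda^{(j)}$ must equal $\mu^\T\lambda^{\mathbf 1}$, and then \Cref{prop:minimumproperty} forces $\|\lambda^{(j)}-\lambda^{\mathbf 1}\|\to0$ directly.

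I expect the monotonicity step to be the main obstacle, since it is where the two hypotheses interact: the special multiindex $\mathbf 1$ makes $F_{\mathbf 1}$ componentwise nonnegative along the iteration (because $1$ is the index of the largest eigenvalue and $\lambda^{(j)}\in\mathcal Q$), while left definiteness makes the row vector $\mu^\T J(u)^{-1}$ entrywise positive; together these turn the Newton-step identity into a strict decrease of $\mu^\T\lambda^{(j)}$ whenever $F_{\mathbf 1}(\lambda^{(j)})\ne0$. Once this Lyapunov quantity is in hand, the remaining steps — boundedness, passing to subsequential limits, and uniqueness of the zero — are routine.
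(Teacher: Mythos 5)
Your proof is correct, and its first half is the same mechanism as the paper's: both arguments rest on $\lambda^{(j)}\in\mathcal Q$, the componentwise nonnegativity of $F_{\mathbf 1}(\lambda^{(j)})$ (because $\mathbf 1$ indexes the largest eigenvalues), the Newton identity $J(u^{(j)})(\lambda^{(j)}-\lambda^{(j+1)})=F_{\mathbf 1}(\lambda^{(j)})$, and the uniform entrywise positivity of $\mu^\T J(u)^{-1}$ coming from left definiteness, Cramer's rule, and compactness. Where you diverge is the finish. You treat $\mu^\T\lambda^{(j)}$ as a Lyapunov function: monotone, bounded below, telescoping gives summability of $\sum_k(F_{\mathbf 1}(\lambda^{(j)}))_k$, hence $F_{\mathbf 1}(\lambda^{(j)})\to 0$, and uniqueness of the zero of $F_{\mathbf 1}$ plus boundedness (or directly the quantitative bound in \Cref{prop:minimumproperty}) closes the argument. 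The paper instead inserts one more quantitative estimate: by \Cref{prop:gradientcondition} one has $\gamma\|F_{\mathbf 1}(\lambda)\|\geq\|\lambda-\lambda^{\mathbf 1}\|$, so the largest entry of $F_{\mathbf 1}(\lambda^{(j)})$ is at least $\delta\,\mu^\T(\lambda^{(j)}-\lambda^{\mathbf 1})$ for a uniform $\delta>0$; feeding this into the same monotonicity inequality yields the contraction $\mu^\T(\lambda^{(j+1)}-\lambda^{\mathbf 1})\leq(1-\alpha\delta)\,\mu^\T(\lambda^{(j)}-\lambda^{\mathbf 1})$, i.e.\ \emph{global linear convergence} with an explicit rate. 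Both proofs establish the theorem as stated; the paper's version buys the linear rate advertised in the abstract, while yours is a slightly softer but entirely valid compactness argument that does not require the lower bound on $\|F_{\mathbf 1}(\lambda)\|$ in terms of $\|\lambda-\lambda^{\mathbf 1}\|$.
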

    \begin{proof}
    First notice that $\lambda^{(j)}\in\mathcal Q$ for $j\geq1$. Due to \Cref{prop:minimumproperty} it is enough to show that $\mu^\T \lambda^{(j)}$ converges to $\mu^\T \lambda^{\mathbf 1}$. From \Cref{prop:gradientcondition} and the characterization of the directional derivatives of $F_{\mathbf 1}$, it follows that
    \[
    \gamma \|F_{\mathbf 1}(\lambda)\| \geq \left\|\lambda-\lambda^{\mathbf 1}\right\|,
    \]
    and furthermore the entries of $F_{\mathbf 1}$ are nonnegative by the definition of $\mathcal Q$ and $F_{\mathbf 1}(\lambda)$. It follows, that the largest entry of $F_\mathbf 1(\lambda)$ is at least $\delta\mu^\T (\lambda - \lambda^{\mathbf 1})$ with some $\delta >0$ independent of $\lambda$ as $\mu^\T$ acts as a bounded linear functional.  The difference $\lambda^{(j)}-\lambda^{(j+1)}$ satisfies
    \[
    J\left(\lambda^{(j)}-\lambda^{(j+1)}\right)=F_\mathbf 1 \left(\lambda^{(j)}\right)
    \]
    for some $J\in\conv \mathcal J$ of \Cref{prop:gradientcondition} by the iteration formula of Newton's method. With a similar argument as in the proof of \Cref{prop:gradientcondition}, we get that $\mu^\T J^{-1}$ has entries larger than some $\alpha$ for every $J\in \conv\mathcal J$. Hence,
    \[
    \mu^\T\left(\lambda^{(j)}-\lambda^{(j+1)}\right)\geq
    \alpha \delta
    \mu^\T\left( \lambda^{(j)}-\lambda^{\mathbf 1}\right)
    \]
    and thus
    \[
    \mu^\T \left(\lambda^{(j+1)}-\lambda^{\mathbf 1}\right)
    \leq 
    \left(1-\alpha \delta \right)
    \mu^\T\left(\lambda^{(j)}-\lambda^{\mathbf 1}\right),
    \]
    that is global linear convergence of the sequence $\mu^\T\lambda^{(j)}$ to $\mu^\T\lambda^{\mathbf 1}$.
    \end{proof}

    \subsection{Handling non-Hermitian matrices}\label{sec: non Hermitian}

    The Newton method \Cref{alg:Newton} can be applied to multiparameter eigenvalue problems involving non-Hermitian matrices for the purpose of seeking eigenvalues associated with specific multiindices. Provided that all eigenvalues are real, \Cref{def:index} can be extended to non-Hermitian matrices. The challenge lies in determining precisely which multiindices are realized in the solution, particularly when dealing with non-Hermitian matrices. The situation becomes more straightforward when it is a discretization of a selfadjoint problem. However, complications may arise when discretization or transformations compromise this selfadjoint property. For this purpose, we consider the scenario, when  there exist diagonal matrices $D_1^L, \ldots, D_m^L $ and $D_1^R, \ldots, D_m^R$ with positive entries such that
    \begin{equation}\label{eq: transformed MEP}
        \setlength\arraycolsep{1pt}
        \begin{array}{cccccccccc}
            D_1^L\bigl(A_{10}& +&\lambda_1 A_{11}
            &+&\ldots&+& \lambda_m A_{1m}\bigr)&D_1^Ru_1&=&0,\\
            D_2^L\bigl(A_{20}& +&\lambda_1 A_{21}
            &+&\ldots&+& \lambda_m A_{2m}\bigr)&D_2^Ru_2&=&0,\\
            \vdots &&\vdots&&&&\vdots&\vdots&&\vdots\\
            D_m^L\bigl(A_{m0}& +&\lambda_1 A_{m1}
            &+&\ldots&+& \lambda_m A_{mm}\bigr)&D_m^Ru_m&=&0
        \end{array}         
    \end{equation}
    is a definite MEP as discussed in \Cref{sec:definiteMEPs}. The following considerations can also be applied to case of positive definite matrices $D_1^L, \ldots, D_m^L $ and $D_1^R, \ldots, D_m^R$ under the additional constraint that $D_k^L D_k^R = D_k^R D_k^L $ for $k = 1,\ldots, m$. 
    Let us assume that the eigenvalues of $ B_k(\lambda) = A_{k0}+\lambda_1 A_{k1}+\ldots \lambda_m A_{km}$ are real, and that $\varepsilon'_{k,i_k}(\lambda)$ is the $i_k$-th largest eigenvalue. If the eigenvalue is simple, its derivative can be calculated by 
    \begin{equation*}
        w_k^\He v^{}_k  \varepsilon'_{k,i_k}(\lambda)=\begin{pmatrix}
            w_k^\He A^{}_{k1} v^{}_k&
            \!\!\!\ldots\!\!\!
            &
            w_k^\He A^{}_{km} v^{}_k
        \end{pmatrix}
    \end{equation*}
    where $v_k$ and $w_k$  are corresponding right and left eigenvectors of $B_k(\lambda)$, and the analog of \Cref{alg:Newton} is given in \Cref{alg:Newtonnonhermitian}. 
    We treat an MEP with non-Hermitian matrices as definite if the transformed MEP~\eqref{eq: transformed MEP} is a definite MEP, as its properties are essentially the same.
    Indeed, \Cref{alg:Newtonnonhermitian} applied to such an MEP has the same iterates as \Cref{alg:Newton} applied to a similar MEP with Hermitian matrices.
    \begin{algorithm}[t]
        \caption{Semismooth Newton method for right definite MEPs with non-Hermitian matrices}\label{alg:Newtonnonhermitian}
        \begin{algorithmic}
        \Require A right definite MEP of the form~\eqref{eq:MEP} and a multiindex $\mathbf i$
        \Ensure Approximation of an eigenvalue $\lambda$ of multiindex $\mathbf i$ and corresponding right and left eigenvalues $v_1,\ldots, v_m$ and $w_1,\ldots, w_m$
        \State  initial guess of right eigenvector $v^{(0)}_1,\ldots,v^{(0)}_m$ and left eigenvector $w^{(0)}_1,\ldots,w^{(0)}_m$
        \For {$j=1,2,\dots$}
            \State
            solve $
            \tilde W(w^{(j-1)}_1,\ldots,w_m^{(j-1)}, v_1^{(j-1)},\ldots,v_m^{(j-1)})\begin{pmatrix}
                1\\
                \lambda^{(j)}
                \end{pmatrix}=0$
            \For {$k=1,\dots,m$}
                \State compute the $i_k$-th largest eigenvalue of $\sum_{\ell=0}^m \lambda^{(j-1)}_\ell A^{}_{k\ell}$ 
                \State compute a corresponding left eigenvector $w_k^{(j)}$ and right \State eigenvector $v^{(j)}_k\in\mathcal U_k$
                normalized such that $(w^{(j)}_k)^\He v^{(j)}_k= 1$
            \EndFor

        \EndFor
        \State
        \Return $\lambda^{(j)}$, $v^{(j)}_1,\ldots, v^{(j)}_m$ and $w^{(j)}_1,\ldots, w^{(j)}_m$
        \end{algorithmic}
        \end{algorithm}

        \begin{lemma}
            Let~\eqref{eq: transformed MEP} be a right definite MEP and let $\lambda\in \R^m$. Then the eigenvalues of 
            \[
            B_k = A_{k0}+\lambda_1 A_{k1}+\ldots \lambda_m A_{km}
            \]
            coincide with the eigenvalues of the Hermitian matrix 
            \[
            \tilde B_k = (D_k^R)^{-\frac12}(D_k^L)^{\frac12}B_k(D_k^L)^{-\frac12}(D_k^R)^{\frac12}.\] Furthermore, let $u_k$ be an eigenvector of $\tilde B_k$. Then $v_k = (D_k^L)^{-\frac12}(D_k^R)^{\frac12} u_k$
            is an eigenvector of $B_k$ and
            $w_k = (D_k^R)^{-\frac12}(D_k^L)^{\frac12} u_k$  is an eigenvector of $B_k^\He$, and $u_k^\He \tilde B_k u_k^{} = w_k^\He B_k v_k^{}$ hold.
        \end{lemma}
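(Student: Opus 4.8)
The plan is to reduce everything to a single similarity transformation by the \emph{real} invertible diagonal matrix $S := (D_k^R)^{-\frac12}(D_k^L)^{\frac12}$, which by construction satisfies $\tilde B_k = S B_k S^{-1}$ and $S^\He = S$. From $\tilde B_k = S B_k S^{-1}$ it is immediate that $\tilde B_k$ and $B_k$ have the same eigenvalues, so it remains only to check (i) that $\tilde B_k$ is Hermitian, and (ii) the stated eigenvector correspondences and the trace-like identity.

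For (i), the key observation is that since the MEP~\eqref{eq: transformed MEP} is definite, all of its coefficient matrices $D_k^L A_{k\ell} D_k^R$ are Hermitian (definiteness presupposes Hermitian matrices), and since $\lambda\in\R^m$ the matrix $C_k := D_k^L B_k D_k^R = \sum_{\ell=0}^m \lambda_\ell\, D_k^L A_{k\ell} D_k^R$ with $\lambda_0 = 1$ is Hermitian. Because all the diagonal factors commute with each other, one substitutes $B_k = (D_k^L)^{-1} C_k (D_k^R)^{-1}$ into the definition of $\tilde B_k$ and collects powers to obtain $\tilde B_k = (D_k^L D_k^R)^{-\frac12}\, C_k\, (D_k^L D_k^R)^{-\frac12}$, i.e.\ a congruence of the Hermitian matrix $C_k$ by the real diagonal matrix $(D_k^L D_k^R)^{-\frac12}$. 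Hence $\tilde B_k$ is Hermitian, its eigenvalues are real, and by the similarity these are exactly the eigenvalues of $B_k$.

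For (ii), write $\tilde B_k u_k = \varepsilon u_k$. Applying $S^{-1}$ gives $B_k(S^{-1}u_k) = \varepsilon\,(S^{-1}u_k)$ with $S^{-1}u_k = (D_k^L)^{-\frac12}(D_k^R)^{\frac12}u_k = v_k$, so $v_k$ is an eigenvector of $B_k$. Taking the Hermitian conjugate of $\tilde B_k = S B_k S^{-1}$ and using $S^\He = S$ yields $\tilde B_k = \tilde B_k^\He = S^{-1} B_k^\He S$, whence $B_k^\He(S u_k) = \varepsilon\,(S u_k)$ with $S u_k = (D_k^R)^{-\frac12}(D_k^L)^{\frac12}u_k = w_k$, so $w_k$ is an eigenvector of $B_k^\He$. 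Finally $u_k^\He \tilde B_k u_k = u_k^\He S B_k S^{-1} u_k = (S u_k)^\He B_k (S^{-1}u_k) = w_k^\He B_k v_k$.

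The only step requiring care is the bookkeeping of the commuting diagonal factors needed to pass from the definition of $\tilde B_k$ to the symmetric congruence form $(D_k^L D_k^R)^{-\frac12}C_k(D_k^L D_k^R)^{-\frac12}$; everything else is formal. I would also note that the same argument goes through verbatim for the more general commuting positive-definite $D_k^L, D_k^R$ allowed in the text, since the only properties actually used are that $D_k^L, D_k^R$ are Hermitian positive definite, commute with one another, and that their (Hermitian positive) square roots therefore commute with them and with each other.
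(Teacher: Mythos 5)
Your proposal is correct and follows essentially the same route as the paper: similarity of $B_k$ and $\tilde B_k$ for the eigenvalue claim, Hermiticity of $\tilde B_k$ via the congruence $(D_k^LD_k^R)^{-\frac12}\bigl(D_k^LB_kD_k^R\bigr)(D_k^LD_k^R)^{-\frac12}$ using commutativity of the diagonal factors, and direct computation for the eigenvector correspondences and the identity $u_k^\He \tilde B_k u_k^{} = w_k^\He B_k v_k^{}$. You merely spell out the steps the paper leaves as "follows in analogy" and "a straightforward calculation".
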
 
            \begin{proof}
    The first claim follows as $B_k$ and $\tilde B_k$ are similar matrices. 
    The matrix $\tilde B_k$ is Hermitian as $(D_k^L)B_k(D_k^R)$ is Hermitian and 
    \[
     \tilde B_k = (D_k^R)^{-\frac12}(D_k^L)^{-\frac12} (D_k^L)B_k(D_k^R) (D_k^R)^{-\frac12}(D_k^L)^{-\frac12}
    \]
    using the fact that diagonal matrices commute.
    The second claim follows by the calculation
    \[
    B_k v_k = B_k (D_k^L)^{-\frac12}(D_k^R)^{\frac12} u_k
    = (D_k^L)^{-\frac12}(D_k^R)^{\frac12}
    \tilde B_k  u_k =
    (D_k^L)^{-\frac12}(D_k^R)^{\frac12} \varepsilon_k u_k = \varepsilon_k v_k.
    \]
    The claim for $w_k$ follows in analogy and  $u_k^\He \tilde B_k u_k^{} = w_k^\He B_k v_k^{}$ is a straightforward calculation. 
    \end{proof}

    This scenario is approximately the case when discretizing a Sturm-Liouville problem in a nonsymmetric way, or when it is easier to discretize a selfadjoint Sturm-Liouville problem after a change of variables. Consider for example the selfadjoint Sturm-Liouville problem
    \[
        p(x) u''(x) +p'(x) u'(x) +q(x) u(x) = \lambda w(x)u(x) \quad \text{for $x\in(a,b)$} 
    \]
    with homogeneous Dirichlet or Neumann boundary conditions. 
    Let $\{\varphi_i\}_{i = 1,\ldots, N}$ be a discrete basis, such that $\varphi_i(x_j) = \delta_{ij}$ for nodes $\{x_i\}_{i= 1,\ldots, N}$. Then the matrices
    \[
        \left(\int_a^b p(x) \varphi'_i(x) \varphi'_j(x) dx\right)_{ij}, \quad
        \left(\int_a^b q(x) \varphi_i(x) \varphi_j(x) dx\right)_{ij}, \quad\text{ and }\quad
        \left(\int_a^b w(x) \varphi_i(x) \varphi_j(x) dx\right)_{ij}
    \]
    are symmetric but require the evaluation of $N(N+1)/2$ integrals each. One can instead use the nonsymmetric matrices
    \begin{equation}\label{eq: collocation matrices}
        \left(p(x_j) \varphi''_i(x_j) + p'(x_j) \varphi'_i(x_j) \right)_{ij}, \quad
        \left(q(x_j) \varphi_i(x_j)\right)_{ij}, \quad\text{ and }\quad
        \left(w(x_j) \varphi_i(x_j)\right)_{ij},
    \end{equation}
    which are products of a diagonal matrix and differentiation matrices and are usually cheap to evaluate. The first ones can however be approximated by the later ones after multiplying with a diagonal weight matrix that approximates the integrals by a quadrature.

    One simple but important example where the matrices in~\eqref{eq: collocation matrices} satisfy~\eqref{eq: transformed MEP} even exactly is when the function $p$ is constant, the basis 
    $\{\varphi_i\}_{i= 1,\ldots, N}$ is a nodal basis of polynomials up to degree $N+1$ satisfying Dirichlet boundary conditions 
    $\varphi_i(a)= 0 =\varphi_i(b)$, and the nodes $\{x_i\}_{i= 1,\ldots, N}$ are the interior nodes of the Gauss-Lobatto quadrature rule. Then the matrices become symmetric after multiplying by a diagonal matrix containing the corresponding weights.

    The other situation arises, when the selfadjointness of the Sturm-Liouville problem is lost by a change of variables. Consider for example the problem of above with $\tilde u = u\circ\phi$, $\tilde p = p\circ \phi$, $\tilde q = q\circ \phi$, and $\tilde w = w\circ \phi$. The previous Stum-Liouville problem transforms to
    \[
    \tilde p \phi' \tilde u'' +(\tilde p' \phi' -\tilde p \phi'') \tilde u' + \tilde q \phi'^3 \tilde u=  \lambda  \tilde w \phi'^3\tilde u,
    \]
    which gets selfadjoint when dividing by $\phi'^2$. 

    A very similar strategy can be applied to a non-symmetric finite difference discretization. For example, the finite difference eigenvalue problem
    \[
        a_k u_{k+1} - b_k u_k +\tilde a_{k-1}u_{k-1} = \lambda u_k 
    \]
    with positive coefficient $a_k$ and $\tilde a_k$ can be made symmetric by left multiplication by a diagonal matrix with positive entries $d_k$ satisfying the relation $d_k a_k = d_{k+1}\tilde a_k$. Hence, a nonsymmetric multiparameter finite difference problem can be transformed into a symmetric one in a way leading to~\eqref{eq: transformed MEP}. 

    In \Cref{sec: numerical experiment Ellipsoidal wave equation}, we will apply \Cref{alg:Newtonnonhermitian} to a discretization of the Ellipsoidal wave equation, where both the equation is not in a selfadjoint form and the discretization is of the form~\eqref{eq: collocation matrices}. In this case, it is unlikely that~\eqref{eq: transformed MEP} is applicable exactly. However, since the underlying continuous problem~\eqref{eq:ellipsoiderMEP} is both left and right definite, it is not unlikely that \Cref{alg:Newtonnonhermitian} finds approximate solutions to the original problem.

    \section{Numerical experiments}\label{sec:numericalMEP}
    We compare the performance of the proposed Newton-type method to various method in the \textsc{Matlab} toolbox \textsc{MultiParEig}\cite{multipareig}.
    The numerical experiments were performed on a M1 Pro processor using a single kernel. 

    It should be noted that we have not implemented our methods with the highest efficiency. For example, all methods can be parallelized in multiple ways. The computation of an eigenvalue of a given multiindex is independent of the computation of an eigenvalue with a different multiindex. 
    These can therefore be computed in parallel.
    If only one eigenvalue is computed, the computation of the $m$ different eigenvalues in the second loop of the algorithms can also be done in parallel. 

    In all experiments, we measure the accuracy of eigenvalues and eigenvectors by the normalized residual error
    \[
        \max_{k = 1,\ldots, m} \frac{1}{\|u_k\|} \left\|\sum_{\ell = 0}^m \lambda_\ell A_{kl} u_k\right\|.   
    \]
    If we compute more than one eigenvalue, we always depict the maximum of the errors in the results.

    \subsection{Ellipsoidal wave equation}\label{sec: numerical experiment Ellipsoidal wave equation}

    In our initial experiment, we opted for a specific discretization of the Helmholtz equation on the ellipsoid with semi-axes $x_0, y_0$, and $z_0$, subject to Dirichlet boundary conditions, as expressed in~\eqref{eq:ellipsoiderMEP}. Defining $a=\sqrt{z_0^2-x_0^2}$, $b=\sqrt{z_0^2-y_0^2}$, $c=a^2/b^2$, and $\omega^2=4\eta/4$, we separated the problem into equations for $u_k(t_k)$, where $k = 1,2,3$. One form of separation leads to
    \begin{equation}
        t_k (t_k - 1)(t_k - c) u''_k (t_k) + \frac12 (3t^2_k - 2 (1 + c)t_k + c) u'_k
        (t_k) + (\lambda + \mu t_k + \eta t^2_k) u_k(t_k) = 0,        
    \end{equation}
    for $k = 1,2,3$ where $t_1 \in (c, z_0^2/b)$, $t_2 \in (1,c)$ and $t_3 \in (0,1)$. Here, $\lambda$ and $\mu$ are separation parameters. These equations have singularities at $0,1$ and $c$. Assuming that the equations are also satisfied at the boundary values $0, 1, c$ and the Dirichlet boundary condition $u_1(z_0^2/b^2) = 0$ leads to one of eight possible configurations of the original problem; see~\cite{Arscott83}. For our numerical experiment, we chose $x_0= 1, y_0 = 1.5$ and $z_0=2$ in accordance to the numerical experiments in~\cite{Plestenjak2015,HKP2004}. 
    
    We discretize using a polynomial basis of degree $N-1$ on $N$~Chebyshev nodes
    \[
    t^{(j)} = \frac{(b-a)\cos\left(\frac{j-1}{N-1}\right)}{2} + \frac{a+b}{2}
    \]
    for $j  = 1,\ldots, N$ and an interval of the form $(a,b)$. The discretized problem reads
    \[
        t^{(j)}_k (t^{(j)}_k - 1)(t^{(j)}_k - c) u''_k (t^{(j)}_k) + \frac12 (3(t^{(j)}_k)^2 - 2 (1 + c)t^{(j)}_k + c) u'_k
        (t^{(j)}_k) + (\lambda + \mu t^{(j)}_k + \eta (t^{(j)}_k)^2) u_k(t^{(j)}_k) = 0,
    \]
    where $k = 1,2,3$, the functions $u_k$ are in the space of polynomials of degree $N-1$, $j = 2, \ldots, N-1$ and the described boundary conditions hold.

    For our experiments, we chose $N = 200$. Note, that the resulting matrices are not symmetric. Hence, we use \Cref{alg:Newtonnonhermitian}. In this case, we did not use a globalization strategy.

    We aim to compute eigenvalues corresponding to the lowest frequencies~$\omega$, which in this case means finding eigenvalues with the smallest value in $\eta$. To this end, we use definiteness of the original selfadjoint problem~\eqref{eq:ellipsoiderMEP} with respect to $(0,0,0,1)$ and \Cref{prop:smallindices} in the following way:
    Let~$\mathcal E$ be a list of currently computed extreme eigenvalues, that is computed eigenvalues of multiindex~$\mathbf i$ such that we have not computed an eigenvalue of multiindex~$\mathbf j$ with~$\mathbf j>\mathbf i$. Choose the index $\mathbf i$ with the smallest value~$\eta$, and remove it from~$\mathcal E$. Compute the eigenvalues with index~$\mathbf i + e_i$ with~$e_i$ the coordinate unit vectors and add them to the list~$\mathcal E$.

    \begin{table}
        \caption{Eigenvalues $(\lambda, \mu, \eta)$ and corresponding multiindex $(i_1,i_2,i_3)$ computed by \Cref{alg:Newtonnonhermitian} of the ellipsoidal wave equation as described in \Cref{sec: numerical experiment Ellipsoidal wave equation} as well as the order in which they are computed by the described strategy.}\label{table: eigenvalues of ellipsoid}
        \csvreader[
            head to column names,
            before reading = \begin{center}\sisetup{table-number-alignment=center},
            tabular =  ccc|ccc| c,
            table head = \toprule  $\lambda$ & $\mu$ & $\eta$& $i_1$ & $i_2$ & $i_3$ & order of computation\\\midrule,
            after reading = \end{center},
            ]{Data/eigenTable.csv}{}{%
             \tablenum[ round-precision=3,
            round-mode=places]{\LAMBDA} & \tablenum[ round-precision=3,
            round-mode=places]{\MU } &\tablenum[ round-precision=3,
            round-mode=places]{\ETA}&\tablenum{\ione} & \tablenum{\itwo} & \tablenum{\ithree} &\tablenum{\ord}
            }
    \end{table}
    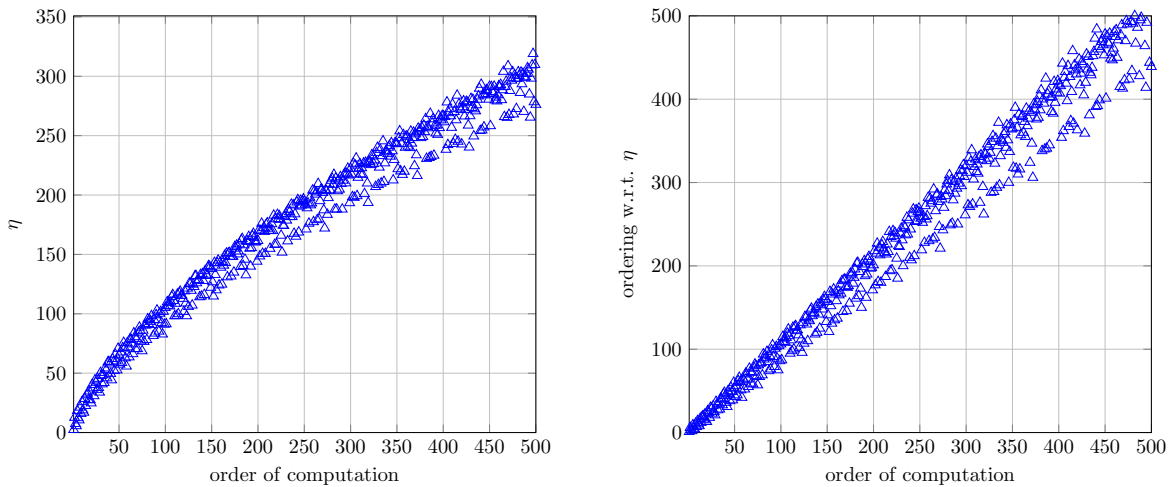
\begin{figure}
        \centering
\begin{subfigure}
  {0.496\textwidth}
   \begin{tikzpicture}[scale=0.67]
      \begin{axis}[
        width=1.3\textwidth,
        height=1.2\textwidth,
          grid=major,xlabel={order of computation},ylabel={ $\eta$},xmin=1, xmax=500,ymin = 0,
          legend style={at={(0.97,0.5)},anchor=east}] 
          \addplot[only marks,solid,color=blue,mark=triangle, mark size = 3pt ] %
          table[x=order,y=eta,col sep=comma]{Data/orderExperimentEllipsoid.csv};
          
      \end{axis}
  \end{tikzpicture}   
  \end{subfigure}
  \begin{subfigure}
    {0.496\textwidth}
     \begin{tikzpicture}[scale=0.67]
        \begin{axis}[
            width=1.3\textwidth,
            height=1.2\textwidth,
            grid=major,xlabel={order of computation},ylabel={ ordering w.r.t. $\eta$},xmin=1, xmax=500,ymin = 0,ymax = 500,
            legend style={at={(0.97,0.5)},anchor=east}] 
            \addplot[only marks,solid,color=blue,mark=triangle, mark size = 3pt ] %
            table[x=order,y=sort,col sep=comma]{Data/orderExperimentEllipsoid.csv};  
        \end{axis}
    \end{tikzpicture}   
    \end{subfigure}
        \caption{Experiment for the ellipsoidal wave equation as described in \Cref{sec: numerical experiment Ellipsoidal wave equation}. This figure shows the order of computation. }\label{fig:expEllipsOrder}
    \end{figure}
    \begin{figure}
        \centering
        \begin{subfigure}
        {0.496\textwidth}
        \begin{tikzpicture}[scale=0.67]
        \begin{axis}[
        width=1.3\textwidth,
          height=1.2\textwidth,
        grid=major,xlabel={number of computed eigenvalues},ylabel={ time in seconds},xmin=10, xmax=310,ymin = 0,
        legend style={at={(0.97,0.5)},anchor=east}] 
        \addplot[only marks,solid,color=blue,mark=triangle, mark size = 3pt ] %
        table[x=N,y=time,col sep=comma]{Data/errorTimeTableNewtonEllipsoidal.csv};
        \addplot[only marks,solid,color=red,mark=square, mark size = 2pt] %
        table[x=N,y=time1,col sep=comma]{Data/errorTimeTableJDEllipsoidal.csv};
        \addplot[only marks,solid,color=red,mark=square, mark size = 2pt] %
        table[x=N,y=time2,col sep=comma]{Data/errorTimeTableJDEllipsoidal.csv};
        \addplot[only marks,solid,color=red,mark=square, mark size = 2pt] %
        table[x=N,y=time3,col sep=comma]{Data/errorTimeTableJDEllipsoidal.csv};
        
        \end{axis}
        \end{tikzpicture}   
        
        \end{subfigure}
        \begin{subfigure}{0.496\textwidth}
        
            \begin{tikzpicture}[scale=0.67]
                \begin{semilogyaxis}[
                width=1.3\textwidth,
                  height=1.2\textwidth,
                grid=major,xlabel={number of computed eigenvalues},ylabel={ error},xmin=10, xmax=310,
                legend style={at={(0.97,0.5)},anchor=east}] 
                \addplot[only marks,solid,color=blue,mark=triangle, mark size = 3pt] %
                table[x=N,y=error,col sep=comma]{Data/errorTimeTableNewtonEllipsoidal.csv};
                \addlegendentry{\Cref{alg:Newtonnonhermitian}}
                \addplot[only marks,solid,color=red,mark=square, mark size =2pt] %
                table[x=N,y=error,col sep=comma]{Data/errorTimeTableJDEllipsoidal.csv};
                \addlegendentry{\textsc{threepareigs\_jd}}

                \end{semilogyaxis}
                \end{tikzpicture}

        \end{subfigure}
        \caption{Experiment for the ellipsoidal wave equation as described in \Cref{sec: numerical experiment Ellipsoidal wave equation}. The left figure compares the computational complexity with  \textsc{threepareigs\_jd} from~\cite{multipareig} and the right figure the acquired accuracy of eigenvalues.}\label{fig:expEllips}
            
    \end{figure}
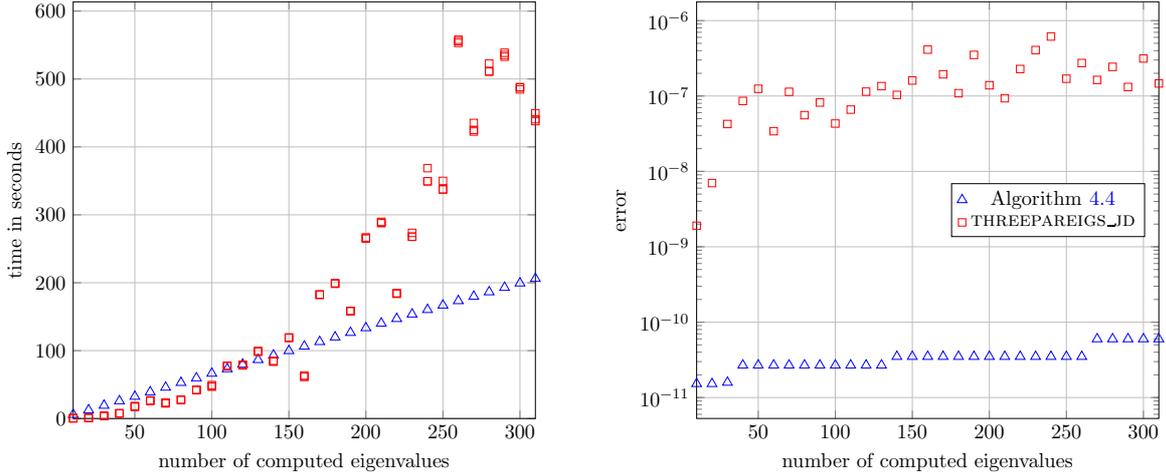

In \Cref{table: eigenvalues of ellipsoid}, we show the eigenvalues with smallest $\eta$ as well as their multiindices and the order of computation. At first, we note that the eigenvalues agree with the eigenvalues shown in~\cite[Table 6]{Plestenjak2015} for the configuration $\rho = \sigma = \tau =0$, which corresponds to our numerical experiment. The described strategy to find eigenvalues with small~$\eta$ works reasonably well. To compute the $20$  eigenvalues, we had to compute $23$ eigenvalues, for the smallest $13$ eigenvalues, we required to compute $20$ eigenvalues. The further behavior is seen in \Cref{fig:expEllipsOrder}. It is necessary to compute roughly $500$ eigenvalues to find the $400$ eigenvalues with smallest $\eta$ and this behavior seems to continue.

Furthermore, we compared our method to the subspace method \textsc{threepareigs\_jd} from the \textsc{Matlab} toolbox \textsc{MultiParEig}~\cite{multipareig}. To this end we computed the $k = 10, 20, \dots, 300, 310$ eigenvalues with small $\eta$ with both methods. We compared the runtime and the error in the eigenvalues. The runtime of our method is linear in the required number of eigenvalues, while the subspace method \textsc{threepareigs\_jd} is more time-consuming when requiring many eigenvalues. This is to be expected, as the corresponding eigenvectors to eigenvalues with small frequencies~$\omega$ do not oscillate a lot and can therefore be approximated by a smaller common subspace, while eigenvectors to slightly larger eigenvectors have successively higher oscillation but in different modes. This leads to smaller computational costs when a moderate number of eigenvalues needs to be computed.

The maximal error of the eigenvalues is considerably smaller with our method, and does not deteriorate for eigenvalues with larger frequencies. 
These findings are presented in \Cref{fig:expEllips}.

We want to remark that we have not incorporated good initialization into this experiment. Small examples indicate that the number of iterations per eigenvalue can be halved when a similar strategy as in~\cite{GHP_Math_2012} is applied.

\subsection{Randomly generated examples}\label{sec: randomly generated examples}

To further test the performance of \Cref{alg:Newton} or \Cref{alg:NewtonGlobalized} respectively, we generated definite multiparameter problems randomly. To this end, we generated the matrices $A_{10}, A_{20}, \ldots, A_{m0}$ as symmetric matrices with normally distributed entries.

For the other matrices, we took inspiration from~\eqref{eq:ellipsoiderMEP}. We generate diagonal matrices $D_1, D_2, \ldots, D_m$, where $D_k$ has uniformly distributed entries in the interval $[k-1, k]$. 
Now observe that the MEP with $A_{k\ell} = D_k ^{\ell-1}$ is definite with probability $1$ as a Vandermonde matrix is invertible.
The condition number of Vandermonde matrices increases rapidly with the number of nodes, especially when the modulus of the nodes are larger than one. Similarly, the condition number of the matrices appearing in the Newton iteration increases which possibly leads to slower convergence and less accuracy of the solution. To mitigate this problem slightly, we use Laguerre polynomials rather than monomials. The condition number of matrices with entries determined by Laguerre polynomials at different nodes still increases quickly, but not as rapidly as it is the case for Vandermonde matrices.
This results in better conditioned linear systems arising when solving multiparameter problems by Newton's method, but they do still deteriorate rather quickly with larger values of~$m$.
We thus construct the other matrices as the diagonal matrices  
\begin{equation}\label{eq: Laguerre random matrices}
    A_{k\ell} = L_{\ell- 1}(D_k)    
\end{equation}
where $L_\ell$ is the $\ell$-th Laguerre polynomial. This results in a definite multiparameter problem with probability $1$ as well, since it is just a linear transformation in the eigenvalues of the MEP with monomial coefficient matrices.

These multiparameter eigenvalue problems are increasingly bad conditioned for large $m$. To also see the case of large $m$ and well conditioned problems, we generate the matrices $A_{k\ell}$ for $\ell =1,\ldots, m$ as 
\begin{equation}\label{eq: uniformly well conditioned matrices}
    A_{k\ell} = Q_{k\ell}   D_{k\ell} Q_{k\ell}^\T + \delta_{k\ell} I_n,
\end{equation}
where $D_{k\ell}$ are diagonal matrices with entries uniformly distributed in the interval $[-\frac{1}{2m},\frac{1}{2m}]$, $Q_{k\ell}$ are randomly generated orthogonal matrices, $I_n$ is an $n$ dimensional identity matrix 
and $\delta_{k\ell}$ is the Kronecker delta.
This leads to uniformly well conditioned linear systems.

\subsubsection{Randomly generated three-parameter eigenvalue problems}
\begin{figure}
    \centering
    \begin{subfigure}
    {0.496\textwidth}
     \begin{tikzpicture}[scale=0.67]
    \begin{loglogaxis}[
        xtick={2,4,8,16},
    xticklabels={$2^1$,$2^2$,$2^3$,$2^4$},
    width=1.3\textwidth,
      height=1.2\textwidth,
    grid=major,xlabel={size of matrices $n$},ylabel={ time in seconds},xmin=2, xmax=24,ymin = 1e-4,ymax = 1600,
    legend style={at={(0.97,0.03)},anchor=south east}] 
    \addplot[samples=3,domain=2:24,blue, dashed] {0.4e-5*x^5};
    \addlegendentry{Rate $n^5$ }
    \addplot[samples=3,domain=2:16,red, dashed] {1e-8*x^9};
    \addlegendentry{Rate $n^9$}
    \foreach \i in {2,...,16}
    {
        \addplot[only marks,solid,color=blue,mark=triangle, mark size = 3pt ] %
        table[x=n,y=timeN,col sep=comma]{Data/RandomTestThree/errorTimeTableRandom\i.csv};
        \addplot[only marks,solid,color=red,mark=square, mark size = 2pt] %
        table[x=n,y=timeC,col sep=comma]{Data/RandomTestThree/errorTimeTableRandom\i.csv};
    }
    \foreach \i in {17,...,24}
    {
        \addplot[only marks,solid,color=blue,mark=triangle, mark size = 3pt ] %
        table[x=n,y=timeN,col sep=comma]{Data/RandomTestThree/errorTimeTableRandom\i.csv};
    }
    \end{loglogaxis}
    \end{tikzpicture}   
    \end{subfigure}
    \begin{subfigure}{0.496\textwidth}
        \begin{tikzpicture}[scale=0.67]
            \begin{semilogyaxis}[
            width=1.3\textwidth,
              height=1.2\textwidth,
            grid=major,xlabel={size of matrices $n$},ylabel={ error},xmin=2, xmax=24, 
            legend style={at={(0.97,0.5)},anchor=east}] 
            \addplot[only marks,solid,color=blue,mark=triangle, mark size = 3pt ] %
            table[x=n,y=errN,col sep=comma]{Data/RandomTestThree/errorTimeTableRandom2.csv};
            \addlegendentry{\Cref{alg:NewtonGlobalized}}
            \addplot[only marks,solid,color=red,mark=square, mark size = 2pt] %
            table[x=n,y=errC,col sep=comma]{Data/RandomTestThree/errorTimeTableRandom2.csv};
            \addlegendentry{{\textsc{threepareig}}}
            \foreach \i in {3,...,16}{
                \addplot[only marks,solid,color=blue,mark=triangle, mark size = 3pt ] %
            table[x=n,y=errN,col sep=comma]{Data/RandomTestThree/errorTimeTableRandom\i.csv};
            \addplot[only marks,solid,color=red,mark=square, mark size = 2pt] %
            table[x=n,y=errC,col sep=comma]{Data/RandomTestThree/errorTimeTableRandom\i.csv};
            }
            \foreach \i in {17,...,24}{
                \addplot[only marks,solid,color=blue,mark=triangle, mark size = 3pt ] %
            table[x=n,y=errN,col sep=comma]{Data/RandomTestThree/errorTimeTableRandom\i.csv};
            }
            \end{semilogyaxis}
            \end{tikzpicture} 
    \end{subfigure}
    \centering
    \begin{subfigure}
    {0.496\textwidth}
     \begin{tikzpicture}[scale=0.67]
    \begin{loglogaxis}[
        xtick={2,4,8,16},
    xticklabels={$2^1$,$2^2$,$2^3$,$2^4$},
    width=1.3\textwidth,
      height=1.2\textwidth,
    grid=major,xlabel={size of matrices $n$},ylabel={ time in seconds},xmin=2, xmax=24,ymin = 1e-4,ymax = 1600,
    legend style={at={(0.97,0.03)},anchor=south east}] 
    \addplot[samples=3,domain=2:24,blue, dashed] {0.2e-4*x^5};
    \addlegendentry{Rate $n^5$ }
    \addplot[samples=3,domain=2:16,red, dashed] {1e-8*x^9};
    \addlegendentry{Rate $n^9$}
    \foreach \i in {2,...,16}
    {
        \addplot[only marks,solid,color=blue,mark=triangle, mark size = 3pt ] %
        table[x=n,y=timeN,col sep=comma]{Data/RandomTestThree/errorTimeTableRandomWC\i.csv};
        \addplot[only marks,solid,color=red,mark=square, mark size = 2pt] %
        table[x=n,y=timeC,col sep=comma]{Data/RandomTestThree/errorTimeTableRandomWC\i.csv};
    }

    \foreach \i in {17,...,24}
    {
        \addplot[only marks,solid,color=blue,mark=triangle, mark size = 3pt ] %
        table[x=n,y=timeN,col sep=comma]{Data/RandomTestThree/errorTimeTableRandomWC\i.csv};
    }
    \end{loglogaxis}
    \end{tikzpicture}   
    \end{subfigure}
    \begin{subfigure}{0.496\textwidth}
        \begin{tikzpicture}[scale=0.67]
            \begin{semilogyaxis}[
            width=1.3\textwidth,
              height=1.2\textwidth,
            grid=major,xlabel={size of matrices $n$},ylabel={ error},xmin=2, xmax=24, 
            legend style={at={(0.97,0.5)},anchor=east}] 
            \addplot[only marks,solid,color=blue,mark=triangle, mark size = 3pt ] %
            table[x=n,y=errN,col sep=comma]{Data/RandomTestThree/errorTimeTableRandomWC2.csv};
            \addlegendentry{\Cref{alg:Newton}}
            \addplot[only marks,solid,color=red,mark=square, mark size = 2pt] %
            table[x=n,y=errC,col sep=comma]{Data/RandomTestThree/errorTimeTableRandomWC2.csv};
            \addlegendentry{{\textsc{threepareig}}}
            \foreach \i in {3,...,16}{
                \addplot[only marks,solid,color=blue,mark=triangle, mark size = 3pt ] %
            table[x=n,y=errN,col sep=comma]{Data/RandomTestThree/errorTimeTableRandomWC\i.csv};
            \addplot[only marks,solid,color=red,mark=square, mark size = 2pt] %
            table[x=n,y=errC,col sep=comma]{Data/RandomTestThree/errorTimeTableRandomWC\i.csv};
            }
            \foreach \i in {17,...,24}{
                \addplot[only marks,solid,color=blue,mark=triangle, mark size = 3pt ] %
            table[x=n,y=errN,col sep=comma]{Data/RandomTestThree/errorTimeTableRandomWC\i.csv};
            }
            \end{semilogyaxis}
            \end{tikzpicture}  
    \end{subfigure}
    \caption{Comparison of the performance of  \Cref{alg:Newton} or \Cref{alg:NewtonGlobalized} respectively and \textsc{threepareig} from~\cite{multipareig}. The multiparameter eigenvalue problems are generated randomly as described in \Cref{sec: randomly generated examples} with $n\times n$ matrices and varying size $n$. Here, all eigenvalues are computed.
    The first two graphs showcase the case of increasingly bad conditioned problems generated with Laguerre polynomials~\eqref{eq: Laguerre random matrices}, the second two graphs show the uniformly well conditioned problems~\eqref{eq: uniformly well conditioned matrices}.
    }\label{fig:expRandom3}
\end{figure}
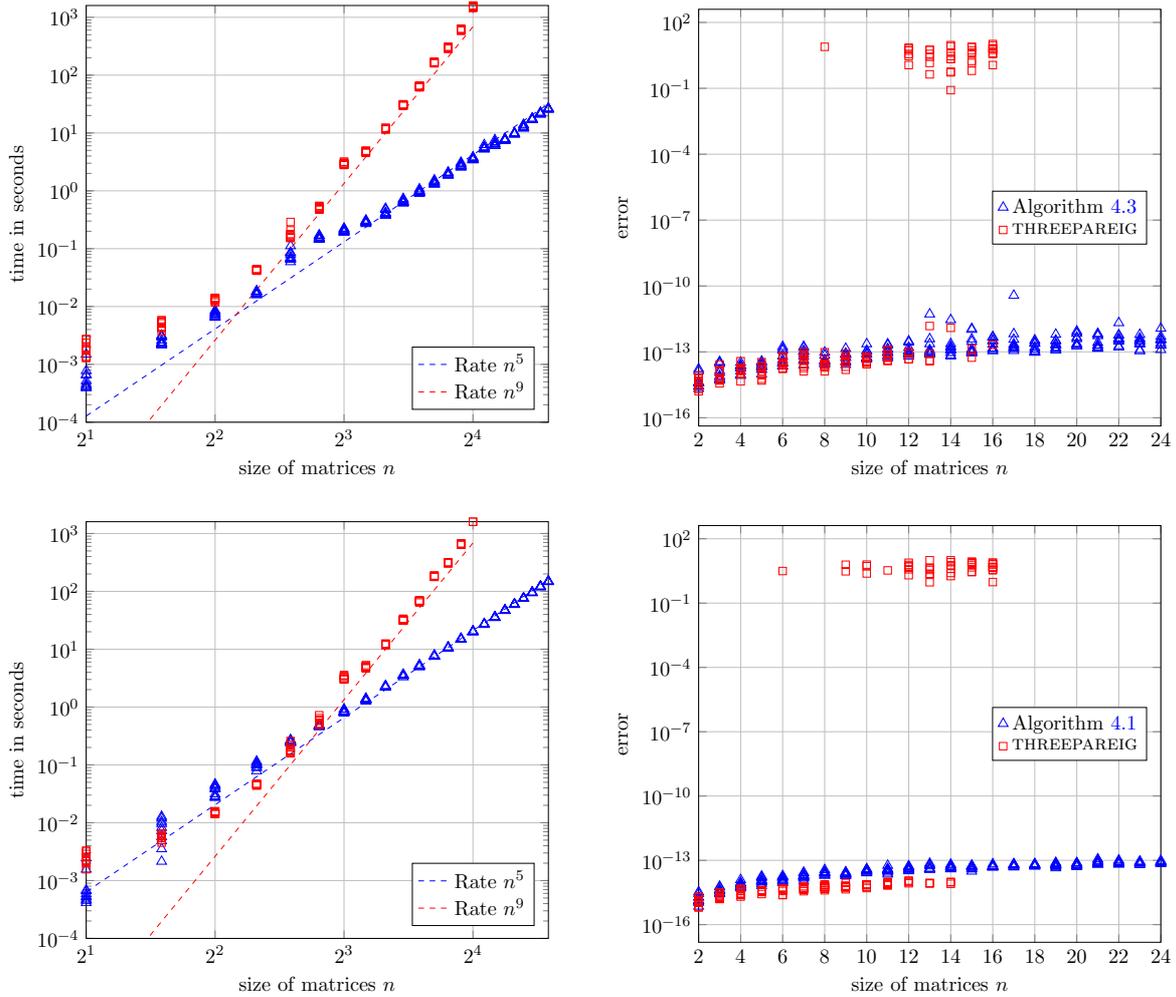 
As a next experiment, we generated $10$ different three-parameter eigenvalue problems with $n\times n$~matrices for each $n = 2,3, \ldots, 24$ and for both~\eqref{eq: Laguerre random matrices} and~\eqref{eq: uniformly well conditioned matrices} as described in the previous section. We noticed that in the case of~\eqref{eq: Laguerre random matrices}, we required a globalization strategy to reliably find all eigenvalues. Hence, we used \Cref{alg:Newton} for~\eqref{eq: uniformly well conditioned matrices} and \Cref{alg:NewtonGlobalized} for~\eqref{eq: Laguerre random matrices}
with $\tau = \frac{1}{2}$.
We used this method to compute all eigenvalues of the problem and compared the performance to \textsc{threepareig} from~\cite{multipareig}. The results are shown in \Cref{fig:expRandom3}.

The computation time of \Cref{alg:Newton} and \Cref{alg:NewtonGlobalized} are of order~$n^5$. We expect an order of $n^6$ as the method requires solving eigenvalue problems of~$n\times n$ matrices for~$n^3$ eigenvalue problems. However, for small $n$ the cost for solving an eigenvalue problem is rather of order~$n^2$ than~$n^3$. The computation time of \textsc{threepareig} is of order~$n^9$ as it requires solving generalized eigenvalue problems of~$n^3\times n^3$ problems. 

We do not use \textsc{threepareig} to solve problems with $n>16$ as the computation time is prohibitive. The resulting error in eigenvalues is slightly smaller but of the same order of magnitude for the ones computed with \Cref{alg:NewtonGlobalized}, when all eigenvalues are found. However, \textsc{threepareig} has problems finding all eigenvalues to satisfying precision while \Cref{alg:NewtonGlobalized} is able to find all eigenvalues. The same is true for \Cref{alg:Newton} in the case of randomly generated problems of the form~\eqref{eq: uniformly well conditioned matrices}, where the Newton method finds all eigenvalues but \textsc{threepareig} cannot. We want to note that in \cite[\examp 6.1]{HKP2024} a new version of  \textsc{threepareig}~\cite{multipareignew} was able to find all eigenvalues for the three parameter eigenvalue problems of the form described in~\eqref{eq: uniformly well conditioned matrices}.

\begin{figure}

    \centering
    \begin{subfigure}
    {0.496\textwidth}
     \begin{tikzpicture}[scale=0.67]
    \begin{semilogyaxis}[
    width=1.3\textwidth,
      height=1.2\textwidth,
    grid=major,xlabel={number of parameters $m$},ylabel={ time in seconds},xmin=2, xmax=15,ymin = 3*1e-5,ymax = 400,
    legend style={at={(0.97,0.03)},anchor=south east}] 
    
        \addplot[only marks,mark = triangle,color = blue, mark size = 3pt ] %
        table[x=m,y=time,col sep=comma]{Data/RandomTestVaryingM/testRandom2.csv};
        \addlegendentry{$n = 2$}
        \addplot[sharp plot, color = blue] %
        table[x=m,y expr = 1.5*(2^2*x+0.3*x^3)*2^x *1e-6,col sep=comma, forget plot]{Data/RandomTestVaryingM/testRandom2.csv};
    
        \addplot[only marks,mark = square,color = red, mark size = 3pt ] %
        table[x=m,y=time,col sep=comma]{Data/RandomTestVaryingM/testRandom3.csv};
        \addlegendentry{$n = 3$}
        \addplot[sharp plot, color = red] %
        table[x=m,y expr = 1.5*(3^2*x+0.3*x^3)*3^x *1e-6,col sep=comma, forget plot]{Data/RandomTestVaryingM/testRandom2.csv};
    
        \addplot[only marks,mark = diamond,color = olive, mark size = 3pt ] %
        table[x=m,y=time,col sep=comma]{Data/RandomTestVaryingM/testRandom4.csv};
        \addlegendentry{$n = 4$}
        \addplot[sharp plot, color = olive] %
        table[x=m,y expr = 1.5*(4^2*x+0.3*x^3)*4^x *1e-6,col sep=comma, forget plot]{Data/RandomTestVaryingM/testRandom2.csv};
    
        \addplot[only marks,mark = Mercedes star,color = purple, mark size = 3pt ] %
        table[x=m,y=time,col sep=comma]{Data/RandomTestVaryingM/testRandom5.csv};
        \addlegendentry{$n = 5$}
        \addplot[sharp plot, color = purple] %
        table[x=m,y expr = 1.5*(5^2*x+0.3*x^3)*5^x *1e-6,col sep=comma, forget plot]{Data/RandomTestVaryingM/testRandom2.csv};

        \addplot[only marks,mark = o,color = violet, mark size = 3pt ] %
        table[x=m,y=time,col sep=comma]{Data/RandomTestVaryingM/testRandom6.csv};
        \addlegendentry{$n = 6$}
        \addplot[sharp plot, color = violet] %
        table[x=m,y expr = 1.5*(6^2*x+0.3*x^3)*6^x *1e-6,col sep=comma, forget plot]{Data/RandomTestVaryingM/testRandom2.csv};
    \end{semilogyaxis}
    \end{tikzpicture}   
    
    \end{subfigure}
    \begin{subfigure}{0.496\textwidth}
    
        \begin{tikzpicture}[scale=0.67]
            
            \begin{semilogyaxis}[
            width=1.3\textwidth,
              height=1.2\textwidth,
            grid=major,xlabel={number of parameters $m$},ylabel={ error},xmin=2, xmax=15, 
            legend style={at={(0.97,0.5)},anchor=east}]

        \addplot[only marks,mark = triangle,color = blue, mark size = 3pt ] %
        table[x=m,y=error,col sep=comma]{Data/RandomTestVaryingM/testRandom2.csv};

        \addplot[only marks,mark = square,color = red, mark size = 3pt ] %
        table[x=m,y=error,col sep=comma]{Data/RandomTestVaryingM/testRandom3.csv};

        \addplot[only marks,mark = diamond,color = olive, mark size = 3pt ] %
        table[x=m,y=error,col sep=comma]{Data/RandomTestVaryingM/testRandom4.csv};

        \addplot[only marks,mark = Mercedes star,color = purple, mark size = 3pt ] %
        table[x=m,y=error,col sep=comma]{Data/RandomTestVaryingM/testRandom5.csv};

        \addplot[only marks,mark = o,color = violet, mark size = 3pt ] %
        table[x=m,y=error,col sep=comma]{Data/RandomTestVaryingM/testRandom6.csv};

            \end{semilogyaxis}
            \end{tikzpicture}

    \end{subfigure}
    \centering
\begin{subfigure}
{0.496\textwidth}
 \begin{tikzpicture}[scale=0.67]
\begin{semilogyaxis}[
width=1.3\textwidth,
  height=1.2\textwidth,
grid=major,xlabel={number of parameters $m$},ylabel={ time in seconds},xmin=2, xmax=15,ymin = 3*1e-5,ymax = 100,
legend style={at={(0.97,0.03)},anchor=south east}] 

    \addplot[only marks,mark = triangle,color = blue, mark size = 3pt ] %
    table[x=m,y=time,col sep=comma]{Data/RandomTestVaryingM/testRandomWC2.csv};
    \addlegendentry{$n = 2$}
    \addplot[sharp plot, color = blue] %
    table[x=m,y expr = (4*x+0.3*x^3)*2^x *1e-6,col sep=comma, forget plot]{Data/RandomTestVaryingM/testRandom2.csv};

    \addplot[only marks,mark = square,color = red, mark size = 3pt ] %
    table[x=m,y=time,col sep=comma]{Data/RandomTestVaryingM/testRandomWC3.csv};
    \addlegendentry{$n = 3$}
    \addplot[sharp plot, color = red] %
    table[x=m,y expr = (9*x+0.3*x^3)*3^x *1e-6,col sep=comma, forget plot]{Data/RandomTestVaryingM/testRandom2.csv};

    \addplot[only marks,mark = diamond,color = olive, mark size = 3pt ] %
    table[x=m,y=time,col sep=comma]{Data/RandomTestVaryingM/testRandomWC4.csv};
    \addlegendentry{$n = 4$}
    \addplot[sharp plot, color = olive] %
    table[x=m,y expr = (16*x+0.3*x^3)*4^x *1e-6,col sep=comma, forget plot]{Data/RandomTestVaryingM/testRandom2.csv};

    \addplot[only marks,mark = Mercedes star,color = purple, mark size = 3pt ] %
    table[x=m,y=time,col sep=comma]{Data/RandomTestVaryingM/testRandomWC5.csv};
    \addlegendentry{$n = 5$}
    \addplot[sharp plot, color = purple] %
    table[x=m,y expr = (25*x+0.3*x^3)*5^x *1e-6,col sep=comma, forget plot]{Data/RandomTestVaryingM/testRandom2.csv};

    \addplot[only marks,mark = o,color = violet, mark size = 3pt ] %
    table[x=m,y=time,col sep=comma]{Data/RandomTestVaryingM/testRandomWC6.csv};
    \addlegendentry{$n = 6$}
    \addplot[sharp plot, color = violet] %
    table[x=m,y expr = (36*x+0.3*x^3)*6^x *1e-6,col sep=comma, forget plot]{Data/RandomTestVaryingM/testRandom2.csv};
\end{semilogyaxis}
\end{tikzpicture}   

\end{subfigure}
\begin{subfigure}{0.496\textwidth}

    \begin{tikzpicture}[scale=0.67]
        
        \begin{semilogyaxis}[
        width=1.3\textwidth,
          height=1.2\textwidth,
        grid=major,xlabel={number of parameters $m$},ylabel={ error},xmin=2, xmax=15, 
        legend style={at={(0.97,0.5)},anchor=east}]

    \addplot[only marks,mark = triangle,color = blue, mark size = 3pt ] %
    table[x=m,y=error,col sep=comma]{Data/RandomTestVaryingM/testRandomWC2.csv};

    \addplot[only marks,mark = square,color = red, mark size = 3pt ] %
    table[x=m,y=error,col sep=comma]{Data/RandomTestVaryingM/testRandomWC3.csv};

    \addplot[only marks,mark = diamond,color = olive, mark size = 3pt ] %
    table[x=m,y=error,col sep=comma]{Data/RandomTestVaryingM/testRandomWC4.csv};

    \addplot[only marks,mark = Mercedes star,color = purple, mark size = 3pt ] %
    table[x=m,y=error,col sep=comma]{Data/RandomTestVaryingM/testRandomWC5.csv};

    \addplot[only marks,mark = o,color = violet, mark size = 3pt ] %
    table[x=m,y=error,col sep=comma]{Data/RandomTestVaryingM/testRandomWC6.csv};

        \end{semilogyaxis}
        \end{tikzpicture}

\end{subfigure}
\caption{Showcase of the performance of \Cref{alg:NewtonGlobalized} for $n\times n$ matrices and varying number of parameters $m$. The multiparameter eigenvalue problems were generated randomly as  described in \Cref{sec: randomly generated examples}. All eigenvalues were computed.  
The first two graphs show the case of increasingly bad conditioned problems generated with Laguerre polynomials~\eqref{eq: Laguerre random matrices}, the second two graphs showcase the uniformly well conditioned problems~\eqref{eq: uniformly well conditioned matrices}
As a comparison, we show the observed costs~$(mn^2+m^3)n^m$.}\label{fig:expRandomVaryingM}
    
\end{figure}

\subsubsection{Randomly generated multiparameter eigenvalue problems}
We generate $9$ multiparameter eigenvalue problems as described in \Cref{sec: randomly generated examples} for each combination of the following parameters:
\begin{center}
    \begin{tabular}{| c | c | c | c | c | c |}
        \hline
        $n  $ & $2$ &$3$ &$4$ &$5$ & $6$ \\
        $m  $ & $2,\dots, 15$ & $2,\dots, 10$ & $2,\dots, 8$ & $2,\dots, 7$ & $2,\dots, 6$\\
        \hline
    \end{tabular}        
\end{center}
We again aimed to compute all eigenvalues. The results are showcased in \Cref{fig:expRandomVaryingM}. In most cases, the method is able to find all eigenvalues, however the precision deteriorates with larger $m$. This is expected behavior, as the conditioning of the resulting linear systems also deteriorates. 

Here, for simplicity, we only used the globalized version. We note, that we implemented \Cref{alg:NewtonGlobalized} with the stopping criterion of reaching either an error of $10^{-11}$ or after reaching $40$ steps in the for-loop. In the case of randomly generated problems with~\eqref{eq: Laguerre random matrices} and for large $m$ the second stopping criterion is reached more often, leading at first to a deterioration of costs, and later to non convergence of some eigenvalues. This is not the case for the problems generated with~\eqref{eq: uniformly well conditioned matrices}. Here, the error does not deteriorate, and the costs behave as expected.

To test larger matrix sizes, we generate $9$ multiparameter eigenvalue problems for each $m = 2, \ldots, 15$ and~$n=25, 50, 100, 200, 400$, but this time we only compute one single eigenvalue with multiindex~$\mathbf 1 = (1,\ldots, 1)$. The results are shown in \Cref{fig:expRandomVaryingMSingle}. Again, the error deteriorates for larger~$m$ in the case of randomly generated problems  as expected which also leads to a visible deterioration of computation time. This starts noticeable for $m = 10$, when the error is larger than $10^{-11}$ more often than not, and thus the full $40$ steps of the for-loop are used.

For both experiments, the computational cost depends on the parameters $m$ and $n$ roughly as expected. For the values of $n$, the cost of solving an eigenvalue problem still scales like $n^2$ and $n^{2.5}$ respectively, rather than $n^3$. 
Therefore, we observe costs per eigenvalue $mn^2+m^3$ and $mn^{2.5}+m^3$ respectively, which is visible in both \Cref{fig:expRandomVaryingM} and \Cref{fig:expRandomVaryingMSingle}. The asymptotic costs for $n\to \infty$ should be  $mn^3+m^3$.

\begin{figure}
    \centering
\begin{subfigure}
{0.496\textwidth}
 \begin{tikzpicture}[scale=0.67]
\begin{loglogaxis}[
    xtick={2,4,8,16},
xticklabels={$2^1$,$2^2$,$2^3$,$2^4$},
width=1.3\textwidth,
  height=1.2\textwidth,
grid=major,xlabel={number of parameters $m$},ylabel={ time in seconds},xmin=2, xmax=15,ymin = 1e-5,ymax = 1600,
legend style={at={(0.97,0.03)},anchor=south east}] 

    \addplot[only marks,mark = triangle,color = blue, mark size = 3pt ] %
    table[x=m,y=time,col sep=comma]{Data/RandomTestVaryingMsingle/testRandomSingle25.csv};
    \addlegendentry{$n = 25$}
    \addplot[sharp plot, color = blue] %
    table[x=m,y expr = 7*(25^2*x+x^3) *1e-7,col sep=comma, forget plot]{Data/RandomTestVaryingM/testRandom2.csv};

    \addplot[only marks,mark = square,color = red, mark size = 3pt ] %
    table[x=m,y=time,col sep=comma]{Data/RandomTestVaryingMsingle/testRandomSingle50.csv};
    \addlegendentry{$n = 50$}
    \addplot[sharp plot, color = red] %
    table[x=m,y expr = 7*(50^2*x+x^3) *1e-7,col sep=comma, forget plot]{Data/RandomTestVaryingM/testRandom2.csv};

    \addplot[only marks,mark = diamond,color = olive, mark size = 3pt ] %
    table[x=m,y=time,col sep=comma]{Data/RandomTestVaryingMsingle/testRandomSingle100.csv};
    \addlegendentry{$n = 100$}
    \addplot[sharp plot, color = olive] %
    table[x=m,y expr = 7*(100^2*x+x^3) *1e-7,col sep=comma, forget plot]{Data/RandomTestVaryingM/testRandom2.csv};

    \addplot[only marks,mark = Mercedes star,color = purple, mark size = 3pt ] %
    table[x=m,y=time,col sep=comma]{Data/RandomTestVaryingMsingle/testRandomSingle200.csv};
    \addlegendentry{$n = 200$}
    \addplot[sharp plot, color = purple] %
    table[x=m,y expr = 7*(200^2*x+x^3) *1e-7,col sep=comma, forget plot]{Data/RandomTestVaryingM/testRandom2.csv};

    \addplot[only marks,mark = o,color = violet, mark size = 3pt ] %
    table[x=m,y=time,col sep=comma]{Data/RandomTestVaryingMsingle/testRandomSingle400.csv};
    \addlegendentry{$n = 400$}
    \addplot[sharp plot, color = violet] %
    table[x=m,y expr = 7*(400^2*x+x^3) *1e-7,col sep=comma, forget plot]{Data/RandomTestVaryingM/testRandom2.csv};

\end{loglogaxis}
\end{tikzpicture}   

\end{subfigure}
\begin{subfigure}{0.496\textwidth}

    \begin{tikzpicture}[scale=0.67]
        
        \begin{semilogyaxis}[
        width=1.3\textwidth,
          height=1.2\textwidth,
        grid=major,xlabel={number of parameters $m$},ylabel={ error},xmin=2, xmax=15, 
        legend style={at={(0.97,0.03)},anchor=south east}]

        \addplot[only marks,mark = triangle,color = blue, mark size = 3pt ] %
        table[x=m,y=error,col sep=comma]{Data/RandomTestVaryingMsingle/testRandomSingle25.csv};
        \addlegendentry{$n = 25$}

        \addplot[only marks,mark = square,color = red, mark size = 3pt ] %
        table[x=m,y=error,col sep=comma]{Data/RandomTestVaryingMsingle/testRandomSingle50.csv};
        \addlegendentry{$n = 50$}

        \addplot[only marks,mark = diamond,color = olive, mark size = 3pt ] %
        table[x=m,y=error,col sep=comma]{Data/RandomTestVaryingMsingle/testRandomSingle100.csv};
        \addlegendentry{$n = 100$}

        \addplot[only marks,mark = Mercedes star,color = purple, mark size = 3pt ] %
        table[x=m,y=error,col sep=comma]{Data/RandomTestVaryingMsingle/testRandomSingle200.csv};
        \addlegendentry{$n = 200$}

        \addplot[only marks,mark = o,color = violet, mark size = 3pt ] %
        table[x=m,y=error,col sep=comma]{Data/RandomTestVaryingMsingle/testRandomSingle400.csv};
        \addlegendentry{$n = 400$}

        \end{semilogyaxis}
        \end{tikzpicture}

\end{subfigure}
\centering
\begin{subfigure}
{0.496\textwidth}
 \begin{tikzpicture}[scale=0.67]
\begin{loglogaxis}[
    xtick={2,4,8,16},
xticklabels={$2^1$,$2^2$,$2^3$,$2^4$},
width=1.3\textwidth,
  height=1.2\textwidth,
grid=major,xlabel={number of parameters $m$},ylabel={ time in seconds},xmin=2, xmax=15,ymin = 1e-5,ymax = 1600,
legend style={at={(0.97,0.03)},anchor=south east}] 

    \addplot[only marks,mark = triangle,color = blue, mark size = 3pt ] %
    table[x=m,y=time,col sep=comma]{Data/RandomTestVaryingMsingle/testRandomSingleWC25.csv};
    \addlegendentry{$n = 25$}
    \addplot[sharp plot, color = blue] %
    table[x=m,y expr = 1.5*(25^2.5*x+x^3) *1e-7,col sep=comma, forget plot]{Data/RandomTestVaryingM/testRandom2.csv};

    \addplot[only marks,mark = square,color = red, mark size = 3pt ] %
    table[x=m,y=time,col sep=comma]{Data/RandomTestVaryingMsingle/testRandomSingleWC50.csv};
    \addlegendentry{$n = 50$}
    \addplot[sharp plot, color = red] %
    table[x=m,y expr = 1.5*(50^2.5*x+x^3) *1e-7,col sep=comma, forget plot]{Data/RandomTestVaryingM/testRandom2.csv};

    \addplot[only marks,mark = diamond,color = olive, mark size = 3pt ] %
    table[x=m,y=time,col sep=comma]{Data/RandomTestVaryingMsingle/testRandomSingleWC100.csv};
    \addlegendentry{$n = 100$}
    \addplot[sharp plot, color = olive] %
    table[x=m,y expr = 1.5*(100^2.5*x+x^3) *1e-7,col sep=comma, forget plot]{Data/RandomTestVaryingM/testRandom2.csv};

    \addplot[only marks,mark = Mercedes star,color = purple, mark size = 3pt ] %
    table[x=m,y=time,col sep=comma]{Data/RandomTestVaryingMsingle/testRandomSingleWC200.csv};
    \addlegendentry{$n = 200$}
    \addplot[sharp plot, color = purple] %
    table[x=m,y expr = 1.5*(200^2.5*x+x^3) *1e-7,col sep=comma, forget plot]{Data/RandomTestVaryingM/testRandom2.csv};

    \addplot[only marks,mark = o,color = violet, mark size = 3pt ] %
    table[x=m,y=time,col sep=comma]{Data/RandomTestVaryingMsingle/testRandomSingleWC400.csv};
    \addlegendentry{$n = 400$}
    \addplot[sharp plot, color = violet] %
    table[x=m,y expr = 1.5*(400^2.5*x+x^3) *1e-7,col sep=comma, forget plot]{Data/RandomTestVaryingM/testRandom2.csv};

\end{loglogaxis}
\end{tikzpicture}   

\end{subfigure}
\begin{subfigure}{0.496\textwidth}

    \begin{tikzpicture}[scale=0.67]
        
        \begin{semilogyaxis}[
        width=1.3\textwidth,
          height=1.2\textwidth,
        grid=major,xlabel={number of parameters $m$},ylabel={ error},xmin=2, xmax=15, 
        legend style={at={(0.97,0.03)},anchor=south east}]

        \addplot[only marks,mark = triangle,color = blue, mark size = 3pt ] %
        table[x=m,y=error,col sep=comma]{Data/RandomTestVaryingMsingle/testRandomSingleWC25.csv};
        \addlegendentry{$n = 25$}

        \addplot[only marks,mark = square,color = red, mark size = 3pt ] %
        table[x=m,y=error,col sep=comma]{Data/RandomTestVaryingMsingle/testRandomSingleWC50.csv};
        \addlegendentry{$n = 50$}

        \addplot[only marks,mark = diamond,color = olive, mark size = 3pt ] %
        table[x=m,y=error,col sep=comma]{Data/RandomTestVaryingMsingle/testRandomSingleWC100.csv};
        \addlegendentry{$n = 100$}

        \addplot[only marks,mark = Mercedes star,color = purple, mark size = 3pt ] %
        table[x=m,y=error,col sep=comma]{Data/RandomTestVaryingMsingle/testRandomSingleWC200.csv};
        \addlegendentry{$n = 200$}

        \addplot[only marks,mark = o,color = violet, mark size = 3pt ] %
        table[x=m,y=error,col sep=comma]{Data/RandomTestVaryingMsingle/testRandomSingleWC400.csv};
        \addlegendentry{$n = 400$}

        \end{semilogyaxis}
        \end{tikzpicture}

\end{subfigure}
\caption{Showcase of the performance of \Cref{alg:NewtonGlobalized} for $n\times n$ matrices and varying number of parameters $m$. The multiparameter eigenvalue problems were generated randomly as  described in \Cref{sec: randomly generated examples}. 
The first two graphs show the case of increasingly bad conditioned problems generated with Laguerre polynomials~\eqref{eq: Laguerre random matrices}, the second two graphs show the uniformly well conditioned problems~\eqref{eq: uniformly well conditioned matrices}
As a comparison, we show the observed rates for the costs~$mn^2+m^3$ and~$mn^{2.5}+m^3$, respectively.}\label{fig:expRandomVaryingMSingle}

\end{figure}
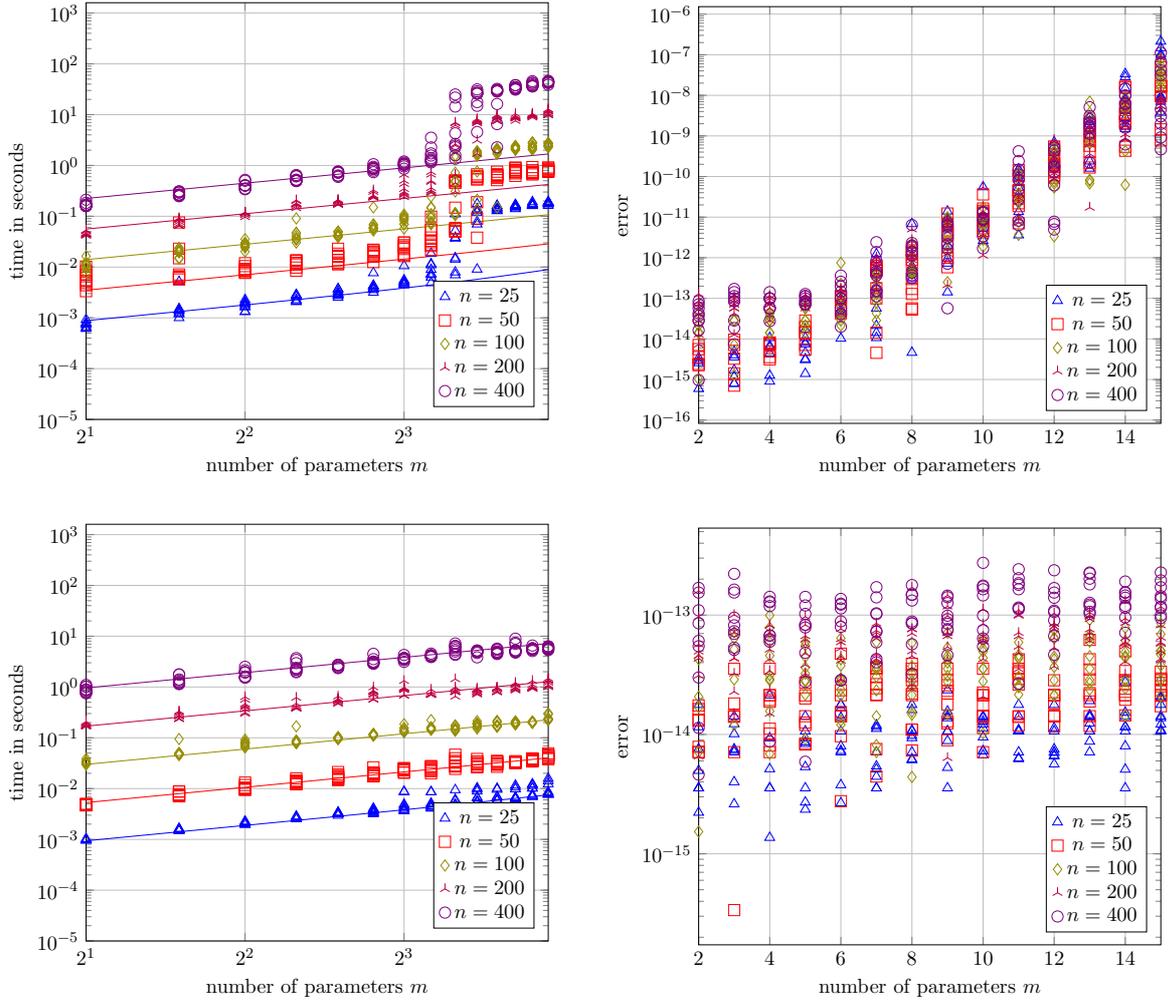

\subsection{Locally definite problem}\label{sec:localdefiniteexperiment}

At last, we apply our methods to a homogeneous  MEP of the form~\eqref{eq:HomMEP} that is locally definite but not definite. We are only aware of the following example from~\cite[\ch 1.5]{Volkmer88}. Let
{\allowdisplaybreaks
\begin{align*}
    A_{10}=A_{21}=A_{32}&=\begin{pmatrix}
      1 \\
      & 5\\
      && 1\\
      &&& 1
    \end{pmatrix},&
    A_{11}=A_{20}=A_{33}&=\begin{pmatrix}
      1 \\
      & 1\\
      && 5\\
      &&& 1
    \end{pmatrix},
    \\
    A_{12}=A_{23}=A_{30}&=\begin{pmatrix}
      5 \\
      & 1\\
      && 1\\
      &&& 1
    \end{pmatrix},&
    A_{13}=A_{22}=A_{31}&=-\begin{pmatrix}
      1 \\
      & 1\\
      && 1\\
      &&& 5
    \end{pmatrix}.
\end{align*}}
The resulting MEP is locally definite which can be seen by applying \Cref{lm:localdefinite}. Indeed, for every sign $\sigma \in\{-1,1\}^3$ one can choose $\alpha$ as one of the vectors $\pm e_i\in \R^4$. It is however not definite. To see this, note that 
\[
\begin{aligned}
\lambda_1&=\frac{1}{\sqrt{12}}(-1, -3, 1, 1),&\lambda_2&=\frac{1}{\sqrt{12}}(-1 ,1 ,1 ,-3 ),
\\
\lambda_3&=\frac{1}{\sqrt{12}}(-1, 1 ,-3, 1),& \lambda_4&=\frac{1}{\sqrt{12}}(3, 1 ,1 ,1)
\end{aligned}
\]
are eigenvalues in $\mathcal P^+$ and $\lambda_1+\lambda_2+\lambda_3+\lambda_4=0$. Hence, there is no $\mu$ such that $\mu^\T \lambda>0$ for all $\lambda\in\mathcal P^+$ and the MEP cannot be definite.
The eigenvectors are given by the coordinate vectors $e_{i_1}\otimes e_{i_2} \otimes e_{i_3} $. We used \Cref{alg:NewtonHom} to find all eigenvalues. For this problem, the eigenvalues were found up to machine precision after two iterations in the algorithm, as the problem essentially becomes solving a linear system, as all matrices are diagonal,
and therefore commute. After performing a common congruence transform, that is, replacing the matrices $A_{k\ell}$ by $B^{}_k A^{}_{k\ell} B^\He_k$ with nonorthogonal square matrices $B_k$, the matrices no longer commute but the eigenvalues of the MEP are unchanged. Again, \Cref{alg:NewtonHom} finds all eigenvalues but now requires more iterations.

We also perturbed the involved matrices slightly, and the algorithm was still able to find all eigenvalues. This indicates, that the simple adaptation \Cref{alg:NewtonHom} of \Cref{alg:Newton} is applicable to problems that are locally but not globally definite.

\section{Conclusion and outlook}

We have analyzed a Newton method for definite multiparameter eigenvalue problems, that computes eigenvalues of a given multiindex. We have shown local quadratic convergence
and global convergence for eigenvalues with extreme properties. In numerical experiments we demonstrate that the method can reliably compute any eigenvalue possibly when using a globalization strategy. 
The complexity per eigenvalue does not exceed the complexity of solving $m$ ordinary eigenvalue problems together with the costs of solving a linear system of $m$ equations.
We are therefore able to find some eigenvalues of multiparameter problems with comparably large $m$, where other methods based on the matrices~$\Delta_m$ in~\eqref{eq:deltaMatrix} fail due to complexity.

The search of eigenvalues by its multiindex can have more advantages. 
If the multiparameter eigenvalue problem comes from a multiparameter Sturm-Liouville
problem, it is possible to choose an adaptive discretization for different eigenvalues. 
This can possibly be combined with exterior error estimation, to further reduce the costs. 
This would be of particular interest in the case of localized eigenfunctions as it is for example observed in~\cite{ALSW_whispering_2014}. An adaptive discretization strategy would also lead to a very good initialization of forthcoming iterations.
Once in the proximity of an eigenvalue, one could also switch to matrix free methods. These can often, depending on the discretization, be applied with costs of order $\mathcal O(n)$ or $\mathcal O(n\log n)$. 

Another interesting topic of further research is concerned with the properties of multiparameter eigenvalue problems that are not quite definite but in some sense close. When a generalized eigenvalue problem is almost definite, that is, there is a linear combination of the involved matrices $\lambda A+\mu B$ that has only a few non-positive eigenvalues, one can show with the help of inertia laws that most of the eigenvalues are real~\cite{NN2019}.
Similarly, when a multiparameter problem is almost definite, many eigenvalues have a multiindex in the sense of \Cref{def:index}. These can again be targeted by a Newton's method in the same way. This will have additional challenges such as unknown existence of eigenvalues for multiindices and possibly non-uniqueness.

\subsection*{Acknowledgements}
I would like
to thank the anonymous referees for their helpful comments and suggestions, and Bor Plestenjak for providing me with a copy of~\cite{Bohte1982}.
The work of H.E.~was funded by Deutsche Forschungs\-gemeinschaft (DFG, German Research Foundation) – Projektnummer 501389786.

\bibliographystyle{amsplain}

\bibliography{main}

\end{document}